\documentclass[11pt]{article}

\usepackage[utf8]{inputenc}
\usepackage[T1]{fontenc}
\usepackage{amsmath,amssymb,amsthm}
\usepackage{geometry}
\usepackage{hyperref}

\newtheorem{theorem}{Theorem}[section]
\newtheorem{lemma}[theorem]{Lemma}
\newtheorem{proposition}[theorem]{Proposition}
\newtheorem{corollary}[theorem]{Corollary}
\newtheorem{definition}[theorem]{Definition}
\newtheorem{fact}[theorem]{Fact}
\theoremstyle{remark}
\newtheorem{remark}[theorem]{Remark}
\newtheorem{example}[theorem]{Example}

\newcommand{\NN}{\mathbb{N}}
\newcommand{\CC}{\mathbb{C}}
\newcommand{\QQ}{\mathbb{Q}}
\newcommand{\ZZ}{\mathbb{Z}}

\title{Differential Operators on $G(r,n)$-Invariant Functions}
\author{Ferdinand Kafando\thanks{Laboratoire de Sciences et Technologies (LaST), Université Thomas SANKARA. \texttt{kafandoferdinand04@gmail.com}} \and Jean Kaboré\thanks{Laboratoire de Sciences et Technologies (LaST), Université Thomas SANKARA. \texttt{kaborejean775075@gmail.com}} \and Ibrahim Nonkané\thanks{Département d'Économie et Mathématiques Appliquées, IUFIC, Université Thomas SANKARA. \texttt{ibrahim.nonkane@uts.bf}}}
\date{\today}

\begin{document}
\maketitle
\tableofcontents

\begin{abstract}
We generalize known results on normalized symmetric coordinates and their dual differential operators, established for the symmetric group, to the complex monomial reflection group $G(r,n):=\mu_r\wr S_n$ (with $G(1,n)=S_n$). Our central tool is a \emph{transfer principle}, which we prove in detail: the substitution $y_i=x_i^r$ identifies the invariant ring of $G(r,n)$ with that of $S_n$, and carries the corresponding operators and coordinates, term by term, into explicit objects in the original variables $x_i$ (rational for the operators, polynomial for the coordinates). From this we deduce the $G(r,n)$-analogues of the known results for $S_n$: a Weyl algebra structure localized at the discriminant of $G(r,n)$, and existence and uniqueness of the dual coordinates $U_k$, the latter following readily from the transfer principle itself. The Leibniz rule for the symmetrized operator is the one exception, since it does not follow directly from the transfer; we give a direct, self-contained proof of it instead. For the main theorem we further give a second, independent proof, which exhibits the underlying linear system as triangular. We then turn to the \emph{total diagonal}, whose preimage splits into $r^{n-1}$ lines permuted transitively by the group. There, unlike the transferred operators $\Delta_i$, the \emph{raw} derivatives $\partial_{x_i}$ exhibit a phenomenon specific to $r\ge2$, which we describe in full via a Faà di Bruno-type structure formula, with an explicit closed formula for its constants. We likewise resolve the degeneracy at an isolated point $x_i=0$ completely: the operator $\Delta_i$ extends holomorphically there, with $\Delta_i\phi=\partial_{i}^r\phi /r!$, from which we deduce a partial analogue of the tangent space to the GIT quotient $\CC^n/G(r,n)$. Only the case of several coordinates vanishing simultaneously remains open, and we delineate it precisely. All proofs are given in detail.
\end{abstract}

\noindent\textbf{Keywords:} complex reflection groups; monomial group $G(r,n)=\mu_r\wr S_n$; symmetric functions; invariant differential operators; Weyl algebra; invariant theory; GIT quotient; discriminant; Faà di Bruno formula; total diagonal.

\noindent\textbf{2020 AMS Classification:} 13A50, 20F55, 16S32, 05E05, 14L24.

\section{Introduction}\label{sec:intro}

The differentiation of elementary symmetric functions, and of the dual coordinates naturally associated with them, is a classical theme. It sits at the interface of invariant theory, representation theory, and Calogero--Moser systems. Given the normalized elementary symmetric functions in $n$ variables $y_1,\dots,y_n$, Zemel \cite{Ze} constructs a dual family $\{u_k\}_{1\le k\le n}$ of $S_n$-invariant coordinates for the symmetric group $S_n$. These are characterized by the action of symmetrized differential operators $D_d$ satisfying $D_du_k=\delta_{d,k}$. This duality makes $\{u_k\}$ and $\{D_d\}$ the generators of a rank-$n$ Weyl algebra, localized at the usual discriminant $\prod_{i<j}(y_i-y_j)$. It also gives an explicit description of the tangent space to the GIT quotient $\CC^n/S_n$, and a fine analysis of how these coordinates and their derivatives behave at coincidence points, both on the total diagonal and at partial coincidences.

This note generalizes the whole of Section~1 of \cite{Ze}, together with part of its Sections~2 and~3, to the complex monomial reflection group
\[
G(r,n) \;:=\; \mu_r\wr S_n \;=\; \mu_r^n\rtimes S_n,
\]
denoted $G(r,1,n)$ in the Shephard--Todd classification \cite{ST}. This group contains $S_n=G(1,n)$ as a special case, at $r=1$, and for $r\ge2$ covers an infinite family of imprimitive reflection groups occupying a central place in the classification of finite complex reflection groups \cite{ST,Ch}. Our generalization turns on a \emph{transfer principle} (Section~\ref{sec:transfert}): the substitution $y_i=x_i^r$ identifies the invariant ring $\CC[x]^{G(r,n)}$ with $\CC[y]^{S_n}$ (Proposition~\ref{prop:invariants}), and carries any operator identity of \cite{Ze} over, term by term, into a corresponding identity in the original variables $x_i$, typically via the substitutions $\partial_{y_i}\mapsto\Delta_i:=\frac{1}{rx_i^{r-1}}\partial_{x_i}$ and $y_i-y_j\mapsto x_i^r-x_j^r$. This holds for any operator identity expressed in a class of operators $\mathcal L$ closed under sums, products by rational functions, transpositions, and compositions.

This principle does not, however, suffice on its own to transfer every result of \cite{Ze} outright. The Leibniz rule for the symmetrized operator $\mathcal D_d$ needs its own direct argument, independent of \cite{Ze}, which we give in full. The existence-and-uniqueness theorem for the dual coordinates $U_k$, by contrast, follows quickly from the transfer principle together with Zemel's own existence-and-uniqueness result for $S_n$. We give a second, fully self-contained proof of it anyway, since it exhibits the underlying linear system as triangular, which is of independent interest.

Passing from $S_n$ to $G(r,n)$ brings out an entirely new phenomenon, specific to the reflections $x_i\mapsto\zeta x_i$ ($\zeta\in\mu_r$) and absent from the case $r=1$: the ordinary derivative $\partial_{x_i}$, unlike the transferred operator $\Delta_i$, does not behave simply under iteration, and degenerates on the hyperplane $x_i=0$. We give a complete analysis of this. First comes a Faà di Bruno-type structure formula for the iterated raw derivatives, with an explicit closed formula for its constants. Next, their behavior on the total diagonal, whose preimage splits, for $G(r,n)$, into $r^{n-1}$ lines permuted transitively by the group. Finally, the singularity at the origin of an isolated coordinate is fully resolved, with an application to a partial analogue of the tangent space to the GIT quotient $\CC^n/G(r,n)$.

Independently of this, the dual generators $\{U_k\}$ and $\{\mathcal D_d\}$ generate a Weyl algebra localized at the discriminant proper to $G(r,n)$, generalizing the localization result established for $S_n$ in \cite{N}. That some such Weyl-algebra structure must exist in the abstract, for any complex reflection group acting on a polynomial ring, is the content of the Sheppard--Todd--Chevalley theorem, in its differential-operator form: $D(R^G)$ is a Weyl algebra exactly when $G$ is a reflection group (see \cite{Tr}, Theorem~5.3, for an accessible account); our contribution, for $G(r,n)$, is to make this structure fully explicit, with generators given in closed form. An analogous Weyl-algebra-localization phenomenon has been established, with a different set of generators, for the real reflection group of type $B_n$ in \cite{NB}. The group $G(r,n)$ has likewise already been studied from the point of view of $\mathcal D$-modules in \cite{NL}, which decomposes the direct image $\pi_+(\mathcal O_X)$ by means of higher Specht polynomials. Our own approach is complementary to it: we work with the explicit generators $\{U_k\}$ and $\{\mathcal D_d\}$ of the invariant Weyl algebra, rather than with this $\mathcal D$-module decomposition.

The term \emph{symmetric differential operator} is also used, in a quite different sense, in complex analysis and geometric function theory. There it designates operators built from $q$-difference or conformable-fractional derivatives, used to define subclasses of univalent or meromorphically multivalent functions on the unit disc (see, e.g., \cite{IDarus,IAldawish,IEO}). That notion of symmetry refers to a functional-equation symmetry of the operator itself, typically $z\mapsto -z$ or a $q$-analogue thereof, acting on a single complex variable. It is unrelated to the invariant-theoretic symmetry studied here, under a finite reflection group acting on several variables $x_1,\dots,x_n$. We mention this literature only to avoid any terminological ambiguity.

The article is organized as follows. Sections~\ref{sec:rappels} and~\ref{sec:transfert} recall the results of \cite{Ze} that we use, and establish the transfer principle. The following sections develop, for $G(r,n)$, the generators $\mathcal D_I$, the divided-difference relations, the Leibniz rule, the main theorem, and the Weyl algebra structure at the discriminant. They then illustrate all of this on the explicit examples $n=2$ and $n=3$. Section~\ref{sec:diagonale} studies the total diagonal of $G(r,n)$, and the raw derivatives that degenerate there. Section~\ref{sec:trois-points} establishes a closed formula for the structure constants, resolves the degeneracy at the origin of an isolated coordinate, and deduces from it the partial analogue of the tangent space to the GIT quotient. It also delimits precisely what remains open: the simultaneous coincidence of several coordinates at the origin. All proofs are given in detail.

To answer precisely the natural question of what, exactly, is new in passing from $S_n$ to $G(r,n)$, we distinguish four kinds of statements in what follows.

First, the transfer principle itself (Section~\ref{sec:transfert}) is genuinely new. It is a general structural device, with no counterpart in \cite{Ze}, that makes every subsequent transfer possible.

Second, once established, the transfer principle reduces a number of statements to essentially formal consequences of their $S_n$ counterparts: the generators $\mathcal D_I$ and the divided-difference relations of Section~\ref{sec:generators}, the action of $\mathcal D_d$ on the generators $E_h$, the existence and uniqueness of the dual coordinates $U_k$ themselves (Theorem~\ref{thm:main}, via its short first proof), and the vanishing of $U_k$ and of its $\Delta$-derivatives along the total diagonal (the first part of Section~\ref{sec:diagonale}, before the raw derivatives are introduced).

Third, other results demand an argument independent of the transfer principle, even though they run structurally parallel to their $S_n$ counterparts: the $\mu_r^n$-invariance step of Proposition~\ref{prop:invariants}, the Leibniz rule for $\mathcal D_d$ (Proposition~\ref{prop:leibniz}), and the classification of reflections and the discriminant of $G(r,n)$ (Lemma~\ref{lem:disc}). The main theorem admits a second, independent proof of this kind too, given alongside its short transfer-based one for the triangular structure it reveals.

Fourth, several phenomena have no analogue whatsoever for $S_n$, and are trivial, or vacuous, when $r=1$: the $\mu_r^n$-invariance of $\Delta_i$ itself, hence of $\mathcal D_d$ (Proposition~\ref{prop:prop6}), needed for $\mathcal D_d$ to be $G(r,n)$-symmetric, since $S_n$ has no toric factor to check invariance under; the decomposition of the preimage of the total diagonal into $r^{n-1}$ lines (Section~\ref{sec:diagonale}); the whole analysis of the raw derivatives $\partial_{x_i}$, meaning the structure formula, the closed-form constants, and Theorem~\ref{thm:16brut}; and the resolution of the singularity of $\Delta_i$ at an isolated coordinate $x_i=0$ (Section~\ref{sec:trois-points}).

We stress that this last section generalizes only the case of a \emph{single} vanishing coordinate. The case of several coordinates vanishing simultaneously would require the full strength of the coincidence-point analysis of \cite[\S3]{Ze}. It is left open, and stated precisely at the end of Section~\ref{sec:trois-points}.

\section{The group $G(r,n)$ and its ring of invariants}

Before turning to the operators announced in the introduction, we fix the group $G(r,n)$ itself and its ring of polynomial invariants, which everything else in this note builds on. Let $n\ge 1$ and $r\ge 1$ be two integers. Write $\NN_n=\{1,\dots,n\}$ and
\[
G(r,n) \;:=\; \mu_r \wr S_n \;=\; \mu_r^n \rtimes S_n,
\]
the complex reflection group (denoted $G(r,1,n)$ in the Shephard--Todd classification) acting on $\CC^n$ by
\[
(\zeta_1,\dots,\zeta_n;\sigma)\cdot(x_1,\dots,x_n) = (\zeta_1 x_{\sigma^{-1}(1)},\dots,\zeta_n x_{\sigma^{-1}(n)}), \qquad \zeta_i\in\mu_r,\ \sigma\in S_n.
\]
We have $|G(r,n)|=r^n n!$, and $G(1,n)=S_n$, the case treated in full in \cite{Ze}. Throughout, a function $\phi$, defined on a $G(r,n)$-stable subset of $\CC^n$, is called \emph{$G(r,n)$-invariant} if $\phi(g\cdot x)=\phi(x)$ for every $g\in G(r,n)$ and every $x$ in the domain, with the action $g\cdot x$ as just defined. Equivalently, $\phi$ is fixed by the (left) action $(g\cdot\phi)(x):=\phi(g^{-1}\cdot x)$ of $G(r,n)$ on functions, i.e.\ $g\cdot\phi=\phi$ for every $g$. The two formulations coincide because $g$ ranges over the entire group: $g\cdot x=x$ for all $x,g$ is equivalent to $g^{-1}\cdot x=x$ for all $x,g$. All functions considered are holomorphic; we work on the dense open set, which we call the \emph{regular domain},
\[
\Omega := \{x\in\CC^n \mid x_i\neq 0 \text{ for all } i\},
\]
the complement of the union of the reflecting hyperplanes of type $x_i=0$ (see Lemma~\ref{lem:disc}): it is on $\Omega$ that every $\Delta_i$ is defined by its original formula $\Delta_i=\frac{1}{rx_i^{r-1}}\partial_{x_i}$, without ambiguity. Sections~\ref{sec:trois-points} and~\ref{sec:diagonale} will also need to work at points \emph{outside} $\Omega$, where finitely many coordinates vanish; we call this larger set the \emph{extended domain}
\[
\widetilde\Omega \;:=\; \Omega\,\cup\,\{x\in\CC^n : \text{exactly one } x_i=0\} \;=\; \CC^n\setminus\{x: x_i=x_j=0 \text{ for some } i\ne j\},
\]
We reserve $\widetilde\Omega$, throughout, for the set on which the \emph{extended} operator $\Delta_i$ of Theorem~\ref{thm:origine} is defined, holomorphic across an isolated zero $x_i=0$; this is as opposed to the regular domain $\Omega$, on which the original formula for $\Delta_i$ already suffices. We stress that $\widetilde\Omega$ does not cover points where two or more coordinates vanish simultaneously (the origin of $\mathcal T$ in particular), consistently with the scope delimited in Section~\ref{sec:trois-points}.

\begin{proposition}\label{prop:invariants}
The ring of invariants $\CC[x_1,\dots,x_n]^{G(r,n)}$ is the polynomial ring
\[
\CC[x]^{G(r,n)} = \CC[E_1,\dots,E_n], \qquad E_h(x) := e_h(x_1^r,\dots,x_n^r) \quad (1\le h\le n),
\]
where $e_h$ denotes the $h$-th elementary symmetric function. The polynomial $E_h$ is homogeneous of degree $rh$.
\end{proposition}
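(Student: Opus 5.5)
The plan is to take invariants in two stages, first under the normal subgroup $\mu_r^n$ and then under the quotient $S_n$. We use the fundamental theorem of symmetric polynomials, applied in the variables $y_i=x_i^r$.

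\emph{Step 1 ($\mu_r^n$-invariants).} We show $\CC[x]^{\mu_r^n}=\CC[x_1^r,\dots,x_n^r]$. The group $\mu_r^n$ acts diagonally, so each monomial $x^a=x_1^{a_1}\cdots x_n^{a_n}$ is an eigenvector. Under $(\zeta_1,\dots,\zeta_n;\mathrm{id})$ it is multiplied by $\prod_i\zeta_i^{a_i}$ (or its inverse, depending on convention). Since the action preserves the monomial basis up to scalars, a polynomial is invariant if and only if each of its monomials is invariant. The monomial $x^a$ is invariant exactly when $\zeta^{a_i}=1$ for all $\zeta\in\mu_r$ and all $i$; taking $\zeta$ a primitive $r$-th root of unity in a single slot, this means $r\mid a_i$ for every $i$. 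Hence the invariants are spanned by the monomials in $x_1^r,\dots,x_n^r$. This is the only genuinely $r$-dependent step and, though elementary, it is the point to handle carefully. One must check that the argument is monomial-by-monomial, which is valid because distinct characters of $\mu_r^n$ give linearly independent eigenspaces.

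\emph{Step 2 ($S_n$-invariants).} The invariant ring is $\CC[x]^{G(r,n)}=(\CC[x]^{\mu_r^n})^{S_n}$, since $G(r,n)=\mu_r^n\rtimes S_n$ is generated by $\mu_r^n$ and $S_n$. The subring $\CC[x_1^r,\dots,x_n^r]$ is $S_n$-stable. The elements $y_i:=x_i^r$ are algebraically independent, since distinct monomials in the $y_i$ are distinct monomials in the $x_i$. Therefore $y_i\mapsto x_i^r$ gives an $S_n$-equivariant isomorphism $\CC[y]\cong\CC[x_1^r,\dots,x_n^r]$; equivariance holds because $\sigma$ permutes the $x_i$, hence the $x_i^r$, the same way. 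Taking $S_n$-invariants and applying the fundamental theorem of symmetric polynomials, $\CC[y]^{S_n}=\CC[e_1(y),\dots,e_n(y)]$ with the $e_h(y)$ algebraically independent. Transporting through the isomorphism yields $\CC[x]^{G(r,n)}=\CC[E_1,\dots,E_n]$. The $E_h$ are algebraically independent because the $e_h(y)$ are and the map is injective, so this ring is a polynomial ring.

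\emph{Step 3 (degree).} The polynomial $e_h(y)$ is homogeneous of degree $h$ in $y$, a sum of products of $h$ distinct $y_i$. Each $y_i=x_i^r$ has degree $r$, so $E_h$ is homogeneous of degree $rh$. As a consistency check, $\prod_h rh=r^n n!=|G(r,n)|$, matching the Shephard--Todd--Chevalley degree criterion.
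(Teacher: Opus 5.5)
Your proposal is correct and follows essentially the same route as the paper. Both arguments use the monomial criterion $r\mid a_i$ for $\mu_r^n$-invariance, then $\CC[x]^{G(r,n)}=(\CC[x]^{\mu_r^n})^{S_n}=\CC[y]^{S_n}$ with $y_i=x_i^r$ together with the fundamental theorem of symmetric polynomials, and finally the degree count; your explicit injectivity argument for $y_i\mapsto x_i^r$, which keeps the $E_h$ algebraically independent, simply spells out what the paper leaves implicit in calling the result a free polynomial ring.
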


\begin{proof}
A monomial $x^\alpha$ is invariant under the toric subgroup $\mu_r^n$ if and only if $r\mid \alpha_i$ for all $i$: indeed, for $\zeta=(\zeta_1,\dots,\zeta_n)\in\mu_r^n$, $\zeta\cdot x^\alpha = \big(\prod_{i=1}^n\zeta_i^{\alpha_i}\big)x^\alpha$, and this product equals $1$ for \emph{all} choices of $\zeta\in\mu_r^n$ if and only if it equals $1$ for each $\zeta_i\in\mu_r$ taken separately (the other coordinates of $\zeta$ being fixed to $1$), that is, if and only if $\zeta_i^{\alpha_i}=1$ for all $\zeta_i\in\mu_r$, i.e. $r\mid\alpha_i$, for each $i$. Thus $\CC[x]^{\mu_r^n}=\CC[x_1^r,\dots,x_n^r]=\CC[y_1,\dots,y_n]$ upon setting $y_i:=x_i^r$. Since $G(r,n)=\mu_r^n\rtimes S_n$ with $\mu_r^n$ normal, we get $\CC[x]^{G(r,n)}=\big(\CC[x]^{\mu_r^n}\big)^{S_n}=\CC[y]^{S_n}$, where $S_n$ acts on $y$ by permuting the indices (the action of $S_n\subseteq G(r,n)$ on $x$ by permutation induces exactly the corresponding permutation on $y=x^r$). By the classical structure theorem for the invariant ring of $S_n$ \cite[Ch.~I]{Mac} (recalled in Fact~\ref{fact:A} below, following \cite{Ze}), $\CC[y]^{S_n}=\CC[e_1(y),\dots,e_n(y)]$, a free polynomial ring. Since $e_h(y)=e_h(x^r)=E_h(x)$, the result follows, and $\deg E_h = r\cdot\deg_y e_h = rh$.
\end{proof}

This corresponds exactly to the fundamental degrees $r,2r,\dots,nr$ of $G(r,n)$ given by the Chevalley--Shephard--Todd theorem \cite{ST,Ch}: the invariant ring of a finite complex reflection group is a polynomial ring if and only if the group is generated by reflections. And $G(r,n)$ is indeed generated by reflections: the reflections $t_i^{(\zeta)}$ ($1\le i\le n$, $\zeta$ a generator of $\mu_r$), together with the transpositions $\tau_{i,i+1}^{(1)}$ ($1\le i\le n-1$); both families are classified explicitly in Lemma~\ref{lem:disc} below. The former generate the toric subgroup $\mu_r^n$ coordinatewise, and the latter generate $S_n$ (the standard Coxeter generators); every element $(\zeta;\sigma)=(\zeta;\mathrm{id})\cdot(1;\sigma)$ of $G(r,n)=\mu_r^n\rtimes S_n$ is thus a semidirect product of the two.

Proposition~\ref{prop:invariants} is purely algebraic: it concerns \emph{polynomial} invariants and holds on all of $\CC^n$. The differential-operator constructions of the following sections, however, will require writing a general \emph{holomorphic} $G(r,n)$-invariant function $\phi$ (not assumed polynomial) as $\phi(x)=\eta(x^r)$ for a suitable $S_n$-invariant holomorphic $\eta$. This does not follow from Proposition~\ref{prop:invariants}, and requires a genuine analytic descent statement, valid away from the loci where the group action fails to be free.

\begin{lemma}[Holomorphic descent]\label{lem:descent}
Let $\Omega':=\{x\in\Omega : x_i^r\ne x_j^r \text{ for all } i\ne j\}$ and $Y':=\{y\in\CC^n : y_i\ne0 \text{ and } y_i\ne y_j\ (i\ne j)\}$. Then $G(r,n)$ acts freely on $\Omega'$, the map $x\mapsto x^r$ induces a biholomorphism $\Omega'/G(r,n)\xrightarrow{\ \sim\ }Y'/S_n$, and consequently every $G(r,n)$-invariant holomorphic function $\phi$ on $\Omega'$ can be written, uniquely, as $\phi(x)=\eta(x^r)$ for some $S_n$-invariant holomorphic function $\eta$ on $Y'$.
\end{lemma}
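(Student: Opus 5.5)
The plan is to factor the map $p:\Omega'\to Y'$, $p(x)=x^r$, through the toric subgroup $\mu_r^n$ first, and then pass to $S_n$. The steps, in order:

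\emph{Well-definedness and stability.} If $x\in\Omega'$ then $y=x^r$ has nonzero, pairwise distinct coordinates, so $p(\Omega')\subseteq Y'$. Conversely, every $y\in Y'$ has $r^n$ preimages, all lying in $\Omega'$. Hence $p:\Omega'\to Y'$ is surjective. Both $\Omega'$ and $Y'$ are stable under their respective groups, since $x_i^r$ is unchanged by $\zeta_i$ and the condition is symmetric in $i,j$.

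\emph{Freeness.} Suppose $g=(\zeta;\sigma)$ fixes $x\in\Omega'$. Taking $r$-th powers gives $y_{\sigma^{-1}(i)}=y_i$ for all $i$. Distinctness of the $y_i$ forces $\sigma=\mathrm{id}$. Then $\zeta_i x_i=x_i$ with $x_i\neq 0$ gives $\zeta_i=1$.

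\emph{Local biholomorphism and fibres.} On $\Omega$ the Jacobian of $p$ is $\prod_i r x_i^{r-1}\neq0$, so $p$ is a local biholomorphism. Fibres of $p$ are exactly $\mu_r^n$-orbits: $x_i^r=x_i'^r$ means $x_i'=\zeta_i x_i$. Thus $p$ is a finite unbranched covering $\Omega'\to Y'$ with deck group $\mu_r^n$ acting simply transitively on fibres. The map is proper, since preimages of compacts in $Y'$ are closed and bounded in $\Omega'$: the $x_i$ are bounded and bounded away from zero, and the $x_i^r-x_j^r$ are bounded away from zero.

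\emph{Equivariance and the quotient map.} Equivariance, $p(g\cdot x)=\bar g\cdot p(x)$ with $\bar g$ the image of $g$ in $S_n$, gives a continuous bijection $\Omega'/G(r,n)\to Y'/S_n$. Injectivity holds because $p(x)=\sigma\cdot p(x')$ implies $x=\zeta\cdot(\sigma\cdot x')$. Since both actions are free, both quotients are complex manifolds, and in local charts the induced map is $p$ itself, hence a biholomorphism.

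\emph{Descent of functions.} Given a $G(r,n)$-invariant holomorphic $\phi$, I would define $\eta$ on $Y'$ by $\eta(y)=\phi(x)$ for any $x\in p^{-1}(y)$. This is well defined by $\mu_r^n$-invariance. It is holomorphic because locally $\eta=\phi\circ s$ for a holomorphic local section $s$ of $p$, given by choosing branches of $y_i^{1/r}$ near $y$, which exist since $y_i\neq0$. $S_n$-invariance of $\eta$ follows from $S_n$-invariance of $\phi$ together with equivariance. Uniqueness follows from surjectivity of $p$.

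The main obstacle is the holomorphy of $\eta$, equivalently the analytic quotient structure. The plan avoids general GIT or analytic-quotient machinery by using the explicit local sections $s$ (local $r$-th roots) and performing the $S_n$-step only on the free quotient. Everything else is elementary.
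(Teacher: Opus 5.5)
Your proof is correct and follows essentially the same route as the paper: you get freeness by taking $r$-th powers, you show that $x\mapsto x^r$ is an unramified $\mu_r^n$-covering $\Omega'\to Y'$ that intertwines the residual $S_n$-action, and you identify the two free quotients. The only minor difference is that you construct $\eta$ directly from explicit local $r$-th-root sections, whereas the paper deduces the descent of $\phi$ from the quotient biholomorphism; both arguments are valid.
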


\begin{proof}
\emph{Freeness.} Let $g=(\zeta;\sigma)\in G(r,n)$ fix a point $x\in\Omega'$. If $\sigma$ does not fix every index, some $j\ne i$ has $\sigma(j)=i$, and the fixed-point equation $\zeta_ix_j=x_i$ gives, upon raising to the power $r$ and using $\zeta_i^r=1$, $x_j^r=x_i^r$, which is excluded on $\Omega'$. So $\sigma=\mathrm{id}$, and the equations $\zeta_ix_i=x_i$ with $x_i\ne0$ (since $x\in\Omega$) force $\zeta_i=1$ for every $i$. Hence $g=e$: the action is free. Being finite, $G(r,n)$ then acts properly discontinuously, so the quotient map $\Omega'\to\Omega'/G(r,n)$ is an unramified holomorphic covering, and $\Omega'/G(r,n)$ inherits a unique complex structure for which a function is holomorphic exactly when its pullback to $\Omega'$ is.

\emph{Identification of the quotient.} The map $x\mapsto x^r$ carries $\Omega'$ onto $Y'$, intertwines the $\mu_r^n$-action on $x$ with the trivial action on $y$ (so it is itself an unramified $\mu_r^n$-covering of $Y'$, by the freeness of $\mu_r^n$ on $\Omega'$, a special case of the above), and intertwines the residual $S_n$-action on $x$ with the permutation action on $y$, which is free on $Y'$ (a transposition fixes $y$ only if two of its coordinates coincide, excluded on $Y'$). Composing the two unramified coverings $\Omega'\to Y'\to Y'/S_n$ gives an unramified covering of degree $r^n\cdot n!=|G(r,n)|$ whose fibers are exactly the $G(r,n)$-orbits (since $\mu_r^n$ and $S_n$ together generate $G(r,n)$); this covering therefore coincides with the quotient map for the $G(r,n)$-action, giving the biholomorphism $\Omega'/G(r,n)\cong Y'/S_n$.

\emph{Conclusion.} Under this identification, a $G(r,n)$-invariant holomorphic function on $\Omega'$ is exactly a holomorphic function on $\Omega'/G(r,n)\cong Y'/S_n$, i.e., an $S_n$-invariant holomorphic function $\eta$ on $Y'$, pulled back along $x\mapsto x^r$: this is precisely $\phi(x)=\eta(x^r)$.
\end{proof}

\begin{remark}
Proposition~\ref{prop:invariants} and Lemma~\ref{lem:descent} are complementary, not interchangeable: the former is global and algebraic but restricted to polynomials, the latter is analytic and holds for arbitrary holomorphic functions but only on the dense open subset $\Omega'$. It is Lemma~\ref{lem:descent}, not Proposition~\ref{prop:invariants}, that justifies the descent $\phi=\eta(x^r)$ used for a general holomorphic invariant $\phi$ in the proof of Proposition~\ref{prop:prop5} below and in Corollary~\ref{cor:cor12}.
\end{remark}

\begin{corollary}\label{cor:base}
For $\lambda\vdash k$ a partition of $k$ with $l(\lambda^t)\le n$ (multiplicities $m_h$, $1\le h\le k$), set $E_\lambda:=\prod_{h=1}^k E_h^{m_h}$, homogeneous of degree $rk$ in $x$. The $\{E_\lambda\}_{\lambda\vdash k,\ l(\lambda^t)\le n}$ form a $\ZZ$-basis of the $G(r,n)$-invariant homogeneous polynomials of degree $rk$ with integer coefficients.
\end{corollary}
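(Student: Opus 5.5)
The plan is to reduce, via the substitution $y_i=x_i^r$, to the classical integral statement for $S_n$. The proof of Proposition~\ref{prop:invariants} shows that a polynomial is $\mu_r^n$-invariant exactly when it lies in $\CC[x_1^r,\dots,x_n^r]$. The monomial argument there is insensitive to the coefficient ring, so it equally gives $\ZZ[x]^{\mu_r^n}=\ZZ[x_1^r,\dots,x_n^r]$. Hence the map $\psi:\ZZ[y]\to\ZZ[x]$, $y_i\mapsto x_i^r$, is an injective ring homomorphism with image $\ZZ[x]^{\mu_r^n}$. It intertwines the $S_n$-actions, and it multiplies degrees by $r$.

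First, I will check that a homogeneous $G(r,n)$-invariant $f\in\ZZ[x]$ of degree $rk$ is $\psi(g)$ for a unique homogeneous $g\in\ZZ[y]^{S_n}$ of degree $k$. Injectivity of $\psi$ together with $\psi\circ\sigma=\sigma\circ\psi$ forces $g$ to be $S_n$-invariant. Since $\psi$ sends monomials to monomials and multiplies degrees by exactly $r$, $g$ is homogeneous of degree $k$ with integer coefficients. This shows that $\psi$ restricts to a $\ZZ$-module isomorphism from $\ZZ[y]^{S_n}_k$ onto the degree-$rk$ integral invariants of $G(r,n)$. By construction it sends $e_\lambda(y)=\prod_h e_h(y)^{m_h}$ to $E_\lambda$.

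Second, I will invoke the fundamental theorem of symmetric functions over $\ZZ$ (Fact~\ref{fact:A}, \cite[Ch.~I]{Mac}). It states that the products $e_\lambda$, for $\lambda\vdash k$ with all parts at most $n$, form a $\ZZ$-basis of $\ZZ[y_1,\dots,y_n]^{S_n}_k$. Since $e_h=0$ for $h>n$, the condition ``all parts $\le n$'' is exactly $l(\lambda^t)=\lambda_1\le n$, which is the indexing in the statement. Transporting this basis through the isomorphism $\psi$ gives the claim, and the degree assertion $\deg E_\lambda=r\sum_h h\,m_h=rk$ is immediate.

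The main obstacle is integrality, since Proposition~\ref{prop:invariants} was stated over $\CC$. I need two integral facts: the symmetric-function basis theorem over $\ZZ$, and the descent $\ZZ[x]^{\mu_r^n}=\ZZ[x^r]$. For the first, I will cite the standard triangularity argument: $e_{\lambda'}$ has leading monomial $y^\lambda$ with coefficient $1$ in dominance order, so the transition matrix to the monomial symmetric functions $m_\lambda$ is unitriangular over $\ZZ$. For the second, I will argue coefficientwise: distinct monomials remain linearly independent after scalar extension to $\CC$, so the criterion $r\mid\alpha_i$ applies monomial by monomial.
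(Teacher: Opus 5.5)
Your proposal is correct and follows essentially the same route as the paper: the injective, coefficient-preserving substitution $y_i\mapsto x_i^r$ identifies $\ZZ[y]^{S_n}_k$ with the integral degree-$rk$ invariants, and the $\ZZ$-basis $\{e_\lambda\}$ of Fact~\ref{fact:A} is transported to $\{E_\lambda\}$. The only difference is minor: you prove $\ZZ[x]\cap\CC[x]^{\mu_r^n}=\ZZ[x_1^r,\dots,x_n^r]$ directly by the monomial criterion, whereas the paper applies Proposition~\ref{prop:invariants} over $\CC$ and then reads off that the coefficients of the preimage are integers.
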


\begin{proof}
The ring morphism $\sigma:\ZZ[y_1,\dots,y_n]\to\ZZ[x_1,\dots,x_n]$, $y_i\mapsto x_i^r$, is injective (the $x_i^r$ are algebraically independent, just as the $x_i$ themselves are), and sends a monomial $y^\beta$ to $x^{r\beta}$ \emph{with the same coefficient} (no rescaling): it is thus an isomorphism of $\ZZ$-modules between $\ZZ[y]_k$ and the sub-$\ZZ$-module of $\ZZ[x]_{rk}$ generated by the monomials $\{x^{r\beta} : |\beta|=k\}$. By Fact~\ref{fact:A} (via \cite[Ch.~I]{Mac}), $\{e_\lambda(y)\}_{\lambda\vdash k}$ is a $\ZZ$-basis of $\ZZ[y]^{S_n}_k$; its image $\{E_\lambda(x)=\sigma(e_\lambda(y))\}_{\lambda\vdash k}$ is thus a $\ZZ$-basis of the $\ZZ$-module $\sigma\big(\ZZ[y]^{S_n}_k\big)$.

It remains to identify this last module with $\ZZ[x]^{G(r,n)}_{rk}$. The inclusion $\sigma(\ZZ[y]^{S_n}_k)\subseteq\ZZ[x]^{G(r,n)}_{rk}$ is immediate (the image of an $S_n$-symmetric invariant is $G(r,n)$-invariant, as in the proof of Proposition~\ref{prop:invariants}). Conversely, let $\phi\in\ZZ[x]^{G(r,n)}_{rk}$; by Proposition~\ref{prop:invariants} (valid over $\CC$), $\phi=\eta(x^r)$ for a unique $\eta\in\CC[y]^{S_n}_k$. Since $\phi$ is $G(r,n)$-invariant, it only involves monomials $x^{r\beta}$ (no other monomial could appear in a polynomial written solely in the $y_i=x_i^r$); by injectivity of $\sigma$ and the fact that it preserves coefficients exactly, monomial by monomial, $\eta$ therefore has the \emph{same} integer coefficients as $\phi$ (via $\beta\leftrightarrow r\beta$), so that $\eta\in\ZZ[y]_k$; and $\eta$ is $S_n$-invariant by the same argument as in the proof of Proposition~\ref{prop:invariants}. Thus $\phi\in\sigma(\ZZ[y]^{S_n}_k)$, which completes the proof.
\end{proof}

\section{Recollections for $S_n$ and the transfer principle}

This section assembles the tools needed to transport results about $G(r,n)$'s invariant ring from the better-understood case of $S_n$: Zemel's operators and dual coordinates for $S_n$, recalled below, and the substitution $y_i=x_i^r$ that connects the two settings, developed into the transfer principle in Section~\ref{sec:transfert}.

\subsection{Generators $D_I$ and dual coordinates $u_k$ for $S_n$}\label{sec:rappels}

To keep this note self-contained, we recall here the precise statements from Section~1 of \cite{Ze} that we use, without proof; the proofs themselves are found in \cite{Ze}. Here $y_1,\dots,y_n$ denote the variables, $e_h:=e_h(y_1,\dots,y_n)$, $\tilde e_h:=\frac{(n-h)!}{n!}e_h$, and for $I\subseteq\NN_n$, $D_I:=\sum_{i\in I}\partial_{y_i}/\prod_{i\ne j\in I}(y_j-y_i)$.

\begin{fact}[Theorem~1 of \cite{Ze}]\label{fact:A}
The products $\{e_\lambda\}_{\lambda\vdash k,\ l(\lambda^t)\le n}$ form a $\ZZ$-basis of the homogeneous symmetric polynomials of degree $k$ in the $y_i$ with integer coefficients. The ring $\ZZ[y_1,\dots,y_n]^{S_n}$ is generated, as a $\ZZ$-algebra, by $\{e_h\}_{h=1}^n$. This result is classical: \cite{Ze} itself refers to Macdonald's reference treatise \cite[Ch.~I]{Mac} for its proof, so we cite that directly, rather than \cite{Ze}, whenever this underlying fact is invoked.
\end{fact}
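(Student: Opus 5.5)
The plan follows the classical route through the monomial symmetric functions, since Fact~\ref{fact:A} is stated over $\ZZ$ and does not follow from the statement over $\CC$. The first step is to reduce to the homogeneous case. $S_n$ acts by permuting variables, so it preserves degree, and every symmetric polynomial with integer coefficients splits into symmetric homogeneous components with integer coefficients. The generation statement therefore follows from the basis statement: each $e_\lambda$ is a product of the $e_h$ with $h\le n$. Note that $l(\lambda^t)\le n$ means every part $\lambda_i\le n$. Hence we only need to prove that $\{e_\lambda\}$ is a $\ZZ$-basis in each degree $k$.

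Next, we write down the monomial basis of the degree-$k$ symmetric polynomials. A symmetric polynomial has the same coefficient on all monomials in one $S_n$-orbit. So the degree-$k$ symmetric polynomials over $\ZZ$ form a free $\ZZ$-module with basis the orbit sums $m_\mu$, where $\mu\vdash k$ and $l(\mu)\le n$. Transposition $\lambda\mapsto\lambda^t$ is a bijection between partitions with $l(\lambda^t)\le n$ and partitions with $l(\mu)\le n$. Both index sets therefore have the same cardinality.

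The third step is unitriangularity, which is the main point. Expand $e_\lambda=\prod_i e_{\lambda_i}$ and take its leading monomial in the lexicographic order. Choosing $y_1\cdots y_{\lambda_i}$ from each factor $e_{\lambda_i}$ gives the leading term $y^{\lambda^t}$, with coefficient $1$. Every other monomial $y^\nu$ appearing in the product has sorted exponent $\nu^+\le\lambda^t$ in dominance order. The reason is that each factor contributes a $0/1$ vector with $\lambda_i$ ones, and the sum of the first $j$ coordinates is at most $\sum_i\min(\lambda_i,j)=\lambda^t_1+\dots+\lambda^t_j$. It follows that
\[
e_\lambda = m_{\lambda^t}+\sum_{\mu<\lambda^t} a_{\lambda\mu}\,m_\mu,\qquad a_{\lambda\mu}\in\ZZ_{\ge0}.
\]
The transition matrix from $\{e_\lambda\}$ to $\{m_\mu\}$, with rows and columns ordered compatibly with a linear extension of dominance, is then unitriangular with integer entries. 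Its inverse is therefore also integral. This shows that the $e_\lambda$ form a $\ZZ$-basis.

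The main obstacle is the dominance inequality in the third step. It amounts to the counting bound on partial sums of $0/1$ vectors given above, and it must be checked carefully. Everything else is bookkeeping. One further point to record: the truncation $l(\mu)\le n$ occurs only because $m_\mu=0$ when $\mu$ has more than $n$ parts. The correspondence $\lambda\leftrightarrow\lambda^t$ handles this truncation exactly, so no spurious terms appear. Since the paper defers this proof to \cite[Ch.~I]{Mac}, this sketch would merely indicate that route.
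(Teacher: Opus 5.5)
Your proposal is correct and is essentially the standard argument of the source the paper defers to, \cite[Ch.~I, (2.3)--(2.4)]{Mac}; the paper itself gives no proof beyond that citation. Like Macdonald, you expand $e_\lambda=m_{\lambda^t}+\sum_{\mu<\lambda^t}a_{\lambda\mu}m_\mu$ unitriangularly in dominance order, using the same column-filling bound on partial sums, and then invert the integral transition matrix.

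The two points you leave implicit are routine:
\begin{itemize}
\item The coefficient of $y^{\lambda^t}$ is $1$ because equality in every partial-sum bound forces each factor to contribute exactly $y_1\cdots y_{\lambda_i}$.
\item The bound applies to the sorted exponent $\nu^+$ because $e_\lambda$ is symmetric. Equivalently, your counting argument bounds any $j$ coordinates, not just the first $j$.
\end{itemize}
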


\begin{fact}[Lemma~2]\label{fact:B}
For $I\subseteq\NN_n$, $e_h(y_{\NN_n}) = \sum_{l=0}^h e_l(y_I)\,e_{h-l}(y_{I^c})$.
\end{fact}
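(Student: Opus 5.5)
The identity $e_h(y_{\NN_n}) = \sum_{l=0}^h e_l(y_I)\,e_{h-l}(y_{I^c})$ of Fact~\ref{fact:B} follows from the multiplicativity of the generating polynomial of the elementary symmetric functions, so that is the argument I plan to give. For a finite index set $J\subseteq\NN_n$ set
\[
P_J(t) := \prod_{j\in J}(1+y_j t) = \sum_{m\ge 0} e_m(y_J)\,t^m ,
\]
with the usual conventions $e_0(y_J)=1$, $e_m(y_J)=0$ for $m>|J|$, and $P_\emptyset(t)=1$. The expansion on the right is the definition of $e_m(y_J)$ as the sum of all products of $m$ distinct variables indexed by $J$. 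To see this, expand the product by choosing from each factor either $1$ or $y_j t$; a choice corresponds to a subset $S\subseteq J$ and contributes $\prod_{j\in S}y_j\,t^{|S|}$.

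Next, since $\NN_n = I\sqcup I^c$ is a disjoint union, the product over $\NN_n$ splits as $P_{\NN_n}(t)=P_I(t)\,P_{I^c}(t)$ in $\ZZ[y_1,\dots,y_n][t]$. Comparing the coefficients of $t^h$ on both sides by the Cauchy product formula gives
\[
e_h(y_{\NN_n}) = \sum_{l=0}^{h} e_l(y_I)\,e_{h-l}(y_{I^c}),
\]
which is the claim. Terms with $l>|I|$ or $h-l>|I^c|$ vanish automatically by the convention above, so no case distinction is needed.

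As a cross-check, I would note the direct combinatorial reading. Each $h$-subset $S\subseteq\NN_n$ decomposes uniquely as $(S\cap I)\sqcup(S\cap I^c)$, with $|S\cap I|=l$ for a unique $l$. Summing $\prod_{s\in S}y_s$ over all $S$ therefore regroups exactly into the right-hand side.

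There is no genuine obstacle here. The only point requiring care is the bookkeeping of the conventions: $e_0=1$, vanishing beyond the number of variables, and the degenerate cases $I=\emptyset$ or $I=\NN_n$. All of these are absorbed by the generating-function formulation.
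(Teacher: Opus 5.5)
Your argument is correct and complete. Comparing coefficients of $t^h$ in $\prod_{j\in\NN_n}(1+y_jt)=\prod_{j\in I}(1+y_jt)\prod_{j\in I^c}(1+y_jt)$ is the standard proof. Your subset-splitting cross-check $S=(S\cap I)\sqcup(S\cap I^c)$ is the same count written out directly.

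Note that the paper does not prove this fact itself; it recalls it without proof from Lemma~2 of \cite{Ze}. It only uses it in Lemma~\ref{lem:lem2}, as a polynomial identity in $\ZZ[y_1,\dots,y_n]$ specialized at $y=x^r$. That is exactly the form in which your generating-function argument establishes it.
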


\begin{fact}[Lemma~3]\label{fact:C}
For $I$ nonempty, $|I|=d$: $D_I\,e_l(y_I) = \delta_{l,d}$.
\end{fact}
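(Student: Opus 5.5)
The plan is to package all the $e_l(y_I)$ into one generating polynomial, apply $D_I$ to it, and recognise the result as a Lagrange interpolation identity. Write $d=|I|$ and let $t$ be a formal parameter. The generating function is
\[
F_I(t):=\prod_{j\in I}(1+t\,y_j)=\sum_{l=0}^{d} e_l(y_I)\,t^l .
\]
The operator $D_I$ is linear and does not involve $t$. So the statement is equivalent to the single identity $D_I F_I(t)=t^d$, compared coefficient by coefficient in $t$. The cases $l>d$ (where $e_l(y_I)=0$) and $l=0$ (where $e_0=1$ is killed by every $\partial_{y_i}$) are then automatic.

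\textbf{Step 1 (differentiate).} Since $F_I$ is linear in each $y_i$, we get $\partial_{y_i}F_I(t)=t\prod_{j\in I,\,j\ne i}(1+t\,y_j)$. Equivalently, $\partial_{y_i}e_l(y_I)=e_{l-1}(y_{I\setminus\{i\}})$. Substituting into the definition of $D_I$ gives
\[
D_IF_I(t)=t\sum_{i\in I}\ \prod_{j\in I,\,j\ne i}\frac{1+t\,y_j}{y_j-y_i}.
\]

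\textbf{Step 2 (rewrite as Lagrange polynomials).} Working over $\CC(t)$, put $s:=-1/t$, so that $1+t\,y_j=t\,(y_j-s)$. Each product then becomes $t^{d-1}L_i(s)$, where
\[
L_i(s):=\prod_{j\in I,\,j\ne i}\frac{s-y_j}{y_i-y_j}.
\]
The sign $(-1)^{d-1}$ appears in both numerator and denominator and cancels. The $L_i$ are the Lagrange basis polynomials attached to the distinct nodes $\{y_i\}_{i\in I}$.

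\textbf{Step 3 (apply the interpolation identity).} The classical identity $\sum_{i\in I}L_i(s)\equiv1$ holds because the left side is a polynomial in $s$ of degree $\le d-1$ taking the value $1$ at $d$ distinct points. Hence $D_IF_I(t)=t\cdot t^{d-1}=t^d$, which is the claim.

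The only delicate point is Step 2: converting the factors $1+ty_j$ into Lagrange form without sign errors, and justifying the substitution $s=-1/t$. For the latter, both sides of $D_IF_I(t)=t^d$ are polynomials in $t$ with coefficients rational in $y$, so the identity may be checked in $\CC(y)(t)$ with $t$ invertible.

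If one prefers to avoid the substitution, an alternative route for the same step is the divided-difference identity $\sum_{i}p(y_i)/\prod_{j\ne i}(y_i-y_j)=[\text{coefficient of }z^{d-1}\text{ in }p]$ for $\deg p\le d-1$. This would be applied to $p(z)=\prod_{j}(1+ty_j)/(1+tz)$ reduced modulo $\prod_j(z-y_j)$; the $s$-substitution is cleaner, so I would present that.
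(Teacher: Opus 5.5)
Your proof is correct. The computation $\partial_{y_i}F_I(t)=t\prod_{j\in I,\,j\ne i}(1+ty_j)$ is right. The rewriting $1+ty_j=t(y_j-s)$ with $s=-1/t$ turns the $i$-th summand into $t^{d-1}L_i(s)$ once the two factors $(-1)^{d-1}$ cancel. Since $\sum_{i\in I}L_i(s)=1$ is an identity in $\CC(y)[s]$, substituting $s=-1/t$ (a ring homomorphism into $\CC(y)(t)$) is legitimate and gives $D_IF_I(t)=t^d$. The case $d=1$ (empty products, $L_i\equiv1$) is covered too.

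The paper contains no proof of this statement to compare with. Fact~\ref{fact:C} is one of the results of \cite{Ze} that the paper explicitly recalls without proof, and it is only used downstream, via the transfer principle, in Lemma~\ref{lem:lem3} and hence Corollary~\ref{cor:cor4}. Your argument is therefore an independent, self-contained verification.

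Its main advantage is that one identity handles every $l$ at once: $l=0$ and $l>d$ fall out of comparing coefficients, and the only input is that the Lagrange basis sums to one. The nearest technique in the paper is Lemma~\ref{lem:divdiff}, which reads off $\sum_{\mathrm{cyc}}a^m/((a-b)(a-c))$ as the leading coefficient of a Lagrange interpolant at three nodes. That is your alternative divided-difference route in the case $d=3$. For general $d$, that route would force you to treat each $l$ separately. You would expand $e_{l-1}(y_{I\setminus\{i\}})=\sum_{m=0}^{l-1}(-y_i)^m e_{l-1-m}(y_I)$ and use that $\sum_{i\in I}y_i^m/\prod_{j\ne i}(y_i-y_j)$ equals $0$ for $m<d-1$ and $1$ for $m=d-1$. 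So your preference for the $s$-substitution is well founded.
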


\begin{fact}[Corollary~4]\label{fact:D}
$D_I\,e_h = e_{h-d}(y_{I^c})$ for $h\ge d$ (and $0$ for $h<d$, with $e_l:=0$ if $l<0$).
\end{fact}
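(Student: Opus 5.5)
The statement to prove is Fact~\ref{fact:D}: for $|I|=d$, $D_I\,e_h=e_{h-d}(y_{I^c})$. I would prove it by combining Fact~\ref{fact:B} with Fact~\ref{fact:C}, treating $D_I$ as a first-order operator that involves only the variables $y_i$ with $i\in I$.

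\textbf{Step 1: expand $e_h$.} By Fact~\ref{fact:B},
\[
e_h(y_{\NN_n})=\sum_{l=0}^h e_l(y_I)\,e_{h-l}(y_{I^c}).
\]

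\textbf{Step 2: pull out the $I^c$ factors.} The operator $D_I=\sum_{i\in I}\partial_{y_i}\big/\prod_{i\ne j\in I}(y_j-y_i)$ involves only derivatives $\partial_{y_i}$ with $i\in I$. Its coefficients are rational functions, and multiplying a derivation by functions keeps it a derivation. Each factor $e_{h-l}(y_{I^c})$ depends only on the variables indexed by $I^c$, so it is annihilated by every $\partial_{y_i}$ with $i\in I$. The Leibniz rule therefore lets it pass through $D_I$ as a constant:
\[
D_I e_h=\sum_{l=0}^h e_{h-l}(y_{I^c})\,D_I\,e_l(y_I).
\]

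\textbf{Step 3: apply Fact~\ref{fact:C}.} Since $D_I\,e_l(y_I)=\delta_{l,d}$, only the term $l=d$ survives, and only if $d\le h$.
\begin{itemize}
\item If $h\ge d$, the result is $e_{h-d}(y_{I^c})$.
\item If $h<d$, no term survives, so the result is $0$. This agrees with the convention $e_l:=0$ for $l<0$.
\end{itemize}

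\textbf{Edge cases.} Fact~\ref{fact:C} is stated for $l$ in the range $0\le l\le |I|$.
\begin{itemize}
\item For $l>d$, the terms of Fact~\ref{fact:B} vanish on their own, because $e_l(y_I)=0$ when $l>|I|$.
\item For $l=0$, we have $e_0=1$, which is killed by $D_I$. This is consistent with $\delta_{0,d}=0$, since $I$ is nonempty.
\end{itemize}

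There is no real obstacle. The only point needing care is justifying that the $I^c$-factors commute past the rational coefficients of $D_I$, and Step~2 handles that.
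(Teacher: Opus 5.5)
Your proof is correct and is essentially the paper's argument. The paper only recalls Fact~\ref{fact:D} from \cite{Ze}, but its proof of the transferred version, Corollary~\ref{cor:cor4}, runs exactly as yours does: expand via Fact~\ref{fact:B} (Lemma~\ref{lem:lem2}), pass the $I^c$-factors through the first-order operator by the Leibniz rule, and keep only the $l=d$ term by Fact~\ref{fact:C} (Lemma~\ref{lem:lem3}).
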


\begin{fact}[Proposition~5]\label{fact:E}
For $I\subseteq\NN_n$, $i\notin I$, $j\in I$, $J=I\cup\{i\}$, and $\eta$ symmetric: $\delta_{ij}D_I\eta = D_J\eta$, where $\delta_{ij}\eta:=(\eta-s_{ij}\eta)/(y_i-y_j)$.
\end{fact}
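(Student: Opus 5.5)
The plan is to reduce the identity to an explicit comparison of coefficients of the first-order operators $\partial_{y_k}$, using only the symmetry of $\eta$ and the definition of $D_I$. The difficulty is in organizing the computation rather than in any single estimate.

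\emph{Step 1: conjugation by $s_{ij}$.} Write $K:=I\setminus\{j\}$ and $I':=K\cup\{i\}=s_{ij}(I)$. The transposition $s_{ij}$ swaps $y_i$ and $y_j$. It therefore conjugates $\partial_{y_j}$ into $\partial_{y_i}$ and fixes $\partial_{y_k}$ for $k\in K$. It also carries the denominators $\prod_{l\in I\setminus\{k\}}(y_l-y_k)$ into those indexed by $I'$. Hence $s_{ij}\circ D_I\circ s_{ij}=D_{I'}$ as operators. Since $\eta$ is symmetric, $s_{ij}\eta=\eta$, and so $s_{ij}(D_I\eta)=D_{I'}\eta$. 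This gives
\[
\delta_{ij}D_I\eta=\frac{D_I\eta-D_{I'}\eta}{y_i-y_j},
\]
so it suffices to prove the operator identity $D_I-D_{I'}=(y_i-y_j)\,D_J$ on all functions. Symmetry of $\eta$ is used only in this step.

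\emph{Step 2: coefficient comparison.} Set $P_K(t):=\prod_{l\in K}(y_l-t)$ and, for $k\in K$, $Q_k:=\prod_{l\in K\setminus\{k\}}(y_l-y_k)$. Then
\[
D_I=\frac{\partial_{y_j}}{P_K(y_j)}+\sum_{k\in K}\frac{\partial_{y_k}}{(y_j-y_k)Q_k},
\qquad
D_{I'}=\frac{\partial_{y_i}}{P_K(y_i)}+\sum_{k\in K}\frac{\partial_{y_k}}{(y_i-y_k)Q_k}.
\]
In $D_I-D_{I'}$ the three kinds of terms are as follows.
\begin{itemize}
\item The coefficient of $\partial_{y_j}$ is $1/P_K(y_j)$.
\item The coefficient of $\partial_{y_i}$ is $-1/P_K(y_i)$.
\item For $k\in K$, the coefficient of $\partial_{y_k}$ is $\frac{1}{Q_k}\bigl(\frac{1}{y_j-y_k}-\frac{1}{y_i-y_k}\bigr)=\frac{y_i-y_j}{(y_j-y_k)(y_i-y_k)Q_k}$.
\end{itemize}
Dividing by $y_i-y_j$ gives, respectively,
\[
\frac{1}{\prod_{l\in J\setminus\{j\}}(y_l-y_j)},\qquad
\frac{1}{\prod_{l\in J\setminus\{i\}}(y_l-y_i)},\qquad
\frac{1}{\prod_{l\in J\setminus\{k\}}(y_l-y_k)}.
\]
For the second one, note that $-1/\bigl((y_i-y_j)P_K(y_i)\bigr)=1/\bigl((y_j-y_i)P_K(y_i)\bigr)$. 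These are exactly the coefficients of $D_J$ for $J=K\cup\{i,j\}$, which proves the identity.

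\emph{Main obstacle.} The main point requiring care is Step 1: verifying that conjugation by $s_{ij}$ really turns $D_I$ into $D_{I'}$, including the signs of the factors $(y_l-y_k)$. The sign bookkeeping for the $\partial_{y_i}$ coefficient in Step 2 needs similar care. The partial-fraction step for $k\in K$ is the only genuinely computational point, and it is elementary.
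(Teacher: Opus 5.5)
Your proof is correct. The conjugation identity $s_{ij}\circ D_I\circ s_{ij}=D_{I'}$ holds with the signs you wrote. The coefficient $1/P_K(y_j)$ of $\partial_{y_j}$ is carried to $1/P_K(y_i)$ in front of $\partial_{y_i}$, and the factor $y_j-y_k$ in the coefficient of $\partial_{y_k}$, $k\in K$, becomes $y_i-y_k$. The three coefficient comparisons in $D_I-D_{I'}=(y_i-y_j)D_J$ are right, including the sign flip for $\partial_{y_i}$, and the case $K=\emptyset$ is covered by empty products.

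There is no proof in the paper to compare with. This statement is one of those recalled from \cite{Ze} without proof. The paper uses it only indirectly, to obtain its $x$-side analogue, Proposition~\ref{prop:prop5}: it writes $\phi=\eta(x^r)$ through the descent Lemma~\ref{lem:descent}, then invokes the transfer principle, Proposition~\ref{prop:transfert}.

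Your route buys something concrete there. Apart from the single use of $s_{ij}\eta=\eta$, everything you do is an identity between first-order operators with rational coefficients. So only invariance under the one transposition $s_{ij}$ is needed, not full symmetry. The same computation also goes through verbatim on the $x$-side: $\sigma_{ij}\circ\Delta_j\circ\sigma_{ij}=\Delta_i$, $\sigma_{ij}$ commutes with $\Delta_k$ for $k\neq i,j$, and all coefficients are the same rational expressions evaluated at the $x_l^r$. This gives $\delta_{ij}\mathcal D_I\phi=\mathcal D_J\phi$ wherever the denominators are nonzero, for any $\phi$ that is holomorphic on a $\sigma_{ij}$-stable open subset of $\Omega$ and invariant under $\sigma_{ij}$ alone. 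It needs neither the descent lemma nor $G(r,n)$-invariance. What the paper's route buys in exchange is uniformity: once the transfer principle is in place, every identity of \cite{Ze} is imported at once without redoing any computation.
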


\begin{fact}[Proposition~6]\label{fact:F}
$D_d:=d!\sum_{|I|=d}D_I$ sends $e_h$ ($h\ge d$) to $\frac{(n-h+d)!}{(n-h)!}e_{h-d}$, and $0$ if $h<d$.
\end{fact}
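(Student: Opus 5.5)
From the combinatorial description in Fact~\ref{fact:D} and the counting of subsets, I plan to prove Fact~\ref{fact:F} by summing Fact~\ref{fact:D} over all $I$ with $|I|=d$. By linearity, $D_d e_h = d!\sum_{|I|=d} D_I e_h$. For $h<d$ every term vanishes by Fact~\ref{fact:D}, which gives the second assertion. For $h\ge d$, the same fact gives
\[
D_d e_h = d!\sum_{|I|=d} e_{h-d}(y_{I^c}).
\]
So everything reduces to evaluating the symmetric polynomial $\sum_{|I|=d} e_{h-d}(y_{I^c})$.

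To evaluate it, I will expand $e_{h-d}(y_{I^c})$ as a sum of squarefree monomials $y_K$ with $K\subseteq I^c$ and $|K|=h-d$. I then exchange the order of summation and count, for a fixed $K\subseteq\NN_n$ with $|K|=h-d$, the subsets $I$ of size $d$ disjoint from $K$. Such $I$ are chosen from $n-h+d$ elements, so there are $\binom{n-h+d}{d}$ of them, independently of $K$. This yields
\[
\sum_{|I|=d} e_{h-d}(y_{I^c}) = \binom{n-h+d}{d}\, e_{h-d}(y_{\NN_n}).
\]

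Multiplying by $d!$ gives $d!\binom{n-h+d}{d} = \frac{(n-h+d)!}{(n-h)!}$, which is the stated coefficient. The edge cases need a quick check. When $h-d=0$ the convention $e_0=1$ makes the counting valid, with $\binom{n}{d}$ subsets. When $h>n$ we have $e_h=0$; the formula is then only asserted for $1\le h\le n$, or it holds trivially with the usual convention.

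The only point requiring care is the double-counting step, specifically checking that the count of admissible $I$ does not depend on $K$. Everything else follows from Fact~\ref{fact:D} and linearity.
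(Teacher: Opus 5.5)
Your proof is correct and takes essentially the approach the paper has in mind. The paper recalls this fact from \cite{Ze} without proof, but in the proof of Proposition~\ref{prop:prop6} it points to the same argument: sum Fact~\ref{fact:D} (equivalently Corollary~\ref{cor:cor4}) over all $I$ with $|I|=d$, then count the $\binom{n-h+d}{d}$ subsets $I$ disjoint from each monomial's support, which gives the factor $d!\binom{n-h+d}{d}=\frac{(n-h+d)!}{(n-h)!}$.
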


\begin{fact}[Corollary~7]\label{fact:G}
$D_d\,\tilde e_h = \tilde e_{h-d}$ ($0\le h\le n$, zero if $h<d$).
\end{fact}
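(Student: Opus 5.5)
The statement to prove is Fact~\ref{fact:G}: $D_d\,\tilde e_h=\tilde e_{h-d}$, with $\tilde e_h=\frac{(n-h)!}{n!}e_h$. I would deduce it directly from Fact~\ref{fact:F} by tracking the normalizing constants. No new analytic input is needed.

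\emph{Case $h<d$.} Fact~\ref{fact:F} gives $D_d e_h=0$. Since $D_d$ is linear, $D_d\tilde e_h=0$, which is the claimed value under the convention that $\tilde e_l:=0$ for $l<0$.

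\emph{Case $h\ge d$.} $D_d$ is a linear operator with constant coefficients in front of each $D_I$, so it commutes with multiplication by the scalar $\frac{(n-h)!}{n!}$. Applying Fact~\ref{fact:F},
\[
D_d\,\tilde e_h=\frac{(n-h)!}{n!}\cdot\frac{(n-h+d)!}{(n-h)!}\,e_{h-d}=\frac{(n-(h-d))!}{n!}\,e_{h-d}=\tilde e_{h-d}.
\]
The factor $(n-h)!$ cancels exactly, and what remains is precisely the normalization of index $h-d$.

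\emph{Boundary cases.} For $h=d$ the formula reads $D_d\tilde e_d=\tilde e_0=\frac{n!}{n!}e_0=1$, which is consistent with Fact~\ref{fact:C} summed over the $\binom nd$ subsets $I$ with $|I|=d$. For $d=0$ one takes $D_0=\mathrm{id}$, and the statement is trivial.

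The only point needing care, and it is minor, is that the constant $\frac{(n-h+d)!}{(n-h)!}$ in Fact~\ref{fact:F} is well defined for $0\le h\le n$. This holds because $n-h\ge0$ and $n-h+d\le n$ whenever $h\ge d$. That is exactly why the range is restricted to $0\le h\le n$.
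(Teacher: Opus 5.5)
Your proposal is correct and follows the same route as the paper. The paper's proof of Corollary~\ref{cor:cor7}, which it describes as identical to Zemel's Corollary~7, multiplies the formula of Fact~\ref{fact:F} by $\frac{(n-h)!}{n!}$ and cancels the $(n-h)!$, exactly as you do. Your side check at $h=d$ actually relies on Fact~\ref{fact:D}, namely $D_Ie_d=e_0(y_{I^c})=1$, rather than on Fact~\ref{fact:C} alone; this does not affect the argument.
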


\begin{fact}[Corollary~8]\label{fact:H}
For $\lambda\vdash k$ (multiplicities $m_h$): $D_d\,\tilde e_\lambda = \sum_{h=d}^k m_h\,\tilde e_{\lambda-d\varepsilon_h}$, where $\tilde e_\lambda:=\prod_h \tilde e_h^{m_h}$ and $\lambda-d\varepsilon_h$ is the partition of $k-d$ obtained by replacing a part $h$ of $\lambda$ with $h-d$.
\end{fact}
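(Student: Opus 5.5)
The statement is Fact~\ref{fact:H}: for $\lambda\vdash k$ with multiplicities $m_h$, $D_d\tilde e_\lambda=\sum_h m_h\,\tilde e_{\lambda-d\varepsilon_h}$. I would deduce it from Fact~\ref{fact:G} together with a Leibniz-type rule for $D_d$ on products of symmetric functions. The point is that although $D_d$ is a $d$-th order operator, on symmetric functions it acts as a derivation modulo terms that vanish. So the plan is to prove the Leibniz rule first, then apply it to the product $\tilde e_\lambda=\prod_h\tilde e_h^{m_h}$.

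\emph{Step 1: structure of $D_I$.} For $I$ with $|I|=d$, write $D_I=\sum_{i\in I}c_i\partial_{y_i}$ with $c_i=1/\prod_{j\in I,j\ne i}(y_j-y_i)$. This operator is of first order; the factor $d!$ and the sum over $I$ only symmetrize it. Each $D_I$ is therefore itself a derivation:
\[
D_I(fg)=(D_If)\,g+f\,(D_Ig).
\]
Summing over $|I|=d$ and multiplying by $d!$ shows that $D_d$ is a derivation on all functions, symmetric or not.

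\emph{Step 2: apply the derivation to the product.} Since $D_d$ is a derivation, the product rule gives
\[
D_d\tilde e_\lambda=\sum_h m_h\,\tilde e_h^{m_h-1}\,(D_d\tilde e_h)\prod_{h'\ne h}\tilde e_{h'}^{m_{h'}}.
\]
Substituting $D_d\tilde e_h=\tilde e_{h-d}$ from Fact~\ref{fact:G} (zero when $h<d$) turns each summand into $\tilde e_{\lambda-d\varepsilon_h}$, with the convention $\tilde e_0=1$. This yields the stated formula, with the sum effectively running over $h\ge d$ and at most up to $k$.

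\emph{Main obstacle.} The only delicate point is confirming that $D_I$ really is first order as defined, i.e.\ that $\sum_{i\in I}\partial_{y_i}/\prod(\cdots)$ carries a single derivative per term. The definition recalled above, where the denominator depends on $i$, makes this so. If the intended definition were instead a composite operator, one would need the symmetric-argument analysis of \cite{Ze}, using Fact~\ref{fact:E} to show that the cross terms cancel on symmetric inputs. With the definition given, however, Step 1 is immediate, and the rest is bookkeeping of multiplicities, including the edge case $h=d$, which produces $\tilde e_0=1$ and thereby removes a part from $\lambda$.
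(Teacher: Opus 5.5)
Your argument is correct and is essentially the paper's. Fact~\ref{fact:H} is only quoted from \cite{Ze}, but the paper proves its transferred form, Corollary~\ref{cor:cor8}, exactly this way: Proposition~\ref{prop:leibniz} observes that $\mathcal D_d=\sum_i g_i\partial_{x_i}$ is a rational vector field and hence a derivation, and the proof then applies the product rule to $\prod_h\widetilde E_h^{m_h}$ and substitutes Corollary~\ref{cor:cor7}. The only extra detail the paper adds is that the identity, a priori valid off the polar locus, extends because the right-hand side is a polynomial; this is automatic if you regard $D_d$ as a derivation of the field $\CC(y)$.
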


\begin{fact}[Theorem~10]\label{fact:I}
For every $1\le k\le n$, there exists a unique $u_k\in\QQ[y]^{S_n}$, homogeneous of degree $k$, such that $D_d u_k=\delta_{d,k}$ for all $d$; explicitly,
$u_k = \sum_{\lambda\vdash k}(-1)^{l(\lambda)-1}\frac{(l(\lambda)-1)!}{\prod_h m_h!}\,\tilde e_\lambda$.
\end{fact}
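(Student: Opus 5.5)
The plan is to reduce everything to a computation in the polynomial ring $\QQ[\tilde e_1,\dots,\tilde e_n]$, which by Fact~\ref{fact:A} (after rescaling $e_h\mapsto\tilde e_h$) is all of $\QQ[y]^{S_n}$. The key observation is that Fact~\ref{fact:H} identifies the restriction of $D_d$ to this ring with a derivation. Set $\tilde e_0:=1$ and let
\[
\delta_d:=\sum_{h=d}^{n}\tilde e_{h-d}\,\frac{\partial}{\partial\tilde e_h}.
\]
Then $\delta_d\tilde e_\lambda=\sum_h m_h\,\tilde e_{\lambda-d\varepsilon_h}$, because differentiating $\prod_h\tilde e_h^{m_h}$ in $\tilde e_h$ produces the factor $m_h$ and replaces one part $h$ by $h-d$. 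Both $D_d$ and $\delta_d$ are linear and agree on the basis $\{\tilde e_\lambda\}$. Hence $D_d=\delta_d$ on $\QQ[y]^{S_n}$, and the problem becomes one about explicit first-order operators.

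\emph{Existence.} I would use the generating series $\mathcal E(t):=\sum_{h=0}^n\tilde e_h t^h\in\QQ[\tilde e][t]$. Applying $\delta_d$ coefficientwise gives
\[
\delta_d\mathcal E(t)=\sum_{h\ge d}\tilde e_{h-d}t^h\equiv t^d\,\mathcal E(t)\pmod{t^{n+1}},
\]
since the only discrepancy comes from the terms $\tilde e_{h-d}t^h$ with $h>n$. Because $\delta_d$ is a derivation and $\mathcal E(0)=1$, the formal logarithm $\log\mathcal E(t)=\sum_{k\ge1}u_kt^k$ is well defined. It satisfies $\delta_d\log\mathcal E=\mathcal E^{-1}\delta_d\mathcal E\equiv t^d\pmod{t^{n+1}}$. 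Comparing coefficients of $t^k$ for $1\le k\le n$ gives $\delta_du_k=\delta_{d,k}$.

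Each $u_k$ is homogeneous of degree $k$ in $y$, since $\tilde e_h$ has degree $h$. Expanding $\log(1+x)=\sum_{l\ge1}(-1)^{l-1}x^l/l$ with $x=\sum_{h\ge1}\tilde e_ht^h$, the coefficient of $\tilde e_\lambda t^k$ is $\frac{(-1)^{l-1}}{l}\cdot\frac{l!}{\prod_hm_h!}$, where $l=l(\lambda)$. This is exactly the stated coefficient $(-1)^{l(\lambda)-1}\frac{(l(\lambda)-1)!}{\prod_hm_h!}$.

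\emph{Uniqueness.} Let $v$ be the difference of two solutions. Then $v\in\QQ[y]^{S_n}$ is homogeneous of degree $k\ge1$ and $\delta_dv=0$ for all $1\le d\le n$. Since $\tilde e_0=1$, the system
\[
\delta_d=\frac{\partial}{\partial\tilde e_d}+\sum_{h>d}\tilde e_{h-d}\frac{\partial}{\partial\tilde e_h}
\]
expresses $(\delta_1,\dots,\delta_n)$ in terms of $(\partial_{\tilde e_1},\dots,\partial_{\tilde e_n})$ by a unitriangular matrix with entries in $\QQ[\tilde e]$. That matrix is invertible over the ring, so every $\partial_{\tilde e_h}v=0$. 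Thus $v$ is a constant, and since it is homogeneous of positive degree, $v=0$.

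The main point requiring care is the truncation at $n$: $\mathcal E$ stops at $t^n$, so $\delta_d\mathcal E=t^d\mathcal E$ holds only modulo $t^{n+1}$. I would check that this never affects coefficients of $t^k$ with $k\le n$, which is immediate since the error terms have $t$-degree greater than $n$. This is exactly why the theorem is restricted to $k\le n$.
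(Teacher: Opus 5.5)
Your proof is correct, and it follows a different route from the paper's. The paper only recalls this statement from \cite{Ze} without proof. However, its second, self-contained proof of Theorem~\ref{thm:main} becomes a proof of exactly this fact at $r=1$, and it uses the same inputs you do: the basis of Fact~\ref{fact:A} and the action of $D_d$ on $\tilde e_\lambda$ from Facts~\ref{fact:G}--\ref{fact:H}.

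\textbf{The paper's route.} Uniqueness is proved by descending induction on the largest part of $\lambda$. The linear system for the coefficients $c_\lambda$ is solved layer by layer, with diagonal entries $m_d\neq0$. Existence is proved by \emph{verifying} the stated formula: the coefficient of each $\tilde e_\rho$ in $D_d u_k$ is computed, and the cancellation Lemma~\ref{lem:cancel} shows it vanishes. The connection with $\log\bigl(1+\sum_h\tilde e_h s^h\bigr)$ appears only afterwards, in a combinatorial remark, and is not used as a proof device.

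\textbf{Your route.} You first restate Fact~\ref{fact:H} as saying that $D_d$ acts on $\QQ[\tilde e_1,\dots,\tilde e_n]$ as the derivation $\delta_d=\sum_{h\ge d}\tilde e_{h-d}\,\partial_{\tilde e_h}$. After that the argument is structural:
\begin{itemize}
\item Existence and the explicit formula come out together from $\delta_d\mathcal E\equiv t^d\mathcal E\pmod{t^{n+1}}$, with $\mathcal E(t)=\sum_{h=0}^n\tilde e_ht^h$, and the logarithmic derivative. The whole of Lemma~\ref{lem:cancel} is thereby compressed into the single identity $\mathcal E^{-1}\cdot t^d\mathcal E=t^d$.
\item Uniqueness follows because the matrix $(\tilde e_{h-d})_{d,h}$ is unitriangular over the ring. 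This forces $\partial_{\tilde e_h}v=0$ for every $h$ at once, rather than solving coefficient by coefficient.
\end{itemize}

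\textbf{Comparison.} Your approach explains where the formula comes from instead of checking it. As a by-product, since $\QQ[y]^{S_n}=\QQ[u_1,\dots,u_n]$, it identifies $D_d$ with $\partial/\partial u_d$ on the invariant ring. That gives the polynomial case of Fact~\ref{fact:J}, and the commutativity of the $D_d$ on invariants, for free. The paper's approach is more pedestrian. Its advantage is that the triangular structure stays visible at the level of the individual coefficients $c_\lambda$, which is exactly what the authors set out to exhibit.
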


\begin{fact}[Corollary~12]\label{fact:J}
If $\eta(y_1,\dots,y_n)=\psi(u_1,\dots,u_n)$ is symmetric, then $\psi_{u_d}=D_d\eta$ for every $d$ (wherever the $y_i$ are pairwise distinct).
\end{fact}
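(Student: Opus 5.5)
The plan is a chain-rule argument, with Fact~\ref{fact:I} supplying the duality $D_d u_k=\delta_{d,k}$. The one structural input is that each $D_d$ is a \emph{first-order} differential operator with no zeroth-order term. Indeed, by definition $D_I=\sum_{i\in I}c_{I,i}(y)\,\partial_{y_i}$, where $c_{I,i}(y)=1/\prod_{i\ne j\in I}(y_j-y_i)$ is holomorphic wherever the $y_i$ are pairwise distinct. Hence $D_d=d!\sum_{|I|=d}D_I=\sum_{i=1}^n a_{d,i}(y)\,\partial_{y_i}$ is a holomorphic vector field on $Y_0:=\{y: y_i\ne y_j\ (i\ne j)\}$. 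In particular $D_d$ is a derivation and obeys the chain rule: for any holomorphic $\psi$ on an open set containing $u(y)$,
\[
D_d\big(\psi(u_1,\dots,u_n)\big)=\sum_{k=1}^n \psi_{u_k}(u(y))\,D_d u_k .
\]

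Before applying this, I will make sure the hypothesis $\eta=\psi(u_1,\dots,u_n)$ is meaningful locally, so that $\psi$ is holomorphic near each point $u(y)$ with $y\in Y_0$. For this I will show that $u=(u_1,\dots,u_n):Y_0\to\CC^n$ is a local biholomorphism, by factoring its Jacobian as $\partial u/\partial\tilde e\cdot\partial\tilde e/\partial y$.

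The factor $\partial\tilde e/\partial y$ equals $\partial e/\partial y$ up to the nonzero diagonal rescaling $\frac{(n-h)!}{n!}$. Its determinant is the Vandermonde $\pm\prod_{i<j}(y_i-y_j)$, which is nonzero on $Y_0$.

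The factor $\partial u/\partial\tilde e$ comes from the explicit formula in Fact~\ref{fact:I}. The only term of $u_k$ that is linear in $\tilde e_k$ comes from $\lambda=(k)$, with coefficient $(-1)^0\,0!/1!=1$. Every other $\lambda\vdash k$ has all parts $<k$, so it involves only $\tilde e_1,\dots,\tilde e_{k-1}$. Thus $u_k=\tilde e_k+(\text{polynomial in }\tilde e_1,\dots,\tilde e_{k-1})$, and the matrix is unitriangular.

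So $\det(\partial u/\partial y)\ne0$ on $Y_0$. The inverse function theorem then makes $(u_1,\dots,u_n)$ local holomorphic coordinates there. Consequently $\psi$, defined locally as $\eta\circ u^{-1}$, is holomorphic and its partials $\psi_{u_d}$ make sense.

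With this in place the conclusion is immediate. Substituting $D_d u_k=\delta_{d,k}$ from Fact~\ref{fact:I} into the chain-rule identity collapses the sum to $\psi_{u_d}$, giving $D_d\eta=\psi_{u_d}$ on $Y_0$. Symmetry of $\eta$ is not actually needed for the computation. It only guarantees that $\eta$ descends to a function of the invariant coordinates, so that the representation $\eta=\psi(u)$ exists in the first place.

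I expect the main, and essentially only, obstacle to be the local-coordinate step: the unitriangularity of $\partial u/\partial\tilde e$ read off from Fact~\ref{fact:I}, combined with the Vandermonde determinant. This is exactly what restricts the statement to the locus where the $y_i$ are pairwise distinct. The chain-rule step itself is routine.
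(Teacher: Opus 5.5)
Your proposal is correct and takes essentially the paper's route. The paper only recalls this fact from \cite{Ze} without proof, but its proof of the transferred analogue, Corollary~\ref{cor:cor12}, is your chain-rule computation: it writes the symmetrized operator as a vector field $\sum_i g_i\partial_i$ and collapses $\sum_j (\mathcal D_d U_j)\Phi_{U_j}$ using the duality $\mathcal D_dU_j=\delta_{d,j}$. Your check that $u$ is a local biholomorphism on the pairwise-distinct locus (Vandermonde times the unitriangular $\partial u/\partial\tilde e$) supplies what the paper only asserts, namely that the $u_k$ are local coordinates on $Y'/S_n$.
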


\begin{fact}[Theorem~16]\label{fact:K}
Every single derivative $\prod_{q=1}^d\partial_{y_{i_q}}$ of order $d$ (the indices $i_q$ possibly coinciding), applied to $u_k$ with $d\ne k$, vanishes at every point of the total diagonal $\{y_1=\cdots=y_n\}$.
\end{fact}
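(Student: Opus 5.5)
The plan is to combine the homogeneity of $u_k$ with its invariance under translation along the diagonal direction. By Fact~\ref{fact:I}, $u_k$ is a homogeneous polynomial of degree $k$ in $y_1,\dots,y_n$. Fix a multi-index of order $d$ and set $P:=\prod_{q=1}^d\partial_{y_{i_q}}u_k$. Then $P$ is either identically zero or homogeneous of degree $k-d$. This disposes of the case $d>k$ at once, since then $P\equiv0$. In particular, for $k=1$ we have $u_1=\tilde e_1=e_1/n$, which is linear, so every derivative of order $d\ge2$ vanishes identically. Here I read ``derivative of order $d$'' as having $d\ge1$. For $d=0$ and $k=1$ the statement would fail, because $u_1(t,\dots,t)=t$.

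Now take $k\ge2$ and $d<k$. The key step is to note that $D_1=1!\sum_{|I|=1}D_I=\sum_{i=1}^n\partial_{y_i}$, since for a singleton $I$ the denominator $\prod_{i\ne j\in I}(y_j-y_i)$ is an empty product. The duality $D_1u_k=\delta_{1,k}=0$ from Fact~\ref{fact:I} then says exactly that $u_k$ is annihilated by the Euler-type translation field $\sum_i\partial_{y_i}$. As $u_k$ is a polynomial, this means $u_k(y+t\mathbf 1)=u_k(y)$ for all $t\in\CC$, where $\mathbf 1=(1,\dots,1)$.

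The constant-coefficient operators $\partial_{y_i}$ commute with translation. Hence $P$ is translation invariant as well:
\[
P(t,\dots,t)=P(t\mathbf 1)=P(0).
\]
Since $P$ is homogeneous of degree $k-d\ge1$ (the case $d=0$ included, when $k\ge2$), we get $P(0)=0$. Therefore $P$ vanishes on the whole total diagonal $\{y_1=\cdots=y_n\}$, which proves the claim.

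I expect no serious obstacle here. The one point needing care is the identification $D_1=\sum_i\partial_{y_i}$ (the empty-product convention in the definition of $D_I$), together with the caveat about the degenerate case $(d,k)=(0,1)$, which the statement should be read as excluding. An alternative route, closer to \cite{Ze}, would expand $u_k$ via Fact~\ref{fact:I} and evaluate each $\tilde e_\lambda$ at the diagonal. That requires tracking the signed combinatorial sums and seems unnecessary given the translation-invariance argument.
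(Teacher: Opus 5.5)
Your proof is correct. It cannot follow the paper's proof, because the paper has none: Fact~\ref{fact:K} is imported from \cite{Ze} as a black box, since Section~\ref{sec:rappels} recalls these statements without proof. You instead derive it in a few lines from Fact~\ref{fact:I} alone, using just two of its ingredients.

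\begin{itemize}
\item Homogeneity of degree $k$ kills every derivative of order $d>k$, and makes any derivative of order $d<k$ vanish at $0$.
\item The single relation $D_1u_k=0$ for $k\ge2$ does the rest. Since $D_1=\sum_i\partial_{y_i}$ by the empty-product convention, it makes $u_k$ and all its constant-coefficient derivatives invariant under translation along $\mathbf 1$. Their values on the diagonal therefore equal their values at $0$.
\end{itemize}

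This route buys several things. The dependence on \cite{Ze} for this point reduces to Fact~\ref{fact:I}, which the paper needs anyway. Repeated and distinct indices are treated uniformly, which is exactly the mixed form invoked in the proof of Corollary~\ref{cor:mixed-U_k}. The case $d=0$ also recovers the vanishing $u_k(b,\dots,b)=0$ for $k\ge2$ in Fact~\ref{fact:L}. Your caveat that $(d,k)=(0,1)$ must be excluded is accurate, since $u_1(b,\dots,b)=b$.

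What your argument alone does not give is the value of the one surviving pure derivative $\partial_{y_i}^ku_k$. You only show it is constant, whereas Theorem~\ref{thm:16brut} needs its value. That value is equally cheap from Fact~\ref{fact:I}. Each $\tilde e_h$ is affine in $y_i$, so $\partial_{y_i}^k$ kills every $\tilde e_\lambda$ with $l(\lambda)<k$, and only $\lambda=(1^k)$ survives. This gives $\frac{(-1)^{k-1}(k-1)!}{k!}\cdot\frac{k!}{n^k}$, which is exactly the constant of Fact~\ref{fact:L}.

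A cosmetic point: $\sum_i\partial_{y_i}$ is the translation field. The name ``Euler-type'' usually refers to $\sum_iy_i\partial_{y_i}$.
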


\begin{fact}[Corollary~18 and Theorem~21 (normalization value)]\label{fact:L}
At the point of the total diagonal where $y_i=b$ for all $i$: $u_1=b$ and $u_k=0$ for $k\ge2$. The total diagonal is characterized algebraically, in $\mathrm{Sym}^n\CC$, by the vanishing of $\{u_k\}_{k=2}^n$. Moreover $\partial_{y_i}^k u_k$ depends neither on $b$ nor on $i$: it is a constant \emph{on all of $\CC^n$}, since a homogeneous polynomial of degree $0$ is necessarily constant, $u_k$ being homogeneous of degree $k$; and, in the normalization of \cite{Ze}, equals
\[
\partial_{y_i}^k u_k \;=\; \frac{(-1)^{k-1}(k-1)!}{n^k}.
\]
\end{fact}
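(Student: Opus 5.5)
The plan is to reduce all three assertions to the explicit formula of Fact~\ref{fact:I}, recast as a generating function. For $1\le k\le n$, the coefficients $(-1)^{l(\lambda)-1}(l(\lambda)-1)!/\prod_h m_h!$ are exactly those of the logarithm expansion. Grouping the terms of $\log(1+X)=\sum_{l\ge1}(-1)^{l-1}X^l/l$, with $X=\sum_{h=1}^n\tilde e_h t^h$, by partitions (multinomial coefficient $l!/\prod m_h!$) gives the formal identity
\[
\sum_{k=1}^n u_k\,t^k \;\equiv\; \log\Bigl(1+\sum_{h=1}^n \tilde e_h\,t^h\Bigr) \pmod{t^{n+1}} .
\]
I would check this identity first, since the rest of the argument rests on it.

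\emph{Values on the diagonal.} At $y_i=b$ for all $i$ we have $e_h=\binom nh b^h$, hence $\tilde e_h=\frac{(n-h)!}{n!}\binom nh b^h=b^h/h!$. So $1+\sum_{h\le n}\tilde e_h t^h\equiv e^{bt}\pmod{t^{n+1}}$, and its logarithm is $bt$ modulo $t^{n+1}$. This yields $u_1=b$ and $u_k=0$ for $2\le k\le n$.

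\emph{Characterization.} Conversely, suppose $u_k(y)=0$ for $2\le k\le n$, and set $c:=u_1(y)=e_1/n$. Exponentiating the identity gives $1+\sum_h\tilde e_h t^h\equiv e^{ct}\pmod{t^{n+1}}$, so $e_h(y)=\binom nh c^h$ for every $h$. Then $\prod_i(z-y_i)=(z-c)^n$, so all $y_i$ equal $c$ and the point lies on the total diagonal. Since the $u_k$ are symmetric, this is a statement in $\mathrm{Sym}^n\CC$.

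\emph{The constant $\partial_{y_i}^k u_k$.} Constancy is immediate, because $u_k$ is homogeneous of degree $k$. To compute it, I would note that each $e_h$ is affine in $y_i$. Consequently $\tilde e_\lambda$, a product of $l(\lambda)$ such factors, has degree at most $l(\lambda)$ in $y_i$, and $\partial_{y_i}^k\tilde e_\lambda=0$ unless $l(\lambda)=k$, which forces $\lambda=1^k$. The only surviving term is therefore $\frac{(-1)^{k-1}(k-1)!}{k!}\,\tilde e_1^k$ with $\tilde e_1=e_1/n$. Since $\partial_{y_i}^k(e_1/n)^k=k!/n^k$, this gives $(-1)^{k-1}(k-1)!/n^k$.

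The only step needing care is the generating-function identity, which is the standard logarithm expansion; everything else is a short computation.
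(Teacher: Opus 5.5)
Your proof is correct, but it takes a different route from the paper, which does not prove this statement. The paper imports the diagonal values, the characterization, and the value $(-1)^{k-1}(k-1)!/n^k$ from Corollary~18 and Theorem~21 of \cite{Ze}. Its only added argument is the homogeneity remark for constancy, which you also use.

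You instead derive all three claims from the explicit formula of Fact~\ref{fact:I}, rewritten as $\sum_{k=1}^n u_k t^k\equiv\log\bigl(1+\sum_{h=1}^n\tilde e_h t^h\bigr)\pmod{t^{n+1}}$. This is the same log/Bell-polynomial identity the paper records later, for $U_k$, in its analogue of Remark~11, so your key step is consistent with material already in the paper.

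What your route buys:
\begin{itemize}
\item On the diagonal, $\tilde e_h=b^h/h!$ turns the series into $e^{bt}$, which gives $u_1=b$ and $u_k=0$ for $k\ge2$ at once.
\item The converse follows by exponentiating and reading off $\prod_i(z-y_i)=(z-c)^n$.
\item The degree count in $y_i$ computes the constant directly. Each $e_h$ is affine in $y_i$, so only $\lambda=1^k$ survives $\partial_{y_i}^k$. This gives $\partial_{y_i}^k u_k=\frac{(-1)^{k-1}(k-1)!}{k!}\,\partial_{y_i}^k(e_1/n)^k$ identically on $\CC^n$, which makes the homogeneity argument unnecessary.
\end{itemize}
The citation route is shorter. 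Yours depends only on Fact~\ref{fact:I}, and it makes both the converse and the constant transparent.

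One small point on the characterization. What you prove is the set-theoretic equality $\{u_2=\cdots=u_n=0\}=\{\text{total diagonal}\}$, which is all the paper uses (Corollary~\ref{cor:diag}). If ``characterized algebraically'' is read ideal-theoretically, one more line is needed. Since $u_k=e_k+P_k(e_1,\dots,e_{k-1})$, the $u_1,\dots,u_n$ are polynomial coordinates on $\mathrm{Sym}^n\CC$. Hence $(u_2,\dots,u_n)$ is prime, and the Nullstellensatz upgrades your equality of sets to an equality of ideals.
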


\subsection{The transfer lemma and principle}\label{sec:transfert}

We can now build the substitution that carries these facts about $S_n$ over to $G(r,n)$. On $\Omega$, define the order-$1$ rational differential operator
\[
\Delta_i := \frac{1}{r\,x_i^{r-1}}\,\partial_{x_i}, \qquad 1\le i\le n.
\]

\begin{lemma}[Transfer lemma]\label{lem:transfert}
Let $x^0\in\Omega$ and let $\eta$ be a holomorphic function near $y^0:=(x^0)^r$. Set $\phi(x):=\eta(x^r)$ near $x^0$ (with $x^r:=(x_1^r,\dots,x_n^r)$). Then, for every $i$,
\[
\Delta_i \phi = (\partial_{y_i}\eta)(x^r).
\]
\end{lemma}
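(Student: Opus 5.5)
The plan is a direct chain-rule computation. Everything is local near $x^0\in\Omega$, so no global issues arise.

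First, fix the domain. Since $\eta$ is holomorphic on a neighbourhood $V$ of $y^0=(x^0)^r$, and the map $p:x\mapsto x^r=(x_1^r,\dots,x_n^r)$ is holomorphic and continuous, there is a neighbourhood $U$ of $x^0$ with $p(U)\subseteq V$. Shrinking $U$, we may also assume $U\subseteq\Omega$, since $\Omega$ is open. On $U$ the composite $\phi=\eta\circ p$ is holomorphic, and $\Delta_i$ is defined by its original formula because $x_i\ne0$ there.

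Second, apply the holomorphic chain rule to $\phi=\eta\circ p$:
\[
\partial_{x_i}\phi(x)=\sum_{j=1}^n(\partial_{y_j}\eta)(x^r)\,\partial_{x_i}(x_j^r).
\]
Since the $j$-th component of $p$ depends only on $x_j$, we have $\partial_{x_i}(x_j^r)=\delta_{ij}\,r x_i^{r-1}$. So only the $j=i$ term survives, and
\[
\partial_{x_i}\phi=r x_i^{r-1}(\partial_{y_i}\eta)(x^r).
\]

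Third, divide by $r x_i^{r-1}$. This is legitimate on $U\subseteq\Omega$ because $x_i\neq0$ there; for $r=1$ the factor is simply $1$. The division gives $\Delta_i\phi=(\partial_{y_i}\eta)(x^r)$, which is the claim.

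There is no real obstacle. The only point needing care is the first step: choosing the neighbourhood so that the composite is defined and the coefficient $1/(rx_i^{r-1})$ is holomorphic. This is why the hypothesis $x^0\in\Omega$ is required, and it is the reason the later extension across $x_i=0$ (Theorem~\ref{thm:origine}) needs a separate argument.
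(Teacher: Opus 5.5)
Your proof is correct and is essentially the paper's argument. The chain rule gives $\partial_{x_i}\phi = r x_i^{r-1}(\partial_{y_i}\eta)(x^r)$, because $x^r$ depends on $x_i$ only through $y_i$, and dividing by $r x_i^{r-1}$, which is nonzero on $\Omega$, concludes; your choice of a neighbourhood $U\subseteq\Omega$ with $p(U)$ inside the domain of $\eta$ only makes explicit what the paper leaves implicit.
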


\begin{proof}
By the chain rule, $\partial_{x_i}\phi(x) = r x_i^{r-1}\,(\partial_{y_i}\eta)(x^r)$ (only the $i$-th variable of $\eta$ is affected, since $x^r=(x_1^r,\dots,x_n^r)$ depends on $x_i$ only through $y_i$). Dividing by $rx_i^{r-1}$ gives the statement.
\end{proof}

\begin{example}[Direct verification for $\Delta_i^2$]\label{ex:delta2}
Lemma~\ref{lem:transfert} gives, in particular, $\Delta_i(\Delta_i\phi)=(\partial_{y_i}^2\eta)(x^r)$ (applying it a second time to $\phi^{(1)}:=\Delta_i\phi=\eta^{(1)}(x^r)$, $\eta^{(1)}:=\partial_{y_i}\eta$). It is instructive to check this by a direct computation: writing $c(x_i):=\frac{1}{r}x_i^{1-r}$, we have $\Delta_i\phi=c\,\partial_i\phi$, hence
\[
\Delta_i^2\phi = c\,\partial_i(c\,\partial_i\phi) = c^2\,\partial_i^2\phi + c\,c'\,\partial_i\phi.
\]
Setting $\eta^{(1)}=\partial_{y_i}\eta$, $\eta^{(2)}=\partial_{y_i}^2\eta$ (evaluated at $x^r$), we have $\partial_i\phi = rx_i^{r-1}\eta^{(1)}$ and $\partial_i^2\phi = r(r-1)x_i^{r-2}\eta^{(1)}+r^2x_i^{2r-2}\eta^{(2)}$ (chain rule applied twice). Expanding then gives
\[
c^2\partial_i^2\phi = \frac{r-1}{r\,x_i^r}\eta^{(1)} + \eta^{(2)}, \qquad c\,c'\,\partial_i\phi = -\frac{r-1}{r\,x_i^r}\eta^{(1)},
\]
whose sum is exactly $\eta^{(2)}(x^r)$, confirming that the correction terms cancel exactly, as announced by Lemma~\ref{lem:transfert}.
\end{example}

Denote by $\mathcal L$ the class of operators, acting on holomorphic functions of $y=(y_1,\dots,y_n)$, built inductively from the derivations $\partial_{y_i}$, multiplications by rational functions, and transpositions of variables, by:
\begin{itemize}
\item[(i)] (base operators) $\partial_{y_i}$ for $1\le i\le n$, and multiplication by a rational function $q(y)\in\CC(y_1,\dots,y_n)$;
\item[(ii)] (transposition) $s_{ij}$, $1\le i<j\le n$, exchanging the variables $y_i,y_j$, acting by $\eta\mapsto s_{ij}\eta:=\eta\circ s_{ij}$;
\item[(iii)] (sum) if $L_1,L_2\in\mathcal L$, then $L_1+L_2\in\mathcal L$;
\item[(iv)] (composition) if $L_1,L_2\in\mathcal L$, then $L_1\circ L_2\in\mathcal L$ (acting by $\eta\mapsto L_1(L_2\eta)$; this includes the ordinary product when $L_2$ is a multiplication).
\end{itemize}
To each $L\in\mathcal L$ we associate its \emph{transfer} $\widetilde L$, obtained by replacing everywhere $\partial_{y_i}\to\Delta_i$, $q(y)\to q(x^r)$ (multiplication by the transferred rational function), and $s_{ij}\to\sigma_{ij}$ (transposition of $x_i,x_j$), with sums and compositions preserved.

\begin{proposition}[Transfer principle]\label{prop:transfert}
For every $L\in\mathcal L$ and every $\eta$ holomorphic on an open set containing $\sigma(y^0)$ for every $\sigma$ in the subgroup of $S_n$ generated by the transpositions $s_{ij}$ occurring in the construction of $L$ (in particular $y^0=(x^0)^r$ itself, taking $\sigma=\mathrm{id}$), and large enough for $L\eta$ to be defined (an automatic condition if $\eta$ is holomorphic on all of $\CC^n$, as is the case in every application in this note, where $\eta$ is a polynomial or a global rational function), we have, for $\phi:=\eta(x^r)$,
\[
\widetilde L\,\phi = (L\eta)(x^r).
\]
\end{proposition}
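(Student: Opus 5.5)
We argue by structural induction on the construction of $L\in\mathcal L$, following clauses (i)--(iv). The claim proved at each stage is the displayed identity $\widetilde L\,\phi=(L\eta)(x^r)$, valid for \emph{every} admissible $\eta$ (holomorphic on the relevant neighbourhoods of the $\sigma(y^0)$). Quantifying over all $\eta$, rather than fixing one, is what makes the composition step go through.

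\emph{Base cases.} For $L=\partial_{y_i}$ the identity is exactly Lemma~\ref{lem:transfert}. For multiplication by $q\in\CC(y)$, we have $\widetilde L\phi=q(x^r)\,\eta(x^r)=(q\eta)(x^r)$ trivially, wherever $q$ is defined. For a transposition $s_{ij}$, note that $(x^r)\circ\sigma_{ij}=\sigma_{ij}$ applied to $x^r$, because raising to the $r$-th power is done coordinatewise and commutes with permuting coordinates. Hence $\sigma_{ij}\phi(x)=\eta\big((\sigma_{ij}x)^r\big)=\eta\big(s_{ij}(x^r)\big)=(s_{ij}\eta)(x^r)$. This is the point where the hypothesis that $\eta$ is holomorphic near $s_{ij}(y^0)$ is used: it guarantees that $\phi$ is defined near $\sigma_{ij}x^0$.

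\emph{Inductive steps.} Sums are immediate by linearity of both sides. For a composition $L=L_1\circ L_2$, first apply the induction hypothesis to $L_2$ and $\eta$, which gives $\widetilde{L_2}\phi=(L_2\eta)(x^r)$. Then set $\eta':=L_2\eta$ and apply the hypothesis for $L_1$ to $\eta'$:
\[
\widetilde{L_1}\big((L_2\eta)(x^r)\big)=(L_1L_2\eta)(x^r).
\]
Since $\widetilde{L_1\circ L_2}=\widetilde{L_1}\circ\widetilde{L_2}$ by the definition of the transfer, this gives the claim.

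\emph{Domain bookkeeping (the main obstacle).} The algebra above is trivial; the delicate step is checking that $\eta'=L_2\eta$ is again admissible for $L_1$. Concretely, $\eta'$ must be holomorphic near $\sigma(y^0)$ for all $\sigma$ in the subgroup generated by the transpositions of $L_1$, and $\widetilde{L_2}\phi$ must be well defined near the corresponding points $x$. I would handle this by strengthening the induction hypothesis. Let $H_L$ be the subgroup generated by all transpositions occurring in $L$, and note $H_{L_1},H_{L_2}\subseteq H_L$. Require $\eta$ to be holomorphic on an $H_L$-stable open set $U$ avoiding the poles of the rational functions in $L$ (shrinking $U$ if necessary). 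Then $L\eta$ and every intermediate $L_2\eta$ are holomorphic on $U$, since derivatives, products by functions regular on $U$, and $s_{ij}\in H_L$ all preserve holomorphy on an $H_L$-stable set. On the $x$-side, the preimage $\{x\in\Omega: x^r\in U\}$ is stable under the $\sigma_{ij}$ in question, so all the transferred operators act on a common domain. In the applications $\eta$ is polynomial or globally rational, so this reduces to avoiding a proper Zariski-closed set. The identity then holds on a dense open set, and hence, by the identity theorem, wherever both sides are defined.
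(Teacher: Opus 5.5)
Your proposal is correct and follows essentially the same structural induction as the paper's proof: the $\partial_{y_i}$ base case via Lemma~\ref{lem:transfert}, the trivial multiplication case, the identity $(\sigma_{ij}x)^r=s_{ij}(x^r)$ for transpositions, linearity for sums, and, for compositions, the hypothesis for $L_1$ applied to $L_2\eta$. Your bookkeeping with an $H_L$-stable open set is a more systematic packaging of the paper's subgroup hypothesis, whose necessity the paper explains separately in Remark~\ref{rem:holomorphy}.
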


\begin{proof}
Structural induction on the construction of $L$.

\emph{Base case.} If $L=\partial_{y_i}$, this is Lemma~\ref{lem:transfert}. If $L=q(y)$ (multiplication), then $\widetilde L\phi = q(x^r)\,\eta(x^r) = (q\eta)(x^r) = (L\eta)(x^r)$, trivially. If $L=s_{ij}$ (transposition), then $\widetilde L=\sigma_{ij}$ and, for $\hat x:=\sigma_{ij}\cdot x$ (the coordinates $x_i,x_j$ exchanged), we have $\hat x^r = s_{ij}(x^r)$ (exchanging $x_i,x_j$ and then raising to the power $r$ gives the same result as raising to the power $r$ and then exchanging $y_i=x_i^r,y_j=x_j^r$); $\phi(\hat x)=\eta(\hat x^r)$ is defined near $x^0$ because $\eta$ is, by hypothesis, holomorphic near $s_{ij}(y^0)$. Hence $\widetilde L\phi = \sigma_{ij}\phi = \phi(\hat x) = \eta(\hat x^r) = \eta(s_{ij}(x^r)) = (s_{ij}\eta)(x^r) = (L\eta)(x^r)$.

\emph{Sum.} If $L=L_1+L_2$ and the statement holds for $L_1,L_2$, then by linearity of $\widetilde L=\widetilde{L_1}+\widetilde{L_2}$ and of $L=L_1+L_2$:
\[
\widetilde L\phi = \widetilde{L_1}\phi+\widetilde{L_2}\phi = (L_1\eta)(x^r)+(L_2\eta)(x^r) = ((L_1+L_2)\eta)(x^r) = (L\eta)(x^r).
\]

\emph{Composition.} If $L=L_1\circ L_2$ and the statement holds for $L_1$ and for $L_2$ (applied to \emph{any} admissible function, in particular to $L_2\eta$), set $\eta_2:=L_2\eta$ and $\phi_2:=\eta_2(x^r)$. By the induction hypothesis applied to $L_2$: $\widetilde{L_2}\phi=(L_2\eta)(x^r)=\phi_2$. By the induction hypothesis applied to $L_1$ (with $\eta_2$ in place of $\eta$): $\widetilde{L_1}\phi_2 = (L_1\eta_2)(x^r) = (L_1(L_2\eta))(x^r)=(L\eta)(x^r)$. But $\widetilde{L_1}\phi_2 = \widetilde{L_1}(\widetilde{L_2}\phi) = (\widetilde{L_1}\circ\widetilde{L_2})\phi = \widetilde L\,\phi$ by definition of the transfer of a composition. Hence $\widetilde L\phi=(L\eta)(x^r)$.

These four cases exhaust the inductive construction of $\mathcal L$, which completes the induction.
\end{proof}

\begin{remark}[On the holomorphy hypothesis]\label{rem:holomorphy}
The hypothesis must hold at $\sigma(y^0)$ for \emph{every} $\sigma$ in the subgroup generated by the transpositions occurring in $L$, not merely at $s_{ij}(y^0)$ for each such transposition individually: for $L=s_{12}\circ s_{23}$, say, the Composition step of the proof applies the induction hypothesis to $\eta_2:=s_{23}\eta$ in place of $\eta$ for $L_1=s_{12}$, which requires $\eta_2$ holomorphic near $s_{12}(y^0)$, i.e.\ $\eta$ holomorphic near $s_{23}(s_{12}(y^0))$, a composite point not covered by evaluating each transposition individually at $y^0$. Since the generated subgroup is closed under products, any such composite point, at any depth of the induction, already lies in the required domain. This refinement affects only the fully general statement of the proposition: every application of the transfer principle in this note ($\mathcal D_I$, $\delta_{ij}\mathcal D_I$, $\mathcal D_d$) involves at most a single transposition, for which the two formulations coincide, so none of the results below are affected.
\end{remark}

\begin{remark}\label{rem:transfertusage}
The operators $D_I$, $D_d$, and $\delta_{ij}$ of Section~1 of \cite{Ze} belong to $\mathcal L$: they are sums of compositions of finitely many $\partial_{y_i}$, transpositions $s_{ij}$ (for $\delta_{ij}$), and multiplications by rational functions of $y$. Proposition~\ref{prop:transfert} therefore applies to each of them, for every admissible function $\eta$, which rigorously justifies every transfer performed in what follows.

In particular, applied to $L=D_d\in\mathcal L$ and $\eta=u_k$ (Fact~\ref{fact:I}), it gives $\mathcal D_d\big(u_k(x^r)\big) = (D_du_k)(x^r) = \delta_{d,k}(x^r)=\delta_{d,k}$: the function $x\mapsto u_k(x^r)$, which is $G(r,n)$-invariant and homogeneous of degree $rk$ in $x$ (since $u_k$ is $S_n$-invariant and homogeneous of degree $k$ in $y$), thus satisfies exactly the characteristic property of $U_k$ stated in Theorem~\ref{thm:main}. By the uniqueness established in that theorem, it follows that
\[
U_k(x) = u_k(x^r) \qquad (1\le k\le n),
\]
an identity that will be used in Section~\ref{sec:diagonale}.
\end{remark}

\begin{remark}[Summary of notation]\label{rem:notation}
For reference, the differential operators used throughout this note are, in the $S_n$/$y$-side notation of \cite{Ze} (Section~\ref{sec:rappels}) on the left, and their $G(r,n)$/$x$-side transfers (from Section~\ref{sec:generators} on) on the right:
\begin{center}
\small
\begin{tabular}{@{}p{0.44\linewidth}p{0.5\linewidth}@{}}
$y$-side (Section~\ref{sec:rappels}) & $x$-side transfer \\ \hline
$\partial_{y_i}$, first-order derivative & $\Delta_i:=\dfrac{1}{rx_i^{r-1}}\partial_{x_i}$ (Section~\ref{sec:intro}) \\[2pt]
$D_I=\sum_{i\in I}\partial_{y_i}/\prod_{i\ne j\in I}(y_j-y_i)$ & $\mathcal D_I$ (Lemma~\ref{lem:lem3}) \\[2pt]
$D_d=d!\sum_{|I|=d}D_I$, symmetrized order $d$ & $\mathcal D_d=d!\sum_{|I|=d}\mathcal D_I$ (Prop.~\ref{prop:leibniz} on) \\[2pt]
$\delta_{ij}\eta=(\eta-s_{ij}\eta)/(y_i-y_j)$, divided difference & transferred via $\mathcal D_I\leftrightarrow\mathcal D_J$ (Section~\ref{sec:generators}) \\[2pt]
$u_k$, $e_h$, $\widetilde e_h$: dual/elementary/normalized coordinates & $U_k$, $E_h$, $\widetilde E_h$, via $U_k(x)=u_k(x^r)$
\end{tabular}
\end{center}
The operators $\partial_{H_\alpha}^d$ of Fact~\ref{fact:M} (Section~\ref{sec:trois-points}), restricted to a block of variables, and their transfer $\Delta_{H_\alpha}^d$, follow the same pattern with $n$ replaced by $|H_\alpha|$; the null-block operator $\Delta_{i_0}$ of Theorem~\ref{thm:origine} has no $y$-side or $S_n$ counterpart, being specific to $G(r,n)$.
\end{remark}

\section{Generators \texorpdfstring{$\mathcal D_I$}{DI} and divided-difference relations}\label{sec:generators}

The first family of operators to put the transfer principle to work on is the $D_I$ from Section~\ref{sec:rappels}; their transfer to $G(r,n)$ underlies everything that follows.

\subsection{Generators and the operators \texorpdfstring{$\mathcal D_I$}{DI}}

For $I\subseteq\NN_n$, we generalize the notation $E_h=E_h(x_{\NN_n})$ of Section~\ref{sec:rappels} by setting $E_h(x_I):=e_h\big((x_i^r)_{i\in I}\big)$, in agreement with the notation $e_h(y_I)$ of Fact~\ref{fact:B}.

\begin{lemma}\label{lem:lem2}
For $I\subseteq\NN_n$ and $1\le h\le n$,
\[
E_h(x_{\NN_n}) = \sum_{l=0}^h E_l(x_I)\,E_{h-l}(x_{I^c}).
\]
\end{lemma}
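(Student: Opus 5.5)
The plan is to obtain Lemma~\ref{lem:lem2} directly from Fact~\ref{fact:B} by the substitution $y_i=x_i^r$, i.e.\ by applying the ring morphism $\sigma:y_i\mapsto x_i^r$ already used in the proof of Corollary~\ref{cor:base}. Fact~\ref{fact:B} is a polynomial identity in $\ZZ[y_1,\dots,y_n]$:
\[
e_h(y_{\NN_n}) = \sum_{l=0}^h e_l(y_I)\,e_{h-l}(y_{I^c}).
\]
Since $\sigma$ is a ring homomorphism, applying it to both sides preserves sums and products and yields an identity in $\ZZ[x]$.

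The only thing to check is that $\sigma$ sends each factor to the claimed object. By definition, $e_l(y_I)$ is the $l$-th elementary symmetric polynomial in the variables $(y_i)_{i\in I}$. Hence $\sigma(e_l(y_I))=e_l\big((x_i^r)_{i\in I}\big)=E_l(x_I)$, in accordance with the notation fixed just before the lemma. The same reasoning gives $\sigma(e_{h-l}(y_{I^c}))=E_{h-l}(x_{I^c})$ and $\sigma(e_h(y_{\NN_n}))=E_h(x_{\NN_n})$.

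I would also record the boundary conventions, which are the only potential subtlety. First, $E_0:=1$. Second, $E_l(x_I)=0$ when $l>|I|$; this includes the case $I=\emptyset$ with $l\ge1$, and the symmetric case for $I^c$. These conventions match those of $e_l(y_I)$ in Fact~\ref{fact:B}, so that terms with $l>|I|$ or $h-l>|I^c|$ vanish identically on both sides.

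As an optional sanity check, independent of \cite{Ze}, I would note the generating-function argument. In $\ZZ[y][t]$ we have
\[
\prod_{i\in\NN_n}(1+y_it)=\prod_{i\in I}(1+y_it)\prod_{i\in I^c}(1+y_it),
\]
and comparing the coefficients of $t^h$ gives Fact~\ref{fact:B}; substituting $y_i=x_i^r$ gives the lemma directly. There is no real obstacle here. The main care point is the conventions for $E_0$ and for out-of-range indices, which must agree with those of Fact~\ref{fact:B}.
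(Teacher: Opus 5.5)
Your proposal is correct and matches the paper's proof: Fact~\ref{fact:B} is a polynomial identity, and substituting $y=x^r$ (equivalently, applying the ring morphism $y_i\mapsto x_i^r$) turns each $e_l(y_J)$ into $E_l(x_J)$ for $J=\NN_n,I,I^c$, without any need for the transfer principle. Your remarks on the conventions $E_0=1$ and $E_l(x_J)=0$ for $l>|J|$, and your generating-function check, are sound additions that the paper leaves implicit.
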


\begin{proof}
Fact~\ref{fact:B} is a polynomial identity valid for every $y\in\CC^n$, hence in particular at the point $y=x^r$; since no differential operator is involved, it suffices to substitute $y=x^r$ directly (without invoking Proposition~\ref{prop:transfert}, which is reserved for the transfer of operators). For any subset $J\subseteq\NN_n$, $e_h(y_J)|_{y=x^r}=e_h\big((x_i^r)_{i\in J}\big)=E_h(x_J)$ by the definition above; applied successively to $J=\NN_n$, $I$, and $I^c$, this turns the identity of Fact~\ref{fact:B} into the announced identity.
\end{proof}

\begin{lemma}\label{lem:lem3}
Let $I\subseteq\NN_n$ be nonempty, $d:=|I|$. The operator
\[
\mathcal D_I \;:=\; \sum_{i\in I} \frac{\Delta_i}{\displaystyle\prod_{i\neq j\in I}(x_j^r-x_i^r)}
\]
sends $E_l(x_I)$ to $\delta_{l,d}$.
\end{lemma}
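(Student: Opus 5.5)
The plan is to deduce the statement from Fact~\ref{fact:C} by the transfer principle (Proposition~\ref{prop:transfert}), exactly as Remark~\ref{rem:transfertusage} anticipates.

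First, we observe that the $y$-side operator $D_I=\sum_{i\in I}\partial_{y_i}/\prod_{i\ne j\in I}(y_j-y_i)$ belongs to $\mathcal L$. Each summand is the composition of the derivation $\partial_{y_i}$ with multiplication by the rational function $q_i(y):=\prod_{i\ne j\in I}(y_j-y_i)^{-1}$, placed on the left. The operator $D_I$ is then a finite sum of such compositions, so it lies in $\mathcal L$ by rules (i), (iii) and (iv). No transposition occurs in it, so the holomorphy condition of Remark~\ref{rem:holomorphy} reduces to holomorphy of $\eta$ near $y^0$ alone. Next, we compute the transfer $\widetilde{D_I}$ term by term: $\partial_{y_i}\mapsto\Delta_i$ and $q_i(y)\mapsto q_i(x^r)=\prod_{i\ne j\in I}(x_j^r-x_i^r)^{-1}$. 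This gives exactly the operator $\mathcal D_I$ of the statement. It is defined on the open set of $\Omega$ where $x_i^r\ne x_j^r$ for $i\ne j\in I$.

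Then we take $\eta:=e_l(y_I)$, which is a polynomial and hence holomorphic on all of $\CC^n$. Its pullback is $\phi=\eta(x^r)=E_l(x_I)$ by the definition preceding Lemma~\ref{lem:lem2}. Proposition~\ref{prop:transfert} gives
\[
\mathcal D_I E_l(x_I)=\widetilde{D_I}\phi=(D_I e_l(y_I))(x^r)=\delta_{l,d}
\]
by Fact~\ref{fact:C}, since a constant is unchanged by the substitution $y=x^r$. This holds at every point where $\mathcal D_I$ is defined. The identity is between rational functions, so it holds as an identity of rational functions.

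There is essentially no obstacle. The only care needed is in the domain bookkeeping. We must check that the transfer of a \emph{left} multiplication composed with $\partial_{y_i}$ is $q_i(x^r)\Delta_i$ and not $\Delta_i\circ q_i(x^r)$; this is immediate from the definition of the transfer of a composition. We must also restrict to points of $\Omega$ off the hypersurfaces $x_i^r=x_j^r$, where both sides are defined. As a sanity check, the case $d=1$ should reduce to $\Delta_i(x_i^r)=1$, which holds directly.
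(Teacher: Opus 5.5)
Your proposal is correct and follows the paper's proof essentially verbatim: it recognizes $D_I$ as an element of $\mathcal L$ whose transfer is $\mathcal D_I$, applies Proposition~\ref{prop:transfert} to $\eta=e_l(y_I)$, and concludes with Fact~\ref{fact:C}. Your extra domain bookkeeping is accurate: no transpositions occur, the left multiplication transfers to $q_i(x^r)\Delta_i$, and one restricts to points of $\Omega$ off the hypersurfaces $x_i^r=x_j^r$. It simply makes explicit what the paper leaves implicit.
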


\begin{proof}
$D_I=\sum_{i\in I}\partial_{y_i}/\prod_{i\ne j\in I}(y_j-y_i)\in\mathcal L$ (a sum of compositions of $\partial_{y_i}$ with multiplications by rational functions), and its transfer is exactly $\mathcal D_I$ (since $y_j-y_i\to x_j^r-x_i^r$). By Proposition~\ref{prop:transfert} applied to $\eta=e_l(y_I)$, and Fact~\ref{fact:C} ($D_Ie_l(y_I)=\delta_{l,d}$, a constant), we obtain $\mathcal D_I E_l(x_I) = \delta_{l,d}(x^r)=\delta_{l,d}$.
\end{proof}

\begin{corollary}\label{cor:cor4}
$\mathcal D_I$ sends $E_h=E_h(x_{\NN_n})$ to $E_{h-d}(x_{I^c})$ if $h\ge d$, and to $0$ if $h<d$ (with $E_l:=0$ for $l<0$).
\end{corollary}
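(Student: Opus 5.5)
The plan is to derive Corollary~\ref{cor:cor4} from Fact~\ref{fact:D} through the transfer principle, in the same way Lemma~\ref{lem:lem3} was derived from Fact~\ref{fact:C}. As noted in the proof of Lemma~\ref{lem:lem3}, the operator $D_I$ lies in $\mathcal L$ and its transfer $\widetilde{D_I}$ is exactly $\mathcal D_I$. We take $\eta:=e_h(y_{\NN_n})$. It is a polynomial, hence holomorphic on all of $\CC^n$, so the holomorphy hypothesis of Proposition~\ref{prop:transfert} holds automatically. Moreover, $D_I$ involves no transpositions, so Remark~\ref{rem:holomorphy} plays no role. With $\phi:=\eta(x^r)=E_h$, Proposition~\ref{prop:transfert} then gives $\mathcal D_I E_h=(D_I e_h)(x^r)$ on $\Omega$, away from the poles $x_i^r=x_j^r$, $i\ne j\in I$.

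Next we insert Fact~\ref{fact:D}: $D_Ie_h=e_{h-d}(y_{I^c})$ when $h\ge d$, and $D_Ie_h=0$ when $h<d$. We then evaluate at $y=x^r$ using the convention of Section~\ref{sec:generators}, namely $e_l(y_J)|_{y=x^r}=E_l(x_J)$, here with $J=I^c$. This gives $\mathcal D_IE_h=E_{h-d}(x_{I^c})$ for $h\ge d$ and $0$ otherwise, and the convention $E_l:=0$ for $l<0$ matches the corresponding one on the $y$-side.

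As an optional independent cross-check, which also avoids relying on Fact~\ref{fact:D} as a black box, we can argue directly. Expand $E_h$ by Lemma~\ref{lem:lem2}. Each factor $E_{h-l}(x_{I^c})$ depends only on the variables $x_j$ with $j\notin I$, and $\mathcal D_I$ differentiates only in the $x_i$ with $i\in I$, with coefficients that are functions of the $x$'s. Hence $\mathcal D_I$ is $E_{h-l}(x_{I^c})$-linear, and applying Lemma~\ref{lem:lem3} term by term leaves only the $l=d$ term. The only points needing care are this commutation with the $I^c$-factors and the boundary case $h<d$, where no $l=d$ term occurs since $l\le h$. We expect no real obstacle beyond stating clearly the domain of validity, namely $\Omega$ minus the poles.
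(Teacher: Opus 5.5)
Your proof is correct, but your main route differs from the paper's. You transfer the conclusion of Zemel's Corollary~4 (Fact~\ref{fact:D}) directly, via Proposition~\ref{prop:transfert} applied to $L=D_I$ and $\eta=e_h$. The paper never invokes Fact~\ref{fact:D} here. Its proof is exactly your ``optional cross-check'': expand $E_h$ by Lemma~\ref{lem:lem2}, pass the factors $E_{h-l}(x_{I^c})$ through $\mathcal D_I$ (each $\Delta_i$, $i\in I$, obeys the Leibniz rule and annihilates functions independent of $x_i$), and apply Lemma~\ref{lem:lem3}, so that only the term $l=d$ survives.

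Your one-line transfer is legitimate. No transpositions occur, $\eta$ is a polynomial, and the resulting identity of rational functions extends across the poles. It also matches the paper's own alternative justification of Proposition~\ref{prop:prop6}, which transfers Fact~\ref{fact:F} directly. What it costs is one more black-box appeal to Zemel's results.

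The paper's route needs only two inputs. One is Fact~\ref{fact:B}, used by plain substitution with no operator transfer. The other is Fact~\ref{fact:C}, already transferred in Lemma~\ref{lem:lem3}. It thus reruns Zemel's own derivation of Corollary~4 on the $x$-side, and makes explicit the only step that genuinely needs checking there: the linearity of $\mathcal D_I$ over functions of $x_{I^c}$.
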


\begin{proof}
Combine Lemma~\ref{lem:lem2} and Lemma~\ref{lem:lem3}. Each $\Delta_i$ ($i\in I$) satisfies the ordinary Leibniz rule (it is $\partial_{x_i}$ up to a factor, cf.\ Example~\ref{ex:delta2}) and annihilates any factor independent of $x_i$; since $E_{h-l}(x_{I^c})$ does not depend on the $x_i,\,i\in I$, applying $\mathcal D_I$ to the decomposition of Lemma~\ref{lem:lem2} gives
\[
\mathcal D_I E_h = \sum_{l=0}^h \big(\mathcal D_I E_l(x_I)\big)\,E_{h-l}(x_{I^c}) = \sum_{l=0}^h \delta_{l,d}\,E_{h-l}(x_{I^c})
\]
by Lemma~\ref{lem:lem3}. Only the term $l=d$ contributes: if it occurs in the sum (i.e.\ $h\ge d$), we obtain $E_{h-d}(x_{I^c})$; otherwise ($h<d$), every term is zero.
\end{proof}

\subsection{Divided-difference relations}

A divided-difference relation links the transferred generators $\mathcal D_I$, letting us pass from one to another; that relation comes next. For $i\neq j$, let $\sigma_{ij}$ denote the transposition of $x_i,x_j$ (an element of $G(r,n)$, with trivial torsion); it induces on $y=x^r$ the exchange $y_i\leftrightarrow y_j$. For any function $\phi$ of the $x$'s, not necessarily invariant, set
\[
\delta_{ij}\phi := \frac{\phi - \sigma_{ij}\phi}{x_i^r - x_j^r}.
\]

\begin{proposition}\label{prop:prop5}
Let $I\subseteq\NN_n$, $i\notin I$, $j\in I$, $J:=I\cup\{i\}$. For every $G(r,n)$-invariant holomorphic function $\phi$ on $\Omega'$ (Lemma~\ref{lem:descent}),
\[
\delta_{ij}\,\mathcal D_I\phi = \mathcal D_J\phi .
\]
\end{proposition}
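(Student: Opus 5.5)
The plan is to reduce the statement to Fact~\ref{fact:E} via the holomorphic descent of Lemma~\ref{lem:descent} and the transfer principle (Proposition~\ref{prop:transfert}).

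\emph{Descent.} First, given a $G(r,n)$-invariant holomorphic $\phi$ on $\Omega'$, Lemma~\ref{lem:descent} writes it as $\phi(x)=\eta(x^r)$, with $\eta$ an $S_n$-invariant holomorphic function on $Y'$. Since $Y'$ is stable under $S_n$, $\eta$ is holomorphic near every permuted point $\sigma(y^0)$ with $y^0=(x^0)^r$. This is exactly the holomorphy hypothesis of Proposition~\ref{prop:transfert}, in the form refined by Remark~\ref{rem:holomorphy}.

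\emph{Operators in $\mathcal L$.} Next I will exhibit both sides as transfers of operators in $\mathcal L$.
\begin{itemize}
\item $D_I$ and $D_J$ lie in $\mathcal L$, and their transfers are $\mathcal D_I$ and $\mathcal D_J$. This is the same observation as in the proof of Lemma~\ref{lem:lem3}.
\item The composite $\delta_{ij}\circ D_I$ equals $q\cdot(\mathrm{id}-s_{ij})\circ D_I$ with $q=1/(y_i-y_j)$, so it also lies in $\mathcal L$.
\item Its transfer is $\frac{1}{x_i^r-x_j^r}(\mathrm{id}-\sigma_{ij})\circ\mathcal D_I$, which is precisely $\delta_{ij}\mathcal D_I$ as defined above the proposition.
\end{itemize}
Only the single transposition $s_{ij}$ occurs here, and $\eta$ is holomorphic at both $y^0$ and $s_{ij}(y^0)$ because $Y'$ is $S_n$-stable.

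\emph{Conclusion.} Applying Proposition~\ref{prop:transfert} to both operators gives
\[
\delta_{ij}\mathcal D_I\phi=\big(\delta_{ij}D_I\eta\big)(x^r)
\qquad\text{and}\qquad
\mathcal D_J\phi=(D_J\eta)(x^r).
\]
Fact~\ref{fact:E}, applied to the symmetric function $\eta$, identifies the two right-hand sides on $Y'$. The two left-hand sides therefore agree on $\Omega'$.

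\emph{Main obstacle.} I expect the delicate point to be the validity of Fact~\ref{fact:E} for a general holomorphic symmetric $\eta$ on the open set $Y'$, rather than for polynomials. I will take Zemel's statement as holding pointwise wherever the $y$'s are distinct, since its proof is a rational-function identity in the first derivatives of $\eta$ combined with the symmetry $s_{ij}\eta=\eta$. If that reading is in doubt, I would reprove Fact~\ref{fact:E} directly in two steps. First, use $\eta\circ s_{ij}=\eta$, so that $s_{ij}\partial_{y_k}\eta$ equals $\partial_{y_{s_{ij}(k)}}\eta$. Second, verify the resulting identity among rational coefficients: a partial-fraction identity in $y_i,y_j$ and the $y_k$, $k\in I$. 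A further point to check is that $\delta_{ij}$ is applied to $\mathcal D_I\phi$, which is not invariant, so the transposition must act on a non-invariant function. This is handled because $\mathcal L$ allows transpositions acting on arbitrary functions.
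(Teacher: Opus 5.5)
Your proposal is correct and follows the paper's proof essentially step for step. You descend $\phi=\eta(x^r)$ via Lemma~\ref{lem:descent}, observe that $\delta_{ij}D_I=\frac{1}{y_i-y_j}(\mathrm{id}-s_{ij})\circ D_I$ and $D_J$ lie in $\mathcal L$ with transfers $\delta_{ij}\mathcal D_I$ and $\mathcal D_J$, and conclude by Fact~\ref{fact:E} and Proposition~\ref{prop:transfert}; your extra checks, namely that $S_n$-stability of $Y'$ gives the holomorphy hypothesis and that Fact~\ref{fact:E} holds pointwise for holomorphic $\eta$ because only $s_{ij}$-invariance and a partial-fraction identity are used, are sound refinements the paper leaves implicit.
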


\begin{proof}
Write $\phi=\eta(x^r)$ with $\eta$ an $S_n$-invariant holomorphic function on $Y'$ (Lemma~\ref{lem:descent}, \emph{not} Proposition~\ref{prop:invariants}, which only concerns polynomials). The operator $\eta\mapsto \delta_{ij}D_I\eta$ belongs to $\mathcal L$. Indeed, $\delta_{ij}=(\mathrm{id}-s_{ij})/(y_i-y_j)$ is a sum (case iii) of multiplication by $1/(y_i-y_j)$ (case i) and its composition with $-s_{ij}$ (cases ii, iv), so $\delta_{ij}D_I\in\mathcal L$ by composition with $D_I\in\mathcal L$ (case iv). Its transfer is $\delta_{ij}\mathcal D_I$, with $\delta_{ij}$ transferred term by term to $(\mathrm{id}-\sigma_{ij})/(x_i^r-x_j^r)$, in agreement with the definition of $\delta_{ij}$ on $x$ given above. Fact~\ref{fact:E} gives $\delta_{ij}D_I\eta=D_J\eta$; transferring via Proposition~\ref{prop:transfert}, $\delta_{ij}\mathcal D_I\phi = (\delta_{ij}D_I\eta)(x^r) = (D_J\eta)(x^r) = \mathcal D_J\phi$.
\end{proof}

\section{The symmetrized operator and the main theorem}

The generators $\mathcal D_I$ are indexed by subsets of a fixed size $d$; summing them over all such subsets produces the single operator $\mathcal D_d$ at the center of this section, together with the dual coordinates $U_k$ it characterizes.

\subsection{The symmetrized operator, its normalization, and the Leibniz rule}

We say that an operator $T$ acting on smooth functions on $\Omega$ is \emph{$G(r,n)$-symmetric} if it commutes with the action of $G(r,n)$, i.e.\ $T(g\cdot\phi)=g\cdot(T\phi)$ for every $g\in G(r,n)$ and every admissible $\phi$, where $(g\cdot\phi)(x):=\phi(g^{-1}x)$ is the same left action on functions already introduced in Section~\ref{sec:intro} (for a transposition $\sigma_{ij}$, an involution equal to its own inverse, this agrees with the map $\phi\mapsto\phi\circ\sigma_{ij}$ used for it in Section~\ref{sec:transfert}). Since $G(r,n)=\mu_r^n\rtimes S_n$ is generated by $S_n$ and by the torus $\mu_r^n$, it suffices to check that $T$ commutes with each of these two subgroups.

\begin{proposition}\label{prop:prop6}
For $1\le d\le n$, the operator
\[
\mathcal D_d := d!\sum_{\substack{I\subseteq\NN_n\\ |I|=d}} \mathcal D_I
\]
is a $G(r,n)$-symmetric differential operator, which sends $E_h$ ($h\ge d$) to $\dfrac{(n-h+d)!}{(n-h)!}E_{h-d}$, and to $0$ if $h<d$.
\end{proposition}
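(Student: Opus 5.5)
The proof will split into two independent parts: the action on the generators $E_h$, and $G(r,n)$-symmetry. The second is checked separately on the two subgroups $S_n$ and $\mu_r^n$, as the paragraph preceding the statement allows.

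\emph{Action on $E_h$.} The operator $D_d=d!\sum_{|I|=d}D_I$ is a finite sum of elements of $\mathcal L$, so it lies in $\mathcal L$. By construction its transfer is exactly $\mathcal D_d$, since transfer preserves sums and each $D_I$ transfers to $\mathcal D_I$ (proof of Lemma~\ref{lem:lem3}). I will apply Proposition~\ref{prop:transfert} to $\eta=e_h$, which is a polynomial, so the holomorphy hypothesis is automatic and no transposition occurs. Together with Fact~\ref{fact:F}, this gives
\[
\mathcal D_d E_h=(D_de_h)(x^r)=\frac{(n-h+d)!}{(n-h)!}\,e_{h-d}(x^r)=\frac{(n-h+d)!}{(n-h)!}E_{h-d},
\]
and $0$ when $h<d$. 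Alternatively, one can sum Corollary~\ref{cor:cor4} over $|I|=d$: the sum $\sum_{|I|=d}E_{h-d}(x_{I^c})$ counts each monomial of $E_{h-d}$ exactly $\binom{n-h+d}{d}$ times, and multiplying by $d!$ gives the same constant. I will mention this as a cross-check.

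\emph{$S_n$-equivariance.} For $\sigma\in S_n$, conjugation satisfies $\sigma\Delta_i\sigma^{-1}=\Delta_{\sigma(i)}$, and $\sigma$ carries the coefficient $\prod_{i\ne j\in I}(x_j^r-x_i^r)^{-1}$ to the corresponding coefficient for $\sigma(I)$. Hence $\sigma\mathcal D_I\sigma^{-1}=\mathcal D_{\sigma(I)}$. Since $I\mapsto\sigma(I)$ permutes the $d$-subsets, the sum defining $\mathcal D_d$ is preserved. The index bookkeeping under the left action $\phi\mapsto\phi\circ\sigma^{-1}$ needs some care, but it is routine.

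\emph{$\mu_r^n$-equivariance.} This is the genuinely new step. Take $t=(\zeta;\mathrm{id})$ and $\psi=t\cdot\phi$, so that $\psi(x)=\phi(\zeta_1^{-1}x_1,\dots,\zeta_n^{-1}x_n)$. By the chain rule,
\[
\partial_{x_i}\psi=\zeta_i^{-1}(\partial_i\phi)(\zeta^{-1}x).
\]
Since $\zeta_i^{r}=1$,
\[
\frac{1}{rx_i^{r-1}}=\zeta_i^{-(r-1)}\,\frac{1}{r(\zeta_i^{-1}x_i)^{r-1}}.
\]
The two factors combine to $\zeta_i^{-r}=1$, which gives $\Delta_i(t\cdot\phi)=t\cdot(\Delta_i\phi)$. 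The coefficients $(x_j^r-x_i^r)^{-1}$ are $\mu_r^n$-invariant, so each $\mathcal D_I$, and hence $\mathcal D_d$, commutes with $t$.

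I expect the main obstacle to be this torus step. It is the only point not obtainable from the transfer principle, and getting the exponent of $\zeta_i$ exactly right is essential: a plain $\partial_{x_i}$ would pick up a factor $\zeta_i^{-1}$ and fail to commute. Finally, $\mathcal D_d$ is a differential operator with rational coefficients that are regular on the locus where the $x_i^r$ are pairwise distinct and the $x_i$ are nonzero. I will state the domain as $\Omega'$, where all the coefficients are holomorphic.
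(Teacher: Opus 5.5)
Your proposal is correct and takes essentially the same route as the paper. You obtain the action on $E_h$ by transferring Fact~\ref{fact:F}, with the summation of Corollary~\ref{cor:cor4} as the alternative, $S_n$-symmetry by permuting the $d$-subsets, and torus invariance from the cancellation $\zeta_i^{-1}\cdot\zeta_i^{-(r-1)}=\zeta_i^{-r}=1$; your explicit checks that $\sigma\mathcal D_I\sigma^{-1}=\mathcal D_{\sigma(I)}$ and of the left action of $t=(\zeta;\mathrm{id})$ just spell out what the paper states more tersely.
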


\begin{proof}
Symmetry is immediate: a permutation $\sigma\in S_n\subseteq G(r,n)$ permutes the size-$d$ subsets $I$, so the sum $\sum_{|I|=d}\mathcal D_I$ is invariant under conjugation by $S_n$. Invariance under the toric part $\mu_r^n$ follows from a direct computation: under $x_i\mapsto\zeta x_i$ ($\zeta\in\mu_r$), $\partial_{x_i}\mapsto\zeta^{-1}\partial_{x_i}$ and $x_i^{r-1}\mapsto\zeta^{r-1}x_i^{r-1}$, hence $\Delta_i=\frac{1}{rx_i^{r-1}}\partial_{x_i}\mapsto \zeta^{-r}\Delta_i=\Delta_i$ since $\zeta^r=1$. Thus each $\Delta_i$, and hence $\mathcal D_I$, commutes with the toric action. The action on $E_h$ follows from the transfer of Fact~\ref{fact:F}, since the operator $D_d=d!\sum_{|I|=d}D_I\in\mathcal L$ transfers to $\mathcal D_d$; or directly from Corollary~\ref{cor:cor4}, by summing over the $\binom{n-h+d}{d}\cdot d!$ relevant subsets $I$, exactly as in the proof of the corresponding result of \cite{Ze}.
\end{proof}

We normalize, as in \cite{Ze},
\[
\widetilde E_h := \frac{(n-h)!}{n!}\,E_h \qquad (0\le h\le n,\ \widetilde E_0:=1).
\]

\begin{corollary}\label{cor:cor7}
$\mathcal D_d\,\widetilde E_h = \widetilde E_{h-d}$ for $0\le h\le n$ (zero if $h<d$).
\end{corollary}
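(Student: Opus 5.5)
The plan is to deduce Corollary~\ref{cor:cor7} directly from Proposition~\ref{prop:prop6} by a one-line rescaling, with the transfer principle as an alternative route.

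First handle the trivial cases. If $h<d$, then $\widetilde E_h$ is a scalar multiple of $E_h$, and $\mathcal D_d E_h=0$ by Proposition~\ref{prop:prop6}. For $h=0$ (and in general for $h<d$), $\widetilde E_0=1$ is constant, so it is annihilated by every $\Delta_i$ and hence by $\mathcal D_d$.

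For $d\le h\le n$, use linearity of $\mathcal D_d$ together with Proposition~\ref{prop:prop6}:
\[
\mathcal D_d\widetilde E_h=\frac{(n-h)!}{n!}\cdot\frac{(n-h+d)!}{(n-h)!}\,E_{h-d}=\frac{(n-(h-d))!}{n!}\,E_{h-d}=\widetilde E_{h-d}.
\]
The last equality is just the definition of $\widetilde E_{h-d}$ with index $h-d\in\{0,\dots,n\}$. When $h=d$ it reads $\mathcal D_d\widetilde E_d=\frac{n!}{n!}E_0=1=\widetilde E_0$, using the convention $E_0=1$, which is consistent with Lemma~\ref{lem:lem2}.

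Alternatively, one can apply Proposition~\ref{prop:transfert} to $L=D_d$ and the polynomial $\eta=\tilde e_h$, whose pullback $\tilde e_h(x^r)$ is exactly $\widetilde E_h$. Fact~\ref{fact:G} then gives $\mathcal D_d\widetilde E_h=(\tilde e_{h-d})(x^r)=\widetilde E_{h-d}$.

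There is no real obstacle here. The only point needing care is the bookkeeping of the factorials, so that the factor $(n-h)!$ cancels and leaves $(n-(h-d))!/n!$.
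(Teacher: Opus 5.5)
Your proposal is correct and matches the paper's proof. Both rescale Proposition~\ref{prop:prop6} by $\frac{(n-h)!}{n!}$, so that the $(n-h)!$ cancels and $\frac{(n-h+d)!}{n!}E_{h-d}=\widetilde E_{h-d}$ remains, with the case $h<d$ read off from the same proposition. Your alternative route, transferring Fact~\ref{fact:G} through Proposition~\ref{prop:transfert} with $\eta=\tilde e_h$, is also valid, and is essentially the transfer argument the paper already cites in proving Proposition~\ref{prop:prop6}.
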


\begin{proof}
By Proposition~\ref{prop:prop6}, $\mathcal D_d\widetilde E_h = \frac{(n-h)!}{n!}\cdot\frac{(n-h+d)!}{(n-h)!}E_{h-d} = \frac{(n-h+d)!}{n!}E_{h-d} = \widetilde E_{h-d}$ (and $0$ if $h<d$), a computation identical to that of Corollary~7 of \cite{Ze}.
\end{proof}

For a partition $\lambda\vdash k$ (multiplicities $m_h$, $1\le h\le k$), set $\widetilde E_\lambda := \prod_{h=1}^{k}\widetilde E_h^{\,m_h}$.

\begin{proposition}[Leibniz rule for $\mathcal D_d$]\label{prop:leibniz}
$\mathcal D_d$ ($1\le d\le n$) is a \emph{rational} differential operator on $\Omega$, with poles on the locus $\{x_i^r=x_j^r,\ \exists\, i\ne j\}$ (arising from the denominators of its terms $\mathcal D_I$, Lemma~\ref{lem:lem3}; this locus is empty for $d=1$, where $\mathcal D_1=\sum_i\Delta_i$ has no denominator). For all smooth $\phi,\eta$ on $\Omega$ (not necessarily invariant), at every point where $\mathcal D_d$ is defined,
\[
\mathcal D_d(\phi\eta) = (\mathcal D_d\phi)\,\eta + \phi\,(\mathcal D_d\eta).
\]
\end{proposition}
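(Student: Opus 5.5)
The proof is structural: $\mathcal D_d$ is first-order, so it is a derivation. I will not use the transfer principle, since $\phi,\eta$ need not be of the form $\eta(x^r)$. Instead I unfold the definition to get an explicit first-order expression. By Proposition~\ref{prop:prop6} and Lemma~\ref{lem:lem3},
\[
\mathcal D_d \;=\; d!\sum_{|I|=d}\ \sum_{i\in I}\frac{1}{\prod_{i\ne j\in I}(x_j^r-x_i^r)}\,\Delta_i
\;=\;\sum_{i=1}^n c_i(x)\,\Delta_i,
\qquad
c_i(x):=d!\sum_{\substack{|I|=d\\ I\ni i}}\ \prod_{j\in I\setminus\{i\}}\frac{1}{x_j^r-x_i^r},
\]
obtained by exchanging the two finite sums and grouping terms by the index $i$.

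This rewriting immediately gives the rationality and pole claims. Each $c_i$ is a rational function of $x$ whose denominators are products of factors $x_j^r-x_i^r$ with $j\ne i$. Each $\Delta_i=\frac{1}{rx_i^{r-1}}\partial_{x_i}$ has coefficient holomorphic on $\Omega$. So $\mathcal D_d$ is a rational first-order operator on $\Omega$, with poles contained in $\{x_i^r=x_j^r\text{ for some } i\ne j\}$. For $d=1$ every $I=\{i\}$ gives an empty product, so $c_i=1$ and $\mathcal D_1=\sum_i\Delta_i$, which has no denominator. I only need the inclusion of the pole locus in that set; I will make no claim about exact cancellations.

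The Leibniz rule then follows pointwise. Fix a point $x\in\Omega$ with $x_i^r\ne x_j^r$ for all $i\ne j$, which is exactly where every $c_i$ is defined. At such a point each $\partial_{x_i}$ satisfies $\partial_{x_i}(\phi\eta)=(\partial_{x_i}\phi)\eta+\phi\,\partial_{x_i}\eta$. Multiplying by the scalar $c_i(x)/(rx_i^{r-1})$ and summing over $i$ gives
\[
\mathcal D_d(\phi\eta)=(\mathcal D_d\phi)\eta+\phi(\mathcal D_d\eta).
\]
This holds because a linear combination of derivations with function coefficients is again a derivation. No invariance of $\phi$ or $\eta$ is used, which matches the statement.

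The only point needing care, and the main obstacle, is the first step. I must verify that no composition or higher-order term is hidden in the definition of $\mathcal D_d$. Unlike the divided difference $\delta_{ij}$ of Proposition~\ref{prop:prop5}, which involves $\sigma_{ij}$ and is not a derivation, $\mathcal D_I$ as defined in Lemma~\ref{lem:lem3} consists only of multiplications of the $\Delta_i$ by rational functions. I will state this explicitly to explain why this result needs its own argument rather than a transfer, consistent with the remarks in the introduction.
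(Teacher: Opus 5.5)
Your proposal is correct and is essentially the paper's own proof. The paper also regroups $\mathcal D_d$ as a rational vector field $\sum_i g_i(x)\,\partial_{x_i}$ with $g_i=d!\sum_{|I|=d,\,i\in I}\big[rx_i^{r-1}\prod_{i\ne j\in I}(x_j^r-x_i^r)\big]^{-1}$, which is exactly your $c_i(x)/(rx_i^{r-1})$, and it concludes because a first-order operator with no order-zero term satisfies the Leibniz rule pointwise wherever its coefficients are defined. One harmless imprecision: for $d=1$ the $c_i$ are defined on all of $\Omega$, not ``exactly'' off the coincidence locus, but the pointwise argument is unaffected.
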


\begin{proof}
By construction, $\mathcal D_I=\sum_{i\in I} f_{I,i}(x)\,\partial_{x_i}$ with $f_{I,i}(x):=\big[rx_i^{r-1}\prod_{i\ne j\in I}(x_j^r-x_i^r)\big]^{-1}$ a rational function. Hence $\mathcal D_d=d!\sum_{|I|=d}\mathcal D_I=\sum_{i=1}^n g_i(x)\,\partial_{x_i}$, where $g_i(x):=d!\sum_{|I|=d,\,i\in I} f_{I,i}(x)$ is again a rational function. An operator of the form $\sum_i g_i(x)\partial_{x_i}$ (a vector field, with no order-$0$ term) trivially satisfies the general Leibniz rule, by linearity and the ordinary Leibniz rule for each $\partial_{x_i}$:
\[
\mathcal D_d(\phi\eta) = \sum_i g_i\,\partial_i(\phi\eta) = \sum_i g_i\big(\eta\,\partial_i\phi + \phi\,\partial_i\eta\big) = \eta\sum_i g_i\partial_i\phi + \phi\sum_i g_i\partial_i\eta = \eta\,\mathcal D_d\phi + \phi\,\mathcal D_d\eta. \qedhere
\]
\end{proof}

\begin{corollary}\label{cor:cor8}
For $1\le d\le n$ and $\lambda\vdash k$,
\[
\mathcal D_d\, \widetilde E_\lambda = \sum_{h=d}^{k} m_h\, \widetilde E_{\lambda-d\varepsilon_h}
\]
(an empty, hence zero, sum if $d>k$).
\end{corollary}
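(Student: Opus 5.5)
The plan is to argue directly from the two results just established: the Leibniz rule for $\mathcal D_d$ (Proposition~\ref{prop:leibniz}) and the action of $\mathcal D_d$ on the normalized generators (Corollary~\ref{cor:cor7}). A transfer argument from Fact~\ref{fact:H} via Proposition~\ref{prop:transfert} would also work, since $\widetilde E_\lambda=\tilde e_\lambda(x^r)$. The direct route, however, keeps the argument independent of \cite{Ze}, so I will follow it and mention the transfer only as a check.

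First, since $\mathcal D_d$ is a first-order vector field, Proposition~\ref{prop:leibniz} extends by induction on the number of factors to finite products:
\[
\mathcal D_d\Big(\prod_{q=1}^{N}\phi_q\Big)=\sum_{q=1}^{N}(\mathcal D_d\phi_q)\prod_{p\ne q}\phi_p .
\]
Applying this to $\widetilde E_\lambda=\prod_{h}\widetilde E_h^{m_h}$, viewed as a product of $l(\lambda)=\sum_h m_h$ factors, gives
\[
\mathcal D_d\widetilde E_\lambda=\sum_{h=1}^{k} m_h\,\widetilde E_h^{m_h-1}\,(\mathcal D_d\widetilde E_h)\prod_{h'\ne h}\widetilde E_{h'}^{m_{h'}} .
\]
Here the factor $m_h$ counts the identical factors equal to $\widetilde E_h$. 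Equivalently, it comes from the power rule $\mathcal D_d(\psi^m)=m\psi^{m-1}\mathcal D_d\psi$.

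Second, substitute Corollary~\ref{cor:cor7}: $\mathcal D_d\widetilde E_h=\widetilde E_{h-d}$ for $h\ge d$, and $0$ for $h<d$. The terms with $h<d$ therefore drop out, and the sum starts at $h=d$. When $d>k$, every term vanishes, which is the empty-sum case. For $h\ge d$, the resulting product replaces one factor $\widetilde E_h$ by $\widetilde E_{h-d}$, which is exactly $\widetilde E_{\lambda-d\varepsilon_h}$. When $h=d$, the new part is $0$ and $\widetilde E_0=1$, matching the convention that a zero part is deleted from the partition.

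The main point requiring care is bookkeeping rather than analysis. First, Corollary~\ref{cor:cor7} requires $h\le n$, so $\widetilde E_h$ must be defined; this is guaranteed because $\lambda$ only has parts $h$ with $m_h>0$, and those are at most $n$ by the standing constraint $l(\lambda^t)\le n$. Second, the identification of $\widetilde E_{\lambda-d\varepsilon_h}$ must respect the multiplicity conventions, including $h=d$. Finally, the identity holds at every point of $\Omega$ where $\mathcal D_d$ is defined. Both sides are polynomial, since the right side is a polynomial in the $\widetilde E$'s, so the identity extends wherever one wants it.
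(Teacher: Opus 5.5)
Your proof is correct and follows essentially the same route as the paper: apply the Leibniz rule of Proposition~\ref{prop:leibniz} iteratively to the factors of $\widetilde E_\lambda$, substitute Corollary~\ref{cor:cor7} to remove the terms with $h<d$ and identify the rest with $\widetilde E_{\lambda-d\varepsilon_h}$, then extend the identity from the complement of the polar locus to all of $\Omega$. One wording point: your claim that ``both sides are polynomial'' should be read as the paper states it, namely that $\mathcal D_d\widetilde E_\lambda$ is a priori only a rational function, and it is its agreement with the polynomial right-hand side off the hyperplanes $x_i^r=x_j^r$ that shows it has no poles.
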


\begin{proof}
By Proposition~\ref{prop:leibniz} (the generalized Leibniz rule, applied iteratively to the product $\widetilde E_\lambda=\prod_{h=1}^k\widetilde E_h^{m_h}$),
\[
\mathcal D_d\widetilde E_\lambda = \sum_{h=1}^{k} m_h\,\widetilde E_h^{\,m_h-1}\Big(\prod_{g\ne h}\widetilde E_g^{\,m_g}\Big)\cdot \mathcal D_d\widetilde E_h.
\]
By Corollary~\ref{cor:cor7}, $\mathcal D_d\widetilde E_h=0$ for $h<d$, and $=\widetilde E_{h-d}$ for $h\ge d$. The terms $h<d$ therefore vanish, and for $h\ge d$ the corresponding term equals $m_h\,\widetilde E_h^{m_h-1}\widetilde E_{h-d}\prod_{g\ne h}\widetilde E_g^{m_g} = m_h\,\widetilde E_{\lambda-d\varepsilon_h}$, since replacing, in the product $\widetilde E_\lambda$, one factor $\widetilde E_h$ (among the $m_h$ present) by $\widetilde E_{h-d}$ is exactly the definition of $\widetilde E_{\lambda - d\varepsilon_h}$. Summing over $h=d,\dots,k$ gives the announced equality, a priori only outside the polar locus of $\mathcal D_d$ (Proposition~\ref{prop:leibniz}). But $\mathcal D_d\widetilde E_\lambda$ is, by construction (Lemma~\ref{lem:lem3}), a rational function on $\Omega$ whose possible poles are contained in the finite union of the hyperplanes $\{x_i^r=x_j^r\}$; since it coincides, on the dense complement of that union, with the polynomial $\sum_hm_h\widetilde E_{\lambda-d\varepsilon_h}$, the equality between rational functions extends to all of $\Omega$. This shows, in passing, that $\mathcal D_d\widetilde E_\lambda$ actually has no poles at all.
\end{proof}

\begin{remark}
This proof is entirely direct: it does not invoke the transfer principle, only the immediate fact that $\mathcal D_d$ is a derivation (a rational vector field) on $\Omega$, together with the elementary extension argument for rational functions (two rational functions agreeing on a dense open set agree wherever both are defined) to extend the identity, a priori valid only outside the polar locus of $\mathcal D_d$, to all of $\Omega$.
\end{remark}

\subsection{The main theorem}

Existence and uniqueness of the dual coordinates $U_k$ themselves can now be stated and proved, with $\mathcal D_d$ and its Leibniz rule in place.

\begin{theorem}\label{thm:main}
For every $1\le k\le n$, there exists a unique $G(r,n)$-invariant polynomial $U_k$, homogeneous of degree $rk$, such that $\mathcal D_d\, U_k = \delta_{d,k}$ for every $1\le d\le n$. It is given explicitly by
\[
U_k(x) \;=\; \sum_{\lambda\vdash k} (-1)^{l(\lambda)-1}\,\frac{(l(\lambda)-1)!}{\prod_{h=1}^{k} m_h!}\; \widetilde E_\lambda(x).
\]
\end{theorem}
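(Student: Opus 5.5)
The plan is to prove existence and uniqueness separately, using the transfer principle for the short route. The structural input is a single fact: by Corollary~\ref{cor:base} and Proposition~\ref{prop:invariants}, the substitution $\sigma: y\mapsto x^r$ is a degree-multiplying isomorphism $\QQ[y]^{S_n}_k\xrightarrow{\sim}\QQ[x]^{G(r,n)}_{rk}$, sending $\tilde e_\lambda$ to $\widetilde E_\lambda$.

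\emph{Existence.} I take $U_k:=u_k(x^r)$, with $u_k$ as in Fact~\ref{fact:I}. Since $\sigma(\tilde e_\lambda)=\widetilde E_\lambda$, substituting into the explicit formula of Fact~\ref{fact:I} gives exactly the displayed formula for $U_k$. This $U_k$ is therefore a $G(r,n)$-invariant polynomial, homogeneous of degree $rk$. Because $D_d\in\mathcal L$ transfers to $\mathcal D_d$ and $u_k$ is a polynomial, Proposition~\ref{prop:transfert} gives $\mathcal D_dU_k=(D_du_k)(x^r)=\delta_{d,k}$. This holds first on $\Omega$ minus the polar locus, and then everywhere, since both sides are constants and the left side is a rational function (the same density argument as in Corollary~\ref{cor:cor8}).

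\emph{Uniqueness.} Let $V$ be another $G(r,n)$-invariant polynomial, homogeneous of degree $rk$, with $\mathcal D_dV=\delta_{d,k}$ for all $d$. By the isomorphism above, $V=v(x^r)$ for a unique $v\in\CC[y]^{S_n}_k$. Transferring again, $(D_dv)(x^r)=\mathcal D_dV=\delta_{d,k}$. The map $x\mapsto x^r$ is surjective onto $\CC^n$, so $D_dv=\delta_{d,k}$ as polynomial identities in $y$, at least after clearing the polar locus by density. Zemel's uniqueness (Fact~\ref{fact:I}) then gives $v=u_k$, hence $V=U_k$. Fact~\ref{fact:I} is stated over $\QQ$, so I will check that its uniqueness argument is linear algebra over any field of characteristic $0$; alternatively, I will note that the second proof below already covers $\CC$.

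\emph{Second, self-contained proof.} Here I verify the formula directly with Corollary~\ref{cor:cor8}. Writing $c_\lambda:=(-1)^{l(\lambda)-1}(l(\lambda)-1)!/\prod m_h!$, the identity $\mathcal D_dU_k=\sum_\lambda c_\lambda\sum_h m_h\widetilde E_{\lambda-d\varepsilon_h}$ reduces to a combinatorial identity. For $d<k$ the coefficient of each $\widetilde E_\mu$ with $\mu\vdash k-d$ must vanish. For $d=k$ only $\lambda=(k)$ contributes, and it contributes $c_{(k)}\widetilde E_0=1$. To check the vanishing, I group the $\lambda$ with $\lambda-d\varepsilon_h=\mu$. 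These $\lambda$ are $\mu$ with one part $j$ raised to $j+d$, where $j=0$ means a new part $d$ is added. The resulting alternating sum is then evaluated with the multiplicities $m_j(\mu)$, and the terms should cancel by the relation $\sum$ (over the choices of $j$) of the ratios of multiplicities and $l$-factors. This is the computation I expect to be the main obstacle: managing the case where a new part is created, which changes $l(\lambda)$, against the cases where $l$ is preserved.

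Uniqueness in this approach comes from triangularity. A degree-$rk$ invariant $V=\sum_{\lambda\vdash k}a_\lambda\widetilde E_\lambda$ with $\mathcal D_dV=0$ for all $d$ must be $0$. I will order partitions by length and look at the coefficient of $\widetilde E_{\lambda-d\varepsilon_h}$. The condition forces the $a_\lambda$ to vanish successively. The key case is $\mathcal D_1$ acting on single-part terms: it lowers one part by $1$ with coefficient $m_h$, and on the basis $\{\widetilde E_\mu\}$ this map is injective on the top-degree pieces. If that injectivity argument proves delicate, I will fall back on the transfer-based uniqueness above.
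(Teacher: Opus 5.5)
Your transfer-based argument is correct and is essentially the paper's short proof of Theorem~\ref{thm:main}: write $V=v(x^r)$, transfer $D_d\mapsto\mathcal D_d$ by Proposition~\ref{prop:transfert}, use surjectivity/density of $x\mapsto x^r$, and invoke Fact~\ref{fact:I}. Your attention to the polar locus and to Fact~\ref{fact:I} being stated over $\QQ$ is, if anything, more careful than the paper; the latter point is harmless, because the system $D_dv=\delta_{d,k}$ has rational coefficients in the basis $\tilde e_\lambda$, so its kernel is trivial over $\CC$ as soon as it is trivial over $\QQ$.

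In your optional self-contained route, the existence grouping is exactly the paper's Cancellation Lemma~\ref{lem:cancel}, and the cancellation reduces to $\sum_{j\ge1}\mu_j-l(\mu)=0$. The uniqueness sketch, however, would not work as stated. $\mathcal D_1$ is not injective on invariants of a fixed degree: for $n\ge2$ it kills $U_2=\widetilde E_2-\tfrac12\widetilde E_1^2$. The paper instead orders partitions by their largest part $l(\lambda^t)$ and solves layer by layer in decreasing $d$. This uses two facts: $\mathcal D_d$ kills every $\widetilde E_\lambda$ with largest part $<d$, and it acts on those with largest part exactly $d$ by the injective map $\widetilde E_\lambda\mapsto m_d\widetilde E_{\lambda-d\varepsilon_d}$.
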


\begin{proof}[Short proof, via transfer]
The operator $D_d\in\mathcal L$ (a finite sum of the $D_I\in\mathcal L$), so Proposition~\ref{prop:transfert} gives $\mathcal D_d(\psi(x^r))=(D_d\psi)(x^r)$ for every $1\le d\le n$ and every $\psi\in\CC[y]_k$. Let $V$ be any $G(r,n)$-invariant homogeneous polynomial of degree $rk$; by Proposition~\ref{prop:invariants}, $V=\psi(x^r)$ for a unique $\psi\in\CC[y]^{S_n}_k$. Hence $\mathcal D_dV=\delta_{d,k}$ for every $d$ if and only if $(D_d\psi)(x^r)=\delta_{d,k}$ for every $d$; since $x\mapsto x^r$ is surjective on $\CC^n$, a polynomial identity valid on its image holds identically, so this holds if and only if $D_d\psi=\delta_{d,k}$ for every $d$. By Fact~\ref{fact:I}, this determines $\psi$ uniquely, namely $\psi=u_k$. Thus $U_k:=u_k(x^r)$ is the unique solution, giving existence and uniqueness at once. The stated explicit formula then follows by substituting $y=x^r$ into the formula of Fact~\ref{fact:I} for $u_k$, using $\widetilde E_\lambda(x)=\tilde e_\lambda(x^r)$, which in turn follows at once from $\widetilde E_h(x)=\tilde e_h(x^r)$ and the definitions of $\widetilde E_h$ and $\tilde e_h$.
\end{proof}

\begin{remark}
We nonetheless give a second, fully independent proof below, working entirely with the variables $x$ and presupposing only Corollaries~\ref{cor:cor7} and~\ref{cor:cor8}, not Fact~\ref{fact:I} itself or its proof. Besides being self-contained, it re-derives the explicit formula for $U_k$ from first principles, by making explicit the following \emph{triangular} linear system: writing, by Corollary~\ref{cor:base}, any candidate $V=\sum_{\lambda\vdash k}c_\lambda\widetilde E_\lambda$, the $n$ equations $\mathcal D_dV=\delta_{d,k}$ ($1\le d\le n$) form a linear system in the unknowns $(c_\lambda)_{\lambda\vdash k}$. Ordering the partitions $\lambda\vdash k$ by $l(\lambda^t)$ (the largest part of $\lambda$), Corollary~\ref{cor:cor8} shows that $\mathcal D_d\widetilde E_\lambda$ only involves $\widetilde E_\rho$ with $l(\rho^t)\le l(\lambda^t)$. Run in decreasing order of $d$, equivalently of the layer $l(\lambda^t)=d$ being solved for, the system is thus triangular, and its diagonal entries, the coefficients of $\widetilde E_{\lambda-d\varepsilon_d}$ in $\mathcal D_d\widetilde E_\lambda$ for $l(\lambda^t)=d$, equal to $m_d\ne0$, are nonzero. This is exactly what the descending induction below verifies, layer by layer. The combinatorial bookkeeping underlying the existence half of this induction, verifying that the coefficients cancel once the triangular system has been solved, is isolated as a single Lemma~\ref{lem:cancel}, so that this second proof, while complete, is not lengthened by carrying that computation inline.
\end{remark}

\begin{proof}[Second, self-contained proof, by a triangular elimination in $x$]
Denote by $\lambda_k$ the partition of $k$ of length $1$ (single part $k$, $m_k=1$).

\smallskip
\noindent\textbf{Uniqueness.} By Corollary~\ref{cor:base}, every $G(r,n)$-invariant homogeneous polynomial of degree $rk$ is uniquely written as $\sum_{\lambda\vdash k}c_\lambda\widetilde E_\lambda$.

For $d>k$: every partition $\lambda\vdash k$ only has parts $h\le k<d$, so $m_h=0$ for $h\ge d$; Corollary~\ref{cor:cor8} then gives $\mathcal D_d\widetilde E_\lambda=0$ (empty sum), so $\mathcal D_dU_k=0$ automatically, whatever the choice of the $c_\lambda$: this equation constrains nothing.

For $d=k$: among the $\lambda\vdash k$, only $\lambda_k$ has a part $\ge k$; for $\lambda\ne\lambda_k$ every part is $<k$, so $\mathcal D_k\widetilde E_\lambda=0$ by Corollary~\ref{cor:cor8}, while $\mathcal D_k\widetilde E_{\lambda_k}=\mathcal D_k\widetilde E_k=\widetilde E_0=1$ by Corollary~\ref{cor:cor7}. Hence $\mathcal D_kU_k=c_{\lambda_k}$, and the equation $\mathcal D_kU_k=1$ is equivalent to $c_{\lambda_k}=1$.

We then proceed by descending induction on $l(\lambda^t)$ (the largest part occurring in $\lambda$), the base case $l(\lambda^t)=k$ (i.e.\ $\lambda=\lambda_k$) being treated above. Let $1\le d<k$, and suppose all $c_\lambda$ with $l(\lambda^t)>d$ have been determined from the equations $\mathcal D_pU_k=\delta_{p,k}$, $p>d$ (by Corollary~\ref{cor:cor8}, $\mathcal D_p$ with $p>d$ annihilates every $\widetilde E_\mu$ such that $l(\mu^t)\le d<p$, so these equations only involve the $c_\lambda$ with $l(\lambda^t)\ge p>d$: they thus determine, by the descending induction already begun, exactly the $c_\lambda$ with $l(\lambda^t)>d$, unambiguously).

Consider the equation $\mathcal D_dU_k=0$. The $\widetilde E_\lambda$ with $l(\lambda^t)<d$ are annihilated by $\mathcal D_d$ (Corollary~\ref{cor:cor8}), so they do not contribute. The image under $\mathcal D_d$ of the already-determined part $\sum_{\lambda:\,l(\lambda^t)>d}c_\lambda\widetilde E_\lambda$ is a \emph{known} combination $\sum_{\rho\vdash k-d}b_\rho\widetilde E_\rho$ (computable via Corollary~\ref{cor:cor8} from the already-fixed $c_\lambda$). For $\lambda$ with $l(\lambda^t)=d$ exactly, Corollary~\ref{cor:cor8} (whose sum over $h$ is bounded by $l(\lambda^t)=d$) reduces to the single term $h=d$:
\[
\mathcal D_d\widetilde E_\lambda = m_d\,\widetilde E_{\lambda-d\varepsilon_d}.
\]
The map $\lambda\mapsto\lambda-d\varepsilon_d$ is injective from $\{\lambda\vdash k: l(\lambda^t)=d\}$ into $\{\rho\vdash k-d\}$ (it is a bijection onto $\{\rho\vdash k-d: l(\rho^t)\le d\}$, the inverse consisting of adding a part $d$ to $\rho$). Thus, the only way to make $\mathcal D_dU_k$ vanish is to set, for each such $\lambda$,
\[
c_\lambda := -\,\frac{b_{\lambda-d\varepsilon_d}}{m_d},
\]
a quantity entirely determined by the induction hypothesis. This determines $c_\lambda$ uniquely for every $\lambda\vdash k$, by descending induction from $d=k$ down to $d=1$ (the $\lambda$ with $l(\lambda^t)=1$, i.e.\ $\lambda=(1,1,\dots,1)$, being the last case treated; since no partition of $k\ge1$ has $l(\lambda^t)=0$, the induction never needs to consider $d=0$, where $\mathcal D_0$ is in any case undefined). Hence the uniqueness of $U_k$, \emph{if it exists}.

\smallskip
\noindent\textbf{Existence.} We check that the explicit formula satisfies $\mathcal D_dU_k=\delta_{d,k}$ for every $d$. The cases $d>k$ and $d=k$ have already been treated above (equal to $0$, resp.\ $1$, whatever the choice of coefficients, resp.\ for the choice $c_{\lambda_k}=1$ from the formula). So let $1\le d<k$; let us show $\mathcal D_dU_k=0$. By Corollary~\ref{cor:cor8} and linearity,
\[
\mathcal D_dU_k = \sum_{\lambda\vdash k}(-1)^{l(\lambda)-1}\frac{(l(\lambda)-1)!}{\prod_{h=1}^k m_h!}\sum_{h=d}^{k}m_h\,\widetilde E_{\lambda-d\varepsilon_h},
\]
and, for each $\rho\vdash k-d$ (multiplicities $\mu_h$), the pairs $(\lambda,h)$ contributing to the coefficient of $\widetilde E_\rho$ are exactly those with $\lambda\vdash k$, $h\ge d$, $\lambda-d\varepsilon_h=\rho$. Lemma~\ref{lem:cancel} below evaluates the resulting sum over $h$ and shows it vanishes for every such $\rho$; hence $\mathcal D_dU_k=0$ for every $d<k$, completing the verification of existence and the proof of the theorem.
\end{proof}

\begin{lemma}[Cancellation lemma]\label{lem:cancel}
Let $1\le d<k$ and let $\rho\vdash(k-d)$ have multiplicities $(\mu_h)_{h\ge1}$. Then
\[
\sum_{h=d}^{k} m_h(\lambda_h)\cdot(-1)^{l(\lambda_h)-1}\frac{(l(\lambda_h)-1)!}{\prod_{g=1}^k m_g(\lambda_h)!} \;=\; 0,
\]
where, for each $h$ with $d\le h\le k$, $\lambda_h\vdash k$ denotes the partition with $\lambda_h-d\varepsilon_h=\rho$ (i.e.\ $\rho$ with a part $h-d$ removed and a part $h$ added, understood as simply adding a part $h$ when $h=d$), and $m_g(\mu)$ denotes the multiplicity of $g$ in a partition $\mu$.
\end{lemma}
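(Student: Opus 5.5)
The plan is a direct computation. For each admissible $h$, express every ingredient of the summand (the multiplicity $m_h(\lambda_h)$, the length $l(\lambda_h)$, and the product $\prod_g m_g(\lambda_h)!$) in terms of the data $(\mu_g)$ of $\rho$. Then check that the single term $h=d$ exactly cancels the sum of all terms $h>d$.

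First I will fix the convention forced by the statement. For $h>d$, the partition $\lambda_h$ exists only if $\rho$ actually has a part equal to $h-d$, i.e.\ $\mu_{h-d}\ge1$. When $\mu_{h-d}=0$ the corresponding term is absent, and I read it as $0$. I will also record that $k-d\ge1$, so $L:=l(\rho)\ge1$.

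Next, the term $h=d$. Here $\lambda_d=\rho\cup\{d\}$, so $l(\lambda_d)=L+1$ and $m_d(\lambda_d)=\mu_d+1$. All other multiplicities are unchanged, so $\prod_g m_g(\lambda_d)!=(\mu_d+1)\prod_g\mu_g!$. The factor $\mu_d+1$ cancels, and the term equals
\[
(-1)^{L}\,\frac{L!}{\prod_g\mu_g!}.
\]

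Then the terms $h>d$ with $\mu_{h-d}\ge1$. Since $d\ge1$, the parts $h$ and $h-d$ are distinct, so the replacement does not interfere with itself. We get $l(\lambda_h)=L$, $m_h(\lambda_h)=\mu_h+1$, $m_{h-d}(\lambda_h)=\mu_{h-d}-1$, and therefore
\[
\prod_g m_g(\lambda_h)!=\frac{(\mu_h+1)\prod_g\mu_g!}{\mu_{h-d}}.
\]
Again the factor $\mu_h+1$ cancels, and the term equals
\[
(-1)^{L-1}(L-1)!\,\frac{\mu_{h-d}}{\prod_g\mu_g!}.
\]
Summing over $h$ from $d+1$ to $k$ amounts to summing over $a=h-d$ from $1$ to $k-d$, which covers every possible part size of $\rho$. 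This uses $\sum_a\mu_a=L$ and gives a total of $(-1)^{L-1}L!/\prod_g\mu_g!$, which is exactly the negative of the $h=d$ term. That proves the lemma.

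The only delicate point is the bookkeeping in the step for $h>d$. One must check that the range $d<h\le k$ captures every part value $a\le k-d$ of $\rho$, and that the conventions (absent $\lambda_h$ counts as $0$; $h\ne h-d$) are consistent with how the pairs $(\lambda,h)$ were enumerated in the existence part of the proof of Theorem~\ref{thm:main}. Once that is verified, both cancellations are immediate.
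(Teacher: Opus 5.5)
Your proposal is correct and follows the paper's proof essentially step for step. You compute the $h=d$ term as $(-1)^{l(\rho)}\,l(\rho)!/\prod_g\mu_g!$ and each $h>d$ term as $(-1)^{l(\rho)-1}(l(\rho)-1)!\,\mu_{h-d}/\prod_g\mu_g!$, then cancel them using $\sum_a\mu_a=l(\rho)$; your explicit convention that an absent $\lambda_h$ (when $\mu_{h-d}=0$) contributes $0$ makes precise what the paper leaves implicit through the vanishing factor $\mu_{h-d}$.
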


\begin{proof}
\emph{Case $h>d$.} Passing from $\lambda_h$ to $\rho$ amounts to replacing a part $h$ by $h-d$; hence $m_h(\lambda_h)=\mu_h+1$, $m_{h-d}(\lambda_h)=\mu_{h-d}-1$, $m_g(\lambda_h)=\mu_g$ otherwise, so $l(\lambda_h)=l(\rho)$ and
\[
\frac{m_h(\lambda_h)}{\prod_{g=1}^k m_g(\lambda_h)!} = \frac{\mu_{h-d}}{\prod_{g=1}^{k-d}\mu_g!}.
\]
This term's contribution to the sum is therefore $(-1)^{l(\rho)-1}\dfrac{(l(\rho)-1)!}{\prod_{g=1}^{k-d}\mu_g!}\,\mu_{h-d}$.

\emph{Case $h=d$.} Passing from $\lambda_d$ to $\rho$ amounts to removing a part $d$; hence $m_d(\lambda_d)=\mu_d+1$, $m_g(\lambda_d)=\mu_g$ otherwise, so $l(\lambda_d)=l(\rho)+1$ and $m_d(\lambda_d)/\prod_g m_g(\lambda_d)! = 1/\prod_{g=1}^{k-d}\mu_g!$. Since $(-1)^{l(\lambda_d)-1}(l(\lambda_d)-1)! = -(-1)^{l(\rho)-1}l(\rho)!$, this term's contribution equals $-\,l(\rho)$ times $(-1)^{l(\rho)-1}\dfrac{(l(\rho)-1)!}{\prod_{g=1}^{k-d}\mu_g!}$.

\emph{Total.} Summing over $h=d+1,\dots,k$ (case $h>d$) and $h=d$ (case $h=d$), the common factor $(-1)^{l(\rho)-1}\frac{(l(\rho)-1)!}{\prod_{g=1}^{k-d}\mu_g!}$ factors out, leaving
\[
\sum_{h=d+1}^{k}\mu_{h-d} \;-\; l(\rho) \;=\; \sum_{g=1}^{k-d}\mu_g \;-\; l(\rho) \;=\; l(\rho) - l(\rho) \;=\; 0,
\]
since $\sum_{g=1}^{k-d}\mu_g$ is, by definition, the length $l(\rho)$ of $\rho\vdash k-d$.
\end{proof}

\begin{remark}[Analogue of Remark~11 of \cite{Ze}]
Since $\widetilde E_\lambda=\prod_h \widetilde E_h^{m_h}$, one may write, in terms of the ordinary partial Bell polynomials $\hat B_{k,t}$,
\[
U_k = -\sum_{t=1}^{k} \frac{1}{t}\,\hat B_{k,t}\big(-\widetilde E_1,\dots,-\widetilde E_{k-t+1}\big).
\]
Indeed, setting $z_h:=\widetilde E_h$ (formal, indeterminate) and $g(s):=\sum_{h\ge1}z_hs^h$, the definition of $U_k$ (Theorem~\ref{thm:main}) is exactly the coefficient of $s^k$ in $\log(1+g(s))=\sum_{m\ge1}\frac{(-1)^{m-1}}{m}g(s)^m$: expanding $g(s)^m$ by the multinomial theorem and collecting, for each partition $\lambda\vdash k$ with $l(\lambda)=m$ parts (multiplicities $m_h$), the $\binom{m}{m_1,m_2,\dots}=m!/\prod_hm_h!$ ways of assigning the $m$ factors of $g^m$ to the parts of $\lambda$, gives coefficient $\frac{(-1)^{m-1}}{m}\cdot\frac{m!}{\prod_hm_h!}=(-1)^{l(\lambda)-1}\frac{(l(\lambda)-1)!}{\prod_hm_h!}$ for $z_\lambda$, matching Theorem~\ref{thm:main}'s formula for $U_k$ term by term. The displayed identity is then simply the classical formula for the coefficients of $\log(1+g(s))$ in terms of the partial Bell polynomials of $g$'s own coefficients (Comtet, \emph{Advanced Combinatorics}, \cite{Comtet}, Ch.~3, \S3), applied with $z_h=\widetilde E_h$. This is a purely combinatorial power-series identity, independent of \cite{Ze} and of $x,y,r$.
\end{remark}

\section{Discriminant, Weyl algebra, and explicit examples}

The $U_k$ and $\mathcal D_d$, now that existence and uniqueness are settled, generate an algebraic structure examined in this section, before we illustrate everything explicitly for $n=2$ and $n=3$.

\subsection{Chain rule, discriminant, and Weyl algebra}

\begin{corollary}\label{cor:cor12}
Let $\phi$ be a $G(r,n)$-invariant holomorphic function on $\Omega'$ (Lemma~\ref{lem:descent}), written $\phi(x)=\Phi(U_1,\dots,U_n)$ for a holomorphic $\Phi$ (such a presentation always exists locally on $\Omega'$: by Lemma~\ref{lem:descent}, $\phi=\eta(x^r)$ for $\eta$ holomorphic and $S_n$-invariant on $Y'$, and the coordinates $u_1,\dots,u_n$ of \cite{Ze} are local coordinates on $Y'/S_n$, so $\eta=\Psi(u_1,\dots,u_n)$ for some holomorphic $\Psi$, giving $\Phi:=\Psi$ via $U_k(x)=u_k(x^r)$, Remark~\ref{rem:transfertusage}). Then, at every point of $\Omega$ where the $x_i^r$ are pairwise distinct,
\[
\Phi_{U_d} = \mathcal D_d\,\phi \qquad (1\le d\le n).
\]
\end{corollary}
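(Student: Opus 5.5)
Two routes suggest themselves. The first proves the identity directly in $x$, using only the derivation property of $\mathcal D_d$ (Proposition~\ref{prop:leibniz}) and the duality $\mathcal D_dU_k=\delta_{d,k}$ (Theorem~\ref{thm:main}). The second transfers Fact~\ref{fact:J} through $y=x^r$. I would give the direct route as the main argument and note the transfer as a check.

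\emph{Direct route.} Fix a point $x^0\in\Omega'$ and a neighbourhood on which $\phi=\Phi(U_1,\dots,U_n)$ with $\Phi$ holomorphic near $U(x^0)$. By Proposition~\ref{prop:leibniz}, $\mathcal D_d=\sum_i g_i(x)\partial_{x_i}$ is a first-order operator with no zeroth-order term. Its coefficients $g_i$ are holomorphic wherever the $x_i^r$ are pairwise distinct and $x_i\neq0$. The ordinary chain rule, $\partial_{x_i}\phi=\sum_k\Phi_{U_k}(U(x))\,\partial_{x_i}U_k$, therefore gives
\[
\mathcal D_d\phi=\sum_{k=1}^n \Phi_{U_k}(U(x))\,\mathcal D_dU_k .
\]
By Theorem~\ref{thm:main}, $\mathcal D_dU_k=\delta_{d,k}$, so the right-hand side collapses to $\Phi_{U_d}$. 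This is exactly the claim, at every point of $\Omega$ where the $x_i^r$ are pairwise distinct. No invariance of $\phi$ is used in this step beyond the existence of the presentation $\Phi$.

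\emph{The main obstacle: legitimacy of the presentation.} The real content is that $(U_1,\dots,U_n)$ are genuine local coordinates on the quotient near $x^0$, so that $\Phi$ exists and is holomorphic. I would argue as in the statement. Lemma~\ref{lem:descent} gives $\phi=\eta(x^r)$ with $\eta$ holomorphic and $S_n$-invariant on $Y'$. The $u_k$ are polynomial in $\tilde e_1,\dots,\tilde e_n$ with a triangular, unipotent relation, since $u_k=\tilde e_k+(\text{polynomial in }\tilde e_1,\dots,\tilde e_{k-1})$. Hence $(u_k)$ and $(e_h)$ are related by a polynomial automorphism of $\CC^n$. The map $e:Y'/S_n\to\CC^n$ is a local biholomorphism: its Jacobian is the Vandermonde determinant, which is nonzero when the $y_i$ are distinct. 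So locally $\eta=\Psi(u)$ with $\Psi$ holomorphic, and $\Phi:=\Psi$ works because $U_k(x)=u_k(x^r)$ by Remark~\ref{rem:transfertusage}.

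This also shows that $\Phi_{U_d}$ is well defined: $U(x)$ ranges over an open set. Since the linear map $(\mathcal D_1,\dots,\mathcal D_n)$ sends $(dU_k)$ to the identity matrix, the differentials $dU_k$ are independent at such points. So the formula determines $\Phi_{U_d}$ unambiguously.

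\emph{Cross-check via transfer.} Alternatively, apply Proposition~\ref{prop:transfert} to $L=D_d\in\mathcal L$ and to $\eta$. This gives $\mathcal D_d\phi=(D_d\eta)(x^r)$, which equals $\Psi_{u_d}(u(x^r))=\Phi_{U_d}(U(x))$ by Fact~\ref{fact:J}. The holomorphy hypothesis of Proposition~\ref{prop:transfert} holds near $y^0=(x^0)^r\in Y'$, since $D_d$ involves no transpositions.
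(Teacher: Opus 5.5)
Your direct route is the paper's own proof. The paper writes $\mathcal D_d=\sum_i g_i\partial_{x_i}$ as a vector field (from the proof of Proposition~\ref{prop:leibniz}), applies the chain rule to get $\mathcal D_d\phi=\sum_j(\mathcal D_dU_j)\Phi_{U_j}$, and collapses the sum with Theorem~\ref{thm:main}; your extra justification of the local presentation (Vandermonde Jacobian plus the unipotent triangular relation between the $u_k$ and the $\tilde e_h$) and your transfer cross-check via Fact~\ref{fact:J} are correct additions that the paper leaves to the parenthetical in the statement.
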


\begin{proof}
The domain considered ensures that $\mathcal D_d$ is a well-defined operator (nonzero denominators, Lemma~\ref{lem:lem3}). Since $\mathcal D_d=\sum_ig_i(x)\partial_{x_i}$ is explicitly a vector field (cf.\ the proof of Proposition~\ref{prop:leibniz}), the usual chain rule directly gives, writing $\phi(x)=\Phi(U_1(x),\dots,U_n(x))$,
\[
\mathcal D_d\phi = \sum_i g_i\,\partial_i\big[\Phi(U(x))\big] = \sum_i g_i\sum_{j=1}^n \Phi_{U_j}\,\partial_iU_j = \sum_{j=1}^n \Phi_{U_j}\sum_i g_i\partial_iU_j = \sum_{j=1}^n (\mathcal D_dU_j)\,\Phi_{U_j}.
\]
By Theorem~\ref{thm:main}, $\mathcal D_dU_j=\delta_{d,j}$, so the sum reduces to $\Phi_{U_d}$.
\end{proof}

\begin{lemma}\label{lem:disc}
The reflections of $G(r,n)$ are, up to conjugation, of two types:
\begin{itemize}
\item $t_i^{(\zeta)}: x_i\mapsto \zeta x_i$ ($\zeta\in\mu_r\setminus\{1\}$, other coordinates fixed), of order equal to the order of $\zeta$ in $\mu_r$, fixing pointwise the hyperplane $H=\{x_i=0\}$;
\item $\tau_{ij}^{(\zeta)}: x_i\mapsto \zeta x_j,\ x_j\mapsto \zeta^{-1}x_i$ (other coordinates fixed; $i<j$, $\zeta\in\mu_r$), of order $2$, fixing pointwise the hyperplane $H=\{x_i=\zeta x_j\}$.
\end{itemize}
The pointwise stabilizer of $\{x_i=0\}$ in $G(r,n)$ is exactly $\{t_i^{(\zeta)}:\zeta\in\mu_r\}\cong\mu_r$ (order $r$); that of $\{x_i=\zeta_0x_j\}$ is exactly $\{\mathrm{id},\tau_{ij}^{(\zeta_0)}\}$ (order $2$). The associated discriminant of $G(r,n)$, in the standard sense for complex reflection groups \cite[Ch.~2]{LT}, is the product of the linear forms defining the hyperplanes, each raised to the power $|\mathrm{Stab}|-1$ (note that this exponent is $r-1$, not $1$, for the hyperplanes $x_i=0$, so this is \emph{not} a reduced/squarefree polynomial):
\[
\mathrm{disc}_{G(r,n)}(x) \;=\; \prod_{i=1}^n x_i^{\,r-1} \prod_{1\le i<j\le n} \big(x_i^r - x_j^r\big).
\]
\end{lemma}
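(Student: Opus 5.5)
The plan is to classify reflections directly from the explicit form of elements $g=(\zeta;\sigma)$, using the fact that a reflection is a non-identity element whose fixed space $\ker(g-\mathrm{id})$ is a hyperplane. Equivalently, $\operatorname{rank}(g-\mathrm{id})=1$.

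First, write $g$ as a monomial matrix and decompose the index set $\NN_n$ into the cycles of $\sigma$. The fixed space of $g$ splits as a direct sum over these cycles. On a cycle $C$ of length $\ell$, the block of $g$ is a weighted cyclic permutation, and its fixed space has dimension $1$ if the product of the $\zeta_i$ over $C$ equals $1$, and $0$ otherwise.

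Next, count $\operatorname{codim}\ker(g-\mathrm{id})$ as $\sum_C(\ell_C-\dim\mathrm{Fix}_C)$ and require it to equal $1$. This forces exactly one of two situations:
\begin{itemize}
\item All cycles are fixed points with $\zeta=1$ except a single fixed point $i$ with $\zeta_i\neq1$. This gives $t_i^{(\zeta)}$, whose fixed hyperplane is $x_i=0$ and whose order is the order of $\zeta$.
\item All cycles are trivial except one transposition $(i\,j)$ whose two weights have product $1$. This gives $\tau_{ij}^{(\zeta)}$. Solving $x_i=\zeta x_j$ together with $x_j=\zeta^{-1}x_i$ yields the fixed hyperplane $\{x_i=\zeta x_j\}$, and $g^2=\mathrm{id}$ is a direct check.
\end{itemize}
No longer cycle and no combination of two nontrivial pieces can contribute codimension exactly $1$. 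The phrase ``up to conjugation'' is then harmless, since this argument lists every reflection, not merely representatives.

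For the pointwise stabilizers, note that an element fixing a hyperplane pointwise is either the identity or a reflection with that hyperplane as its fixed space. So the stabilizer of a hyperplane $H$ is $\{\mathrm{id}\}$ together with the reflections just listed whose fixed hyperplane is $H$. The hyperplane $\{x_i=0\}$ arises only from the $t_i^{(\zeta)}$, so its stabilizer is $\mu_r$, of order $r$. The hyperplane $\{x_i=\zeta_0x_j\}$ arises only from $\tau_{ij}^{(\zeta_0)}$: distinct $\zeta$ give distinct hyperplanes, and this hyperplane is never a coordinate hyperplane. Its stabilizer therefore has order $2$.

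Finally, the discriminant formula follows by definition, taking the product over hyperplanes of (linear form)$^{|\mathrm{Stab}|-1}$. This gives $\prod_i x_i^{r-1}\prod_{i<j}\prod_{\zeta\in\mu_r}(x_i-\zeta x_j)$. The factorization $\prod_{\zeta\in\mu_r}(x_i-\zeta x_j)=x_i^r-x_j^r$ then gives the stated formula, up to the harmless choice of scalar normalization of the linear forms.

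The main obstacle is the rank computation on a single weighted cycle, namely showing that its fixed space has dimension at most $1$, with equality exactly when the product of the weights is $1$. I would handle it by following a fixed vector around the cycle: each coordinate determines the next, and returning to the start multiplies by the product of the weights.
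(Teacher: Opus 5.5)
Your proof is correct, but it follows a different route from the paper's.

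\textbf{What the paper does.} It does not prove the classification of reflections. It only checks that $t_i^{(\zeta)}$ and $\tau_{ij}^{(\zeta)}$ are reflections, and cites Lehrer--Taylor for the fact that these exhaust all reflections up to conjugation. It then computes the two pointwise stabilizers \emph{independently} of that classification. For $\{x_i=0\}$ it writes out the fixed-point equations of $g=(\zeta;\sigma)$ at a generic point of the hyperplane. This forces $\sigma$ to fix every index and $\zeta_j=1$ for $j\ne i$, leaving $\zeta_i$ free. The hyperplane $\{x_i=\zeta_0x_j\}$ is dismissed with ``a similar argument''. The final factorization $\prod_{\zeta\in\mu_r}(x_i-\zeta x_j)=x_i^r-x_j^r$ is the same as yours.

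\textbf{What you do instead.} You prove the classification from scratch, via the cycle decomposition of $\sigma$ and the count $\operatorname{codim}\ker(g-\mathrm{id})=\sum_C(\ell_C-\dim\mathrm{Fix}_C)$. You then read off each stabilizer as $\{\mathrm{id}\}$ together with the reflections having that fixed hyperplane.

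\textbf{What your route buys.}
\begin{itemize}
\item The argument is self-contained.
\item It yields the literal list of \emph{all} reflections, not just representatives. So the set of reflecting hyperplanes entering the discriminant is actually determined, rather than imported from a reference.
\item It treats the stabilizer of $\{x_i=\zeta_0x_j\}$ as rigorously as the coordinate-hyperplane case.
\end{itemize}

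\textbf{What the paper's route buys.} Its stabilizer computation stands on its own, without knowing the full list of reflections. The discriminant formula, however, still depends on the cited fact that there are no other reflecting hyperplanes, which your argument supplies directly.
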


\begin{proof}
That $\tau_{ij}^{(\zeta)}$ and $t_i^{(\zeta)}$ are reflections (fixing a hyperplane pointwise, of finite order) is immediate by direct computation; they are, up to conjugation by $S_n$ and torsion, all the elements of $G(r,n)$ having an eigenspace of codimension $1$ for the eigenvalue $1$ (a standard fact for the groups $G(r,1,n)$, see \cite[Ch.~2]{LT}).

Stabilizer of $\{x_i=0\}$: if $g=(\zeta_1,\dots,\zeta_n;\sigma)$ fixes $(0,x_2,\dots,x_n)$ for \emph{all} generic $x_j$ ($j\ne i$), then necessarily $\sigma(i)=i$ (otherwise $g$ would send $0$ to a generic nonzero coordinate), $\sigma(j)=j$ and $\zeta_j=1$ for $j\ne i$ (otherwise the generic value of $x_j$ would not be fixed), while $\zeta_i$ is free (the image of $x_i=0$ remains $0$ whatever $\zeta_i$ is). Hence the stabilizer $\{t_i^{(\zeta)}:\zeta\in\mu_r\}\cong\mu_r$, of order $r$, giving the exponent $r-1$.

Stabilizer of $\{x_i=\zeta_0x_j\}$: a similar argument (fixing pointwise a codimension-$1$ hyperplane in a space where the other coordinates are generic) shows that only $\mathrm{id}$ and $\tau_{ij}^{(\zeta_0)}$ work, of order $2$, giving the exponent $1$.

Finally $\prod_{\zeta\in\mu_r}(x_i-\zeta x_j)=x_i^r-x_j^r$ since $\prod_{\zeta\in\mu_r}(t-\zeta)=t^r-1$ (the $\zeta\in\mu_r$ are exactly the roots of $t^r-1$), evaluated at $t=x_i/x_j$ and then multiplied by $x_j^r$. Grouping the $r$ hyperplanes $\{x_i=\zeta x_j\}_{\zeta\in\mu_r}$ for a fixed pair $(i,j)$, each with exponent $1$, gives the factor $x_i^r-x_j^r$; grouping the $n$ hyperplanes $\{x_i=0\}$, each with exponent $r-1$, gives $\prod_i x_i^{r-1}$. Hence the announced formula.
\end{proof}

\begin{corollary}\label{cor:weyl}
The families $\{U_k\}_{k=1}^n$ (mutually commuting, as products of functions) and $\{\mathcal D_d\}_{d=1}^n$ (mutually commuting: applying Corollary~\ref{cor:cor12} to $\phi=U_j$ itself (invariant, being a coordinate), $\mathcal D_dU_j=\delta_{d,j}$ is constant, whence $\mathcal D_{d'}\mathcal D_dU_j=\mathcal D_d\mathcal D_{d'}U_j=0$ for all $d,d',j$; the vector field $[\mathcal D_d,\mathcal D_{d'}]$ thus annihilates the $n$ coordinates $U_1,\dots,U_n$, whose Jacobian is invertible on the dense open set where the $x_i^r$ are pairwise distinct, so it vanishes identically there, and hence on all of $\Omega$ since its coefficients are rational, by the same extension argument as in Corollary~\ref{cor:cor8}), together with the commutation relation $[\mathcal D_d,U_k]=\delta_{d,k}$ (Theorem~\ref{thm:main}), generate a subalgebra $\mathcal W_n$ of $(W_n)_{\mathrm{disc}_{G(r,n)}}$ (the localization, at the discriminant of $G(r,n)$, of the usual Weyl algebra $W_n$ in the $x_i$) abstractly isomorphic, as an associative algebra, to the standard rank-$n$ Weyl algebra. All the denominators occurring in the $\mathcal D_I$ (Lemma~\ref{lem:lem3}) divide $\mathrm{disc}_{G(r,n)}$ (Lemma~\ref{lem:disc}), which is what places $\mathcal W_n$ inside $(W_n)_{\mathrm{disc}_{G(r,n)}}$ rather than inside $W_n$ itself. This generalizes the localization result of \cite{N} (established for $G(1,n)=S_n$, at the usual discriminant $\prod_{i<j}(x_i-x_j)$) to the group $G(r,n)$.
\end{corollary}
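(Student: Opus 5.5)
The statement to prove is Corollary~\ref{cor:weyl}: the $U_k$ and $\mathcal D_d$ generate a copy of the rank-$n$ Weyl algebra inside $(W_n)_{\mathrm{disc}_{G(r,n)}}$. I would proceed in three stages: membership in the localized Weyl algebra, the defining relations, and freeness (no further relations).

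\textbf{Membership.} Each $U_k$ is a polynomial by Theorem~\ref{thm:main}, so it lies in $W_n$. Each $\mathcal D_d=\sum_i g_i\partial_{x_i}$, with $g_i$ as in the proof of Proposition~\ref{prop:leibniz}. Every denominator of $f_{I,i}$ is a product of factors $x_i^{r-1}$ and $(x_j^r-x_i^r)$, each appearing at most once. By Lemma~\ref{lem:disc} these all divide $\mathrm{disc}_{G(r,n)}$. Hence each $g_i$, and so each $\mathcal D_d$, lies in $(W_n)_{\mathrm{disc}}$.

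\textbf{Relations.} These are as sketched in the statement.
\begin{itemize}
\item The $U_k$ commute with one another, since they are functions.
\item Because $\mathcal D_d$ is a vector field, the commutator $[\mathcal D_d,U_k]$ is multiplication by the function $\mathcal D_dU_k=\delta_{d,k}$ (Theorem~\ref{thm:main}).
\item The commutator $[\mathcal D_d,\mathcal D_{d'}]$ is again a vector field. It kills every $U_j$, because $\mathcal D_{d'}U_j$ is constant.
\item On the dense open set $\Omega'$, the Jacobian of $(U_1,\dots,U_n)$ with respect to $x$ is invertible. This follows from $U_k=u_k(x^r)$ (Remark~\ref{rem:transfertusage}), the fact that the $u_k$ are local coordinates on $Y'/S_n$, and the fact that $x\mapsto x^r$ is étale on $\Omega$.
\item A vector field annihilating a local coordinate system therefore vanishes on $\Omega'$. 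Its coefficients are rational, so it vanishes identically.
\end{itemize}

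\textbf{Isomorphism.} The relations give a surjection from the abstract Weyl algebra $A_n=\CC\langle q_k,p_d\rangle/([p_d,q_k]=\delta_{dk},\ldots)$ onto $\mathcal W_n$, sending $q_k\mapsto U_k$ and $p_d\mapsto\mathcal D_d$. Since $A_n$ is simple, this map is injective as soon as it is nonzero, and it clearly is nonzero, so it is an isomorphism.

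If one prefers to avoid quoting simplicity, there is a direct argument. Take a normally ordered element $\sum c_{\alpha\beta}U^\alpha\mathcal D^\beta$. Let $\beta_0$ be minimal among the multi-indices with some $c_{\alpha\beta_0}\ne 0$, and apply the element to the monomials $U^{\beta_0}$. Working in local coordinates $U$ on $\Omega'$, where $\mathcal D_d$ acts as $\partial_{U_d}$ by Corollary~\ref{cor:cor12}, a standard argument shows every coefficient $c_{\alpha\beta}$ is zero.

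\textbf{Main obstacle.} The hard step is the Jacobian claim: that the $U_k$ are genuinely local coordinates on $\Omega'$, which is also what Corollary~\ref{cor:cor12} relies on. I would justify it via the triangular relation between the $u_k$ and the $\tilde e_k$ (the $\tilde e_k$ are polynomials in $u_1,\dots,u_k$ with leading term $u_k$), together with the classical nonvanishing of $\det(\partial e_h/\partial y_i)=\pm\prod_{i<j}(y_i-y_j)$. Multiplying by $\det(\partial y/\partial x)=r^n\prod_i x_i^{r-1}$ gives a Jacobian equal to a nonzero constant times $\mathrm{disc}_{G(r,n)}$. This both proves invertibility on $\Omega'$ and explains why the localization is exactly at $\mathrm{disc}_{G(r,n)}$.
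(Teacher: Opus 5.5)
Your proof is correct. Its skeleton is the paper's: membership by inspecting the denominators of the $f_{I,i}$, then the relations exactly as sketched in the statement. The genuine difference is how you get injectivity of $A_n\to\mathcal W_n$.

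\textbf{Injectivity.} The paper uses the triangular form $U_k=\widetilde E_k+Q_k(\widetilde E_1,\dots,\widetilde E_{k-1})$ to show that $\CC[U_1,\dots,U_n]=\CC[x]^{G(r,n)}$ with the $U_k$ algebraically independent. It then calls the isomorphism ``purely formal'' because the $\mathcal D_d$ are the dual derivations, but never explicitly excludes relations mixing the $U$'s and the $\mathcal D$'s. Your appeal to the simplicity of $A_n$ supplies exactly that step, as does your alternative normal-ordering argument on $U^{\beta_0}$ with $\beta_0$ componentwise minimal. Your version is therefore the tighter one. The paper's route gives in exchange the extra fact, not needed here, that the $U_k$ generate the whole invariant ring.

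\textbf{Jacobian.} Your computation $\det(\partial U_k/\partial x_i)=c\cdot\mathrm{disc}_{G(r,n)}$ with $c\neq0$ makes explicit the Jacobian invertibility on $\Omega'$. The statement only inherits this from the $u_k$ being local coordinates on $Y'/S_n$. Combined with Cramer's rule applied to $\sum_i g_i\,\partial_iU_k=\delta_{d,k}$, it also gives an independent proof that the coefficients of each $\mathcal D_d$ lie in $\CC[x][\mathrm{disc}_{G(r,n)}^{-1}]$. This turns your closing remark about why the localization is exactly at $\mathrm{disc}_{G(r,n)}$ into an actual argument.
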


Abstractly, the existence of a Weyl-algebra structure of this kind is already guaranteed, for any complex reflection group, by the Sheppard--Todd--Chevalley theorem in its differential-operator form (surveyed, with worked examples, in \cite{Tr}, Theorem~5.3); what Corollary~\ref{cor:weyl} adds, for $G(r,n)$, is a closed-form set of generators and their commutation relations.

\begin{remark}[$\mathcal W_n$ is not the ring of invariant differential operators]
As Zemel notes for $S_n$ (Remark~14 of \cite{Ze}), the algebra $\mathcal W_n$ just constructed must not be confused with the ring $W_n^{G(r,n)}$ of \emph{genuinely} $G(r,n)$-invariant elements of $W_n$ itself (differential operators with polynomial coefficients, invariant under the action of $G(r,n)$ on $\CC^n$): since $\mathcal D_d\notin W_n$ (its coefficients are rational, not polynomial, with poles on $\mathrm{disc}_{G(r,n)}=0$), the inclusion $W_n^{G(r,n)}\subseteq\mathcal W_n$ is in general strict, and we make no claim of equality. What we have shown is the chain
\[
W_n^{G(r,n)} \;\subseteq\; \mathcal W_n \;\subseteq\; \big((W_n)_{\mathrm{disc}_{G(r,n)}}\big)^{G(r,n)},
\]
the last inclusion because each generator $U_k$ and $\mathcal D_d$ is itself $G(r,n)$-invariant (Proposition~\ref{prop:prop6} and the fact that $U_k$ is a $G(r,n)$-invariant polynomial), so every element of $\mathcal W_n$ is invariant. Whether $\mathcal W_n$ exhausts the invariant elements of the localization, or coincides with the full ring of $G(r,n)$-invariant differential operators on $\Omega$ in any other sense, is not addressed here.
\end{remark}

\begin{proof}
The isomorphism with the standard Weyl algebra is purely formal. First check that $\{U_k\}$ freely generate the polynomial ring: by the explicit formula of Theorem~\ref{thm:main}, only the partition $\lambda=(k)$ (among the $\lambda\vdash k$) contains a part equal to $k$, all others only involving $\widetilde E_h$ with $h<k$; thus $U_k=\widetilde E_k+Q_k(\widetilde E_1,\dots,\widetilde E_{k-1})$ for some polynomial $Q_k$. The change of variables $(E_1,\dots,E_n)\to(U_1,\dots,U_n)$ is thus triangular, with nonzero diagonal coefficients ($\widetilde E_k=\frac{(n-k)!}{n!}E_k$): it is a polynomial automorphism of $\CC[E_1,\dots,E_n]=\CC[x]^{G(r,n)}$ (Corollary~\ref{cor:base}), so that $\CC[U_1,\dots,U_n]=\CC[x]^{G(r,n)}$ and the $U_k$ are algebraically independent. Next, $\{\mathcal D_d\}$ form, by Theorem~\ref{thm:main} and Corollary~\ref{cor:cor12}, the dual system of derivations, which produces exactly the standard presentation $[\mathcal D_d,U_k]=\delta_{d,k}$, $[\mathcal D_d,\mathcal D_{d'}]=[U_k,U_{k'}]=0$ of the rank-$n$ Weyl algebra. The inclusion in the localization follows from direct inspection of the denominators of $\mathcal D_I$ in Lemma~\ref{lem:lem3}: these are powers of $x_i$ (the factor $x_i^{r-1}$ coming from $\Delta_i$) and differences $x_j^r-x_i^r$, all of which divide $\mathrm{disc}_{G(r,n)}$ by construction of the latter in Lemma~\ref{lem:disc}. This suffices to place the \emph{whole} algebra $\mathcal W_n$, not merely its individual generators, inside $(W_n)_{\mathrm{disc}_{G(r,n)}}$: if a denominator $q$ divides $\mathrm{disc}_{G(r,n)}$, then $1/q=(\mathrm{disc}_{G(r,n)}/q)\cdot\mathrm{disc}_{G(r,n)}^{-1}\in(W_n)_{\mathrm{disc}_{G(r,n)}}$ already, since $\mathrm{disc}_{G(r,n)}/q$ is a polynomial and $(W_n)_{\mathrm{disc}_{G(r,n)}}=W_n[\mathrm{disc}_{G(r,n)}^{-1}]$ contains $\mathrm{disc}_{G(r,n)}^{-1}$, and hence \emph{every} power of it; being a ring, $(W_n)_{\mathrm{disc}_{G(r,n)}}$ is then automatically closed under the sums and products (of arbitrarily high order) that occur in building $\mathcal D_d$ from the $\mathcal D_I$ and in generating $\mathcal W_n$, so no separate localization at a higher power of $\mathrm{disc}_{G(r,n)}$ is needed.
\end{proof}

\subsection{Explicit examples: \texorpdfstring{$n=2$ and $n=3$}{n=2 and n=3}}

The Weyl-algebra structure just established is best appreciated on small cases, which we now work out completely for $n=2$ and $n=3$.

For $n=2$, $E_1=x^r+y^r$, $E_2=x^ry^r$, and Theorem~\ref{thm:main} gives (a direct computation of the sum over the two partitions of $2$, $(2)$ and $(1,1)$, exactly as for $u_1,u_2$ in \cite{Ze})
\[
U_1 = \frac{x^r+y^r}{2}, \qquad U_2 = -\frac{(x^r-y^r)^2}{8}.
\]
We have explicitly, by Lemma~\ref{lem:lem3} and Proposition~\ref{prop:prop6} with $n=2$,
\[
\mathcal D_1 = \Delta_x+\Delta_y, \qquad \mathcal D_2 = 2\left(\frac{\Delta_x}{y^r-x^r}+\frac{\Delta_y}{x^r-y^r}\right).
\]
Let us verify Theorem~\ref{thm:main} directly in this case: with $\Delta_xU_1=\Delta_yU_1=1/2$, we get $\mathcal D_1U_1=1$. With $\Delta_xU_2=-\frac{x^r-y^r}{4}$, $\Delta_yU_2=\frac{x^r-y^r}{4}$, we get $\mathcal D_1U_2=0$. Next $\mathcal D_2U_2 = 2\big[\frac{-(x^r-y^r)/4}{y^r-x^r}+\frac{(x^r-y^r)/4}{x^r-y^r}\big]=2\big[\frac14+\frac14\big]=1$, and $\mathcal D_2U_1=2\big[\frac{1/2}{y^r-x^r}+\frac{1/2}{x^r-y^r}\big]=0$. The four identities $\mathcal D_jU_k=\delta_{jk}$ are thus verified explicitly.

For $\phi(x,y)=\Phi(U_1,U_2)$ invariant under $G(r,2)$, Corollary~\ref{cor:cor12} gives, for $x^r\ne y^r$, applying $\mathcal D_1,\mathcal D_2$ above directly to $\phi$,
\[
\Phi_{U_1} = \frac{\phi_x}{r x^{r-1}} + \frac{\phi_y}{r y^{r-1}}, \qquad
\Phi_{U_2} = \frac{2}{y^r-x^r}\left(\frac{\phi_x}{r x^{r-1}} - \frac{\phi_y}{r y^{r-1}}\right),
\]
which generalizes exactly the formulas from the introduction of \cite{Ze} (the case $r=1$ recovers them, with $U_1=(x+y)/2$, $U_2=-(x-y)^2/8$, $\Phi_{U_1}=\phi_x+\phi_y$, $\Phi_{U_2}=\frac{2(\phi_x-\phi_y)}{y-x}$).

\bigskip
\noindent\textbf{The case $n=3$.} Write $x,y,z$ for $x_1,x_2,x_3$. Here
\[
E_1=x^r+y^r+z^r,\qquad E_2=x^ry^r+x^rz^r+y^rz^r,\qquad E_3=x^ry^rz^r,
\]
and Theorem~\ref{thm:main} gives, after simplification (a short computation over the three partitions of $3$, namely $(3)$, $(2,1)$, $(1,1,1)$),
\begin{align*}
U_1&=\frac{x^r+y^r+z^r}{3},\\
U_2&=-\frac{(x^r-y^r)^2+(y^r-z^r)^2+(z^r-x^r)^2}{36},\\
U_3&=\frac{(x^r-U_1)(y^r-U_1)(z^r-U_1)}{6}.
\end{align*}
(The closed form for $U_3$ follows from the classical identity $a+b+c=0\Rightarrow a^3+b^3+c^3=3abc$, applied to $a:=x^r-U_1$, $b:=y^r-U_1$, $c:=z^r-U_1$, which by construction satisfy $a+b+c=0$.)

By Lemma~\ref{lem:lem3} and Proposition~\ref{prop:prop6} with $n=3$,
\[
\mathcal D_1=\Delta_x+\Delta_y+\Delta_z,
\]
\[
\mathcal D_2=2\left[\Delta_x\Big(\frac{1}{y^r-x^r}+\frac{1}{z^r-x^r}\Big)+\Delta_y\Big(\frac{1}{x^r-y^r}+\frac{1}{z^r-y^r}\Big)+\Delta_z\Big(\frac{1}{x^r-z^r}+\frac{1}{y^r-z^r}\Big)\right],
\]
\[
\mathcal D_3=6\left[\frac{\Delta_x}{(y^r-x^r)(z^r-x^r)}+\frac{\Delta_y}{(x^r-y^r)(z^r-y^r)}+\frac{\Delta_z}{(x^r-z^r)(y^r-z^r)}\right].
\]

Let us verify Theorem~\ref{thm:main} directly, using again $a=x^r-U_1$, $b=y^r-U_1$, $c=z^r-U_1$ (so $a+b+c=0$). Working this out gives
\begin{align*}
\Delta_xU_1&=\Delta_yU_1=\Delta_zU_1=\frac13,\\
\Delta_xU_2&=-\frac a6,\qquad \Delta_yU_2=-\frac b6,\qquad \Delta_zU_2=-\frac c6,\\
\Delta_xU_3&=\frac{a^2+2bc}{18}\quad\text{(and cyclically for }\Delta_yU_3,\Delta_zU_3\text{)}.
\end{align*}

All nine identities $\mathcal D_jU_k=\delta_{jk}$ reduce, after substituting these values into the formulas for $\mathcal D_1,\mathcal D_2,\mathcal D_3$ above, to sums of the form $\sum_{\mathrm{cyc}}a^m/((a-b)(a-c))$ (or the same with $bc$ in place of a power of $a$). The following elementary lemma evaluates all of them at once, replacing the case-by-case verification of the remaining five identities by a single short computation.

\begin{lemma}[Divided differences of monomials at three points summing to zero]\label{lem:divdiff}
Let $a,b,c\in\CC$ be pairwise distinct with $a+b+c=0$. Then
\[
\sum_{\mathrm{cyc}}\frac{a^m}{(a-b)(a-c)} \;=\; \begin{cases} 0, & m=0,1,3,\\ 1, & m=2,\end{cases} \qquad\text{and}\qquad bc=a^2+e_2,\ \ ca=b^2+e_2,\ \ ab=c^2+e_2,
\]
where $e_2:=ab+bc+ca$ and $\sum_{\mathrm{cyc}}$ denotes the sum of the three terms obtained by cyclically permuting $(a,b,c)$.
\end{lemma}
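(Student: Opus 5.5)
The plan is to recognise the cyclic sum as the second divided difference of $t\mapsto t^m$ at the three nodes $a,b,c$, and to read it off from Lagrange interpolation. The cyclic permutations of $(a,b,c)$ give exactly the three terms
\[
\frac{a^m}{(a-b)(a-c)}+\frac{b^m}{(b-c)(b-a)}+\frac{c^m}{(c-a)(c-b)}.
\]
This is precisely the coefficient of $t^2$ in the Lagrange interpolation polynomial $L_m(t)$ of degree $\le 2$ that agrees with $t^m$ at the pairwise distinct points $a,b,c$. This holds because each Lagrange basis polynomial $\frac{(t-b)(t-c)}{(a-b)(a-c)}$, and its cyclic images, has leading coefficient $1/((a-b)(a-c))$, and so on. The pairwise distinctness hypothesis is used exactly here, so that the interpolation problem is well posed and unique.

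The cases then split as follows. For $m=0,1,2$, the polynomial $t^m$ already has degree $\le2$, so by uniqueness of interpolation $L_m(t)=t^m$. Its $t^2$-coefficient is therefore $0,0,1$ respectively, which gives the first three values.

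For $m=3$, I set $L_3(t):=t^3-(t-a)(t-b)(t-c)$. This polynomial has degree $\le 2$ and agrees with $t^3$ at $a,b,c$, so it is the interpolant. Expanding, its $t^2$-coefficient is $a+b+c$, which vanishes by hypothesis. This is the only place where $a+b+c=0$ enters the first assertion. Equivalently, the sum equals the complete homogeneous polynomial $h_{m-2}(a,b,c)$, but the interpolation argument avoids needing that general identity.

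For the second assertion, I would simply expand $a^2+e_2=a^2+ab+ac+bc=a(a+b+c)+bc=bc$, and obtain the other two identities by cyclic permutation.

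I expect no real obstacle. The only point needing care is the identification of the cyclic sum with the leading coefficient of the interpolant. That requires checking that the denominators produced by cyclic permutation are $(b-c)(b-a)$ and $(c-a)(c-b)$, so that these are genuinely the Lagrange denominators with the correct signs, rather than, say, $(b-c)(b-a)$ with a flipped factor.
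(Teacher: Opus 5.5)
Your proposal is correct and matches the paper's proof essentially step for step. The paper also identifies the cyclic sum with the $t^2$-coefficient of the Lagrange interpolant $L_m$, reads off $0,0,1$ from $L_m(t)=t^m$ for $m\le 2$, takes $L_3(t)=t^3-(t-a)(t-b)(t-c)$ with $t^2$-coefficient $a+b+c=0$, and derives $bc=a^2+e_2$ (and its cyclic images) from $b+c=-a$.
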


\begin{proof}
For $0\le m\le3$, let $L_m(t)$ be the unique polynomial of degree $\le2$ with $L_m(a)=a^m$, $L_m(b)=b^m$, $L_m(c)=c^m$ (Lagrange interpolation at the three distinct points $a,b,c$); by the Lagrange formula, $L_m(t)=\sum_{\mathrm{cyc}}a^m\frac{(t-b)(t-c)}{(a-b)(a-c)}$, so the coefficient of $t^2$ in $L_m$ is exactly $\sum_{\mathrm{cyc}}a^m/((a-b)(a-c))$. For $m=0,1,2$, $t^m$ already has degree $\le2$, so $L_m(t)=t^m$ identically, and the coefficient of $t^2$ is $0,0,1$ respectively. For $m=3$: since $L_3$ interpolates $t^3$ at $a,b,c$, the cubic $t^3-L_3(t)$ vanishes at $a,b,c$, hence equals $(t-a)(t-b)(t-c)$ times a constant, which is $1$ by comparing leading coefficients; thus $L_3(t)=t^3-(t-a)(t-b)(t-c)=(a+b+c)t^2-(ab+bc+ca)t+abc$, whose coefficient of $t^2$ is $a+b+c=0$ by hypothesis. This proves the first display. For the second: $a+b+c=0$ gives $b+c=-a$, so $e_2=ab+bc+ca=a(b+c)+bc=-a^2+bc$, i.e.\ $bc=a^2+e_2$; the other two identities follow by cyclic symmetry.
\end{proof}

Applying Lemma~\ref{lem:divdiff}:
\begin{align*}
\mathcal D_1U_1&=3\cdot\frac13=1,\\
\mathcal D_1U_2&=-\frac{a+b+c}{6}=0,\\
\mathcal D_1U_3&=\frac{(a^2+2bc)+(b^2+2ac)+(c^2+2ab)}{18}=\frac{(a+b+c)^2}{18}=0,
\end{align*}
using $a+b+c=0$ directly (no part of the lemma is needed for these three, already noted above). For $\mathcal D_2U_2$: each of the $\binom32=3$ subsets $I=\{i,j\}$ contributes to $\mathcal D_2$ a pair of terms of the form $\frac{-a_i/6}{a_j-a_i}+\frac{-a_j/6}{a_i-a_j}=-\frac16\Big(\frac{a_i}{a_j-a_i}+\frac{a_j}{a_i-a_j}\Big)=-\frac16\cdot(-1)=\frac16$ (using the elementary identity $\frac{u}{v-u}+\frac{v}{u-v}=-1$), so $\mathcal D_2U_2 = 2\cdot3\cdot\frac16 = 1$.

For the five remaining identities: since $\Delta_xU_1=\Delta_yU_1=\Delta_zU_1=1/3$ is the same constant for all three variables,
\begin{align*}
\mathcal D_2U_1 &= \frac23\left[\Big(\frac1{b-a}+\frac1{c-a}\Big)+\Big(\frac1{a-b}+\frac1{c-b}\Big)+\Big(\frac1{a-c}+\frac1{b-c}\Big)\right] \\
&= \frac23\left[\Big(\frac1{b-a}+\frac1{a-b}\Big)+\Big(\frac1{c-a}+\frac1{a-c}\Big)+\Big(\frac1{c-b}+\frac1{b-c}\Big)\right] \;=\; 0,
\end{align*}
the six terms cancelling pairwise ($\frac1{b-a}+\frac1{a-b}=0$, and likewise for the other two pairs); no part of Lemma~\ref{lem:divdiff} is even needed for this one. Similarly, since $(y^r-x^r)(z^r-x^r)=(b-a)(c-a)=(a-b)(a-c)$,
\[
\mathcal D_3U_1 = 6\cdot\frac13\sum_{\mathrm{cyc}}\frac1{(a-b)(a-c)} = 2\cdot0 = 0
\]
by the case $m=0$ of Lemma~\ref{lem:divdiff}. Next, using $\Delta_xU_3=(a^2+2bc)/18$ and $(y^r-x^r)=-( a-b)$, $(z^r-x^r)=-(a-c)$ (so $1/(y^r-x^r)+1/(z^r-x^r)=-3a/[(a-b)(a-c)]$, since $b+c=-a$ gives $\frac1{b-a}+\frac1{c-a}=\frac{(c-a)+(b-a)}{(b-a)(c-a)}=\frac{-3a}{(a-b)(a-c)}$):
\begin{align*}
\mathcal D_2U_3 &= 2\sum_{\mathrm{cyc}}\frac{a^2+2bc}{18}\cdot\frac{-3a}{(a-b)(a-c)} \;=\; -\frac13\sum_{\mathrm{cyc}}\frac{a^3+2abc}{(a-b)(a-c)} \\
&= -\frac13\Big[\sum_{\mathrm{cyc}}\frac{a^3}{(a-b)(a-c)} + 2abc\sum_{\mathrm{cyc}}\frac1{(a-b)(a-c)}\Big] \;=\; -\frac13[0+0]\;=\;0,
\end{align*}
by the cases $m=3$ and $m=0$ of Lemma~\ref{lem:divdiff}. Finally, using the same $(a-b)(a-c)$ denominators as for $\mathcal D_3U_1$,
\[
\mathcal D_3U_2 = 6\sum_{\mathrm{cyc}}\frac{-a/6}{(a-b)(a-c)} = -\sum_{\mathrm{cyc}}\frac{a}{(a-b)(a-c)} = 0
\]
by the case $m=1$, and
\begin{align*}
\mathcal D_3U_3 &= 6\sum_{\mathrm{cyc}}\frac{(a^2+2bc)/18}{(a-b)(a-c)} \;=\; \frac13\Big[\sum_{\mathrm{cyc}}\frac{a^2}{(a-b)(a-c)} + 2\sum_{\mathrm{cyc}}\frac{bc}{(a-b)(a-c)}\Big] \\
&= \frac13\Big[1 + 2\Big(\sum_{\mathrm{cyc}}\frac{a^2}{(a-b)(a-c)} + e_2\sum_{\mathrm{cyc}}\frac1{(a-b)(a-c)}\Big)\Big] \;=\; \frac13[1+2(1+0)]\;=\;1,
\end{align*}
using $bc=a^2+e_2$ (Lemma~\ref{lem:divdiff}) and then the cases $m=2,0$ again. All nine identities $\mathcal D_jU_k=\delta_{jk}$ ($1\le j,k\le3$) are thus fully demonstrated, confirming Theorem~\ref{thm:main} for $n=3$.

\section{The total diagonal of $G(r,n)$}\label{sec:diagonale}

This section addresses the first of the two points left open at the end of the study of Section~1: the generalization of Theorem~16 and Corollary~18 of \cite{Ze} (the behavior of the coordinates $u_k$ and their derivatives along the \emph{total diagonal} $y_1=\cdots=y_n$) to the group $G(r,n)$. It uses Facts~\ref{fact:K} and~\ref{fact:L} recalled in Section~\ref{sec:rappels}.

\subsection{Geometric structure of the preimage of the total diagonal}

\begin{definition}
Set $\mathcal T := \{x\in\CC^n : x_1^r=x_2^r=\cdots=x_n^r\}$, the preimage under $x\mapsto x^r$ of the total diagonal of \cite{Ze}.
\end{definition}

\begin{proposition}\label{prop:structT}
$\mathcal T$ is $G(r,n)$-stable, and
\[
\mathcal T \;=\; \bigcup_{\zeta\in\mu_r^{n-1}} L_\zeta, \qquad L_\zeta := \{(a,\zeta_2a,\dots,\zeta_na) : a\in\CC\} \quad (\zeta=(\zeta_2,\dots,\zeta_n)),
\]
a union of $r^{n-1}$ lines through the origin, pairwise disjoint away from the origin, where they all meet. The group $G(r,n)$ acts transitively on the set of these $r^{n-1}$ lines.
\end{proposition}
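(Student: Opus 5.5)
The plan is to verify the four assertions in turn: stability, the decomposition into lines, disjointness away from the origin, and transitivity.

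\emph{Stability.} Let $g=(\zeta_1,\dots,\zeta_n;\sigma)$ and $x\in\mathcal T$. The $i$-th coordinate of $g\cdot x$ is $\zeta_i x_{\sigma^{-1}(i)}$, so its $r$-th power is $x_{\sigma^{-1}(i)}^r$. Hence the $r$-th powers of the coordinates of $g\cdot x$ are a permutation of those of $x$. Since the latter are all equal, so are the former, and $g\cdot x\in\mathcal T$.

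\emph{Decomposition.} For the inclusion $\supseteq$, each point $(a,\zeta_2a,\dots,\zeta_na)$ has all $r$-th powers equal to $a^r$, because $\zeta_j^r=1$. For the converse, take $x\in\mathcal T$ and set $a:=x_1$.
\begin{itemize}
\item If $a=0$, then $x_j^r=0$ forces $x_j=0$ for all $j$, so $x=0$, which lies on every $L_\zeta$.
\item If $a\ne0$, then $(x_j/a)^r=1$, so $\zeta_j:=x_j/a\in\mu_r$, and $x\in L_\zeta$.
\end{itemize}
There are exactly $r^{n-1}$ choices of $\zeta\in\mu_r^{n-1}$. Each $L_\zeta$ is a line through the origin, since its direction vector $(1,\zeta_2,\dots,\zeta_n)$ is nonzero.

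\emph{Disjointness away from $0$.} This also shows that the lines are pairwise distinct. Suppose $x\ne0$ lies on both $L_\zeta$ and $L_{\zeta'}$. Then $x_1=a\ne0$, because $a=0$ would force $x=0$. The ratios $x_j/x_1$ then recover both $\zeta_j$ and $\zeta'_j$, so $\zeta=\zeta'$.

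\emph{Transitivity.} It suffices to send the base line $L_{(1,\dots,1)}$ to an arbitrary $L_\zeta$. Take the toric element $g=(1,\zeta_2,\dots,\zeta_n;\mathrm{id})$. It maps $(a,a,\dots,a)$ to $(a,\zeta_2a,\dots,\zeta_na)$, so $g\cdot L_{(1,\dots,1)}=L_\zeta$.

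Since $G(r,n)$ acts linearly and preserves $\mathcal T$, it permutes the lines. Indeed, the image of a line is a line contained in $\mathcal T$. By the decomposition, any line through $0$ contained in $\mathcal T$ is one of the $L_\zeta$: its nonzero points lie in single lines $L_\zeta$, and a line cannot be covered by finitely many distinct lines through $0$ other than itself.

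No step is a genuine obstacle; all four are elementary. The only point needing care is the edge case $a=0$ in the decomposition and in the disjointness argument, which the argument above handles explicitly.
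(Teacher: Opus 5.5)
Your proof is correct and follows the paper's argument step for step: the same case split on $x_1=0$ for the decomposition and for disjointness, and the same kind of toric element for transitivity. The paper sends $L_\zeta$ to $L_{\zeta'}$ directly using the ratios $\zeta_i'/\zeta_i$, while you route through the base line via $(1,\zeta_2,\dots,\zeta_n;\mathrm{id})$; you check stability by direct computation where the paper uses the intertwining of $x\mapsto x^r$ with the $S_n$-action, which amounts to the same thing, and your extra check that $G(r,n)$ genuinely permutes the lines is a detail the paper leaves implicit.
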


\begin{proof}
\emph{Decomposition.} Let $x\in\mathcal T$. If $x_1=0$, then $x_i^r=x_1^r=0$ for all $i$, so $x=0$. If $x_1\ne0$, then for each $i$, $(x_i/x_1)^r=1$, so $x_i=\zeta_ix_1$ for a unique $\zeta_i\in\mu_r$: $x\in L_{(\zeta_2,\dots,\zeta_n)}$ with $a=x_1$. Conversely every point of $L_\zeta$ lies in $\mathcal T$ since $(\zeta_ia)^r=a^r$. This establishes $\mathcal T=\bigcup_\zeta L_\zeta$. If $x\in L_\zeta\cap L_{\zeta'}$ with $x_1=a\ne0$, the uniqueness of $\zeta_i$ above forces $\zeta=\zeta'$; the lines are thus disjoint away from the origin, which is common to all of them (take $a=0$).

\emph{$G(r,n)$-stability.} $\mathcal T$ is the preimage, under the map $x\mapsto x^r$ (which intertwines the action of $G(r,n)$ on $x$ with that of $S_n$ on $y$), of the total diagonal of $y$-space, which is $S_n$-stable; hence $\mathcal T$ is $G(r,n)$-stable.

\emph{Transitivity.} For $\zeta,\zeta'\in\mu_r^{n-1}$, set $\eta_1:=1$ and $\eta_i:=\zeta_i'/\zeta_i$ for $i\ge2$. The element $(\eta_1,\dots,\eta_n;\mathrm{id})\in\mu_r^n\subseteq G(r,n)$ sends $L_\zeta$ to $L_{\zeta'}$.
\end{proof}

\subsection{Vanishing of the coordinates $U_k$}

The next question is how the dual coordinates $U_k$ themselves behave on $\mathcal T$, whose geometry we have just described.

\begin{corollary}[Analogue of Corollary~18]\label{cor:diag}
At the point $x^0=(a,\zeta_2a,\dots,\zeta_na)\in\mathcal T$, we have $U_1(x^0)=a^r$ and $U_k(x^0)=0$ for $2\le k\le n$. Conversely,
\[
\mathcal T = \{x\in\CC^n : U_2(x)=\cdots=U_n(x)=0\}.
\]
\end{corollary}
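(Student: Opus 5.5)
The plan is to reduce everything to the $y$-side via the identity $U_k(x)=u_k(x^r)$ of Remark~\ref{rem:transfertusage}. This identity is a polynomial identity on all of $\CC^n$: it follows from Theorem~\ref{thm:main}, whose short proof shows $U_k:=u_k(x^r)$ directly. We then combine it with Fact~\ref{fact:L} and the decomposition of Proposition~\ref{prop:structT}.

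\emph{Values on $\mathcal T$.} Take $x^0=(a,\zeta_2a,\dots,\zeta_na)\in L_\zeta$. Its image $(x^0)^r=(a^r,\dots,a^r)$ lies on the total diagonal of $y$-space with common value $b:=a^r$, since $\zeta_i^r=1$. By Fact~\ref{fact:L}, $u_1(b,\dots,b)=b$ and $u_k(b,\dots,b)=0$ for $k\ge2$. Hence $U_1(x^0)=a^r$ and $U_k(x^0)=0$ for $2\le k\le n$. This also covers the origin, with $a=0$. Alternatively, one can check $u_1$ directly: $u_1=\tilde e_1=e_1/n$, so the value at the diagonal point is $b$. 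For $k\ge2$ one may also argue without Fact~\ref{fact:L}. On the diagonal, $\tilde e_h(b,\dots,b)=\binom nh b^h\frac{(n-h)!}{n!}=b^h/h!$, so the generating series of Remark (Bell polynomials) is $1+g(s)=e^{bs}$. Its logarithm is $bs$, which gives $U_1=b$ and $U_k=0$ for $k\ge2$. I would include this one-line check as a self-contained confirmation.

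\emph{Converse inclusion.} This is the main step. The inclusion $\mathcal T\subseteq\{U_2=\cdots=U_n=0\}$ is what we just proved. Conversely, suppose $U_k(x)=0$ for $2\le k\le n$, and set $y:=x^r$. Then $u_k(y)=0$ for $2\le k\le n$. By the algebraic characterization in Fact~\ref{fact:L}, $y$ lies on the total diagonal, i.e.\ $x_1^r=\cdots=x_n^r$, which means $x\in\mathcal T$ by definition.

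To avoid relying solely on the cited characterization, I would also give the direct argument through the logarithm identity. Since $\log(1+g(s))=\sum_k U_ks^k$ as a formal identity (with $z_h=\widetilde E_h$ and $\widetilde E_h=0$ for $h>n$ in the truncated sense), the vanishing of $U_2,\dots,U_n$ gives $1+\sum_{h\le n}\widetilde E_hs^h\equiv e^{U_1s}\pmod{s^{n+1}}$. Hence $\widetilde E_h=U_1^h/h!$ for $h\le n$, i.e.\ $e_h(y)=\binom nh (e_1/n)^h$. Therefore $\prod_i(t-y_i)=(t-e_1/n)^n$, so all $y_i$ are equal.

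The only delicate point is justifying that the truncated series identity really matches the coefficient definition of $U_k$ for $k\le n$. This holds because the coefficient of $s^k$ in $\log(1+g)$ involves only $z_h$ with $h\le k\le n$, so higher $\widetilde E_h$ never enter.
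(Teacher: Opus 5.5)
Your proof is correct and follows the paper's own argument. The identity $U_k(x)=u_k(x^r)$ from Remark~\ref{rem:transfertusage}, combined with Fact~\ref{fact:L}, gives the values at $(x^0)^r=(a^r,\dots,a^r)$, and the converse inclusion follows by taking preimages of $\{u_2=\cdots=u_n=0\}$ under $x\mapsto x^r$. Your supplementary $\log(1+g(s))$ argument is also sound: $1+g\equiv e^{U_1s}\pmod{s^{n+1}}$ forces $\prod_i(t-y_i)=(t-e_1(y)/n)^n$, so it proves the part of Fact~\ref{fact:L} you need instead of citing \cite{Ze} for it.
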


\begin{proof}
By Remark~\ref{rem:transfertusage}, $U_k(x)=u_k(x^r)$, and for $x^0\in\mathcal T$, $(x^0)^r=(a^r,\dots,a^r)$ is a point of the total diagonal, of common value $b=a^r$; Fact~\ref{fact:L} gives $U_1(x^0)=u_1(b)=b=a^r$ and $U_k(x^0)=u_k(b,\dots,b)=0$ for $k\ge2$. Conversely, $\{U_2=\cdots=U_n=0\}=\{x : x^r\in\{u_2=\cdots=u_n=0\}\}=\{x:x^r\in\text{diagonal}\}=\mathcal T$, by Fact~\ref{fact:L} and the definition of $\mathcal T$.
\end{proof}

\subsection{$\Delta$-derivatives along $\mathcal T$}

Next come the $\Delta$-derivatives of the $U_k$, and how they behave on $\mathcal T$.

\begin{proposition}\label{prop:thm16delta}
Let $\prod_{q=1}^d \Delta_{i_q}$ be a product of $d$ operators $\Delta_i$ (indices possibly coinciding). If $d\ne k$, this product applied to $U_k$ vanishes at every point of $\mathcal T\cap\Omega$, i.e.\ at every point of $\mathcal T$ other than the origin.
\end{proposition}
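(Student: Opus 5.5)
The plan is to reduce the statement directly to Fact~\ref{fact:K} via the transfer principle. The operator $L:=\prod_{q=1}^d\partial_{y_{i_q}}$ is a composition of base operators, so $L\in\mathcal L$. Its transfer is exactly $\widetilde L=\prod_{q=1}^d\Delta_{i_q}$, since the $\Delta_i$ are substituted for the $\partial_{y_i}$ factor by factor and compositions are preserved. By Remark~\ref{rem:transfertusage}, $U_k(x)=u_k(x^r)$ with $u_k$ a polynomial, hence holomorphic on all of $\CC^n$. No transposition occurs in $L$, so the holomorphy hypothesis of Proposition~\ref{prop:transfert} is automatic. That proposition then gives, at every point $x\in\Omega$,
\[
\Big(\prod_{q=1}^d\Delta_{i_q}\Big)U_k(x)=\Big(\prod_{q=1}^d\partial_{y_{i_q}}u_k\Big)(x^r).
\]

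Next, take $x^0\in\mathcal T\cap\Omega$. By Proposition~\ref{prop:structT}, $x^0=(a,\zeta_2a,\dots,\zeta_na)$ with $a\ne0$. Then $(x^0)^r=(a^r,\dots,a^r)$ lies on the total diagonal $\{y_1=\cdots=y_n\}$. Fact~\ref{fact:K}, applied with $d\ne k$, says the right-hand side vanishes there, which proves the claim.

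It remains to justify the identification $\mathcal T\cap\Omega=\mathcal T\setminus\{0\}$. By the decomposition in Proposition~\ref{prop:structT}, a point of $\mathcal T$ with some coordinate zero has $x_1=0$, which forces $x=0$. Conversely, every point of $L_\zeta$ with $a\ne0$ has all coordinates $\zeta_ia\ne0$.

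The only delicate point is making sure the transfer lemma is applied on $\Omega$, where the $\Delta_i$ are defined. Each iterate $\Delta_{i_q}\cdots U_k$ is again of the form $\eta(x^r)$ with $\eta$ a polynomial, so the induction in Proposition~\ref{prop:transfert} runs with no domain issues. The origin must be excluded, since $\Delta_i$ is singular there; this matches the scope of the statement. I do not expect a genuine obstacle: the whole argument is a direct transfer of Fact~\ref{fact:K}.
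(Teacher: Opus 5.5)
Your proposal is correct and follows the paper's proof: you write $U_k=u_k(x^r)$, transfer $\prod_q\partial_{y_{i_q}}$ to $\prod_q\Delta_{i_q}$ on $\Omega$, and conclude with Fact~\ref{fact:K} because $(x^0)^r$ lies on the total diagonal. The paper iterates Lemma~\ref{lem:transfert} directly, which is exactly the composition case of Proposition~\ref{prop:transfert} that you invoke, and your extra check that $\mathcal T\cap\Omega=\mathcal T\setminus\{0\}$ just spells out what the paper treats as evident.
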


\begin{proof}
By Remark~\ref{rem:transfertusage}, $U_k(x)=u_k(x^r)$; by Lemma~\ref{lem:transfert} iterated (applied to $\eta=u_k$, valid precisely on $\Omega$, where each $\Delta_i$ is defined by its original formula), $\prod_q\Delta_{i_q}U_k = \big(\prod_q \partial_{y_{i_q}}u_k\big)(x^r)$. For $x\in\mathcal T\cap\Omega$, $x^r$ lies on the total diagonal; Fact~\ref{fact:K} gives the vanishing as soon as $d\ne k$.
\end{proof}

\begin{remark}
The origin is excluded from Proposition~\ref{prop:thm16delta} because $\Delta_i$ is not defined there by its original formula, and the argument above, resting on Lemma~\ref{lem:transfert} which is itself valid only on the regular domain $\Omega$, does not extend to it. The holomorphic extension of $\Delta_i$ established in Section~\ref{sec:trois-points} (Theorem~\ref{thm:origine}) reaches the larger extended domain $\widetilde\Omega\supsetneq\Omega$ (Section~\ref{sec:intro}), but only resolves the degeneracy at an \emph{isolated} vanishing coordinate $x_i=0$ (the other coordinates remaining generic, i.e.\ $\widetilde\Omega$ itself excludes points with two or more vanishing coordinates); it does not cover the origin of $\mathcal T$, where \emph{all} coordinates vanish simultaneously, hence lies outside $\widetilde\Omega$ as well. The behavior of $\Delta$-derivatives at that point accordingly remains open, consistently with the discussion at the end of Section~\ref{sec:diagonale}.
\end{remark}

\subsection{Raw derivatives $\partial_{x_i}$: a phenomenon specific to $G(r,n)$}

The $\Delta_i$ are not the most natural derivatives from the point of view of the ambient space $\CC^n$. Unlike the case $r=1$ (where $\Delta_i=\partial_i$), the ordinary derivative $\partial_{x_i}$ mixes orders when composed, owing to the Leibniz rule applied to the factor $rx_i^{r-1}$ (Example~\ref{ex:delta2}). The result below is \emph{specific to $G(r,n)$}: it is trivial for $r=1$.

\begin{lemma}[Structure formula]\label{lem:faadibruno}
Let $\eta$ be a holomorphic function of one variable, $\phi(x_i):=\eta(x_i^r)$. For every $d\ge1$, there exist constants $\kappa_{d,1},\dots,\kappa_{d,d}$ (depending on $d,r$ but not on $\eta$) such that
\[
\partial_{x_i}^d\phi(x_i) \;=\; \sum_{j=1}^{d} \kappa_{d,j}\;x_i^{\,jr-d}\;\eta^{(j)}(x_i^r),
\]
with the convention $\kappa_{d,j}=0$ as soon as $jr<d$ (so that each term is polynomial), and $\kappa_{d,d}=r^d$. The statement applies as is if $\eta$ is the restriction to a variable $y_i$ of a function of several variables $y_1,\dots,y_n$, the others being held fixed: $\eta^{(j)}$ then denotes the partial derivative $\partial_{y_i}^j$, and $\phi(x_i)$ the corresponding restriction, with the $x_j$ ($j\ne i$) fixed.
\end{lemma}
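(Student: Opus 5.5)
We argue by induction on $d$, keeping track of the exponent of $x_i$ attached to each derivative $\eta^{(j)}$. Write $x=x_i$ for short.

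For $d=1$ the chain rule gives $\partial_x\phi = r\,x^{r-1}\eta'(x^r)$. This is the claimed form with $\kappa_{1,1}=r$, and the exponent is $jr-d=r-1\ge 0$.

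For the inductive step, assume
\[
\partial_x^d\phi=\sum_{j=1}^d\kappa_{d,j}\,x^{jr-d}\,\eta^{(j)}(x^r)
\]
and differentiate each term by the ordinary Leibniz rule. Differentiating the power gives $(jr-d)\,x^{jr-d-1}\eta^{(j)}(x^r)$. Differentiating $\eta^{(j)}(x^r)$ gives $r\,x^{r-1}\eta^{(j+1)}(x^r)$, so that term becomes $r\,x^{(j+1)r-(d+1)}\eta^{(j+1)}(x^r)$. In both cases the exponent has the required form $j'r-(d+1)$. Collecting coefficients of $\eta^{(j)}$ yields the recursion
\[
\kappa_{d+1,j}=(jr-d)\,\kappa_{d,j}+r\,\kappa_{d,j-1},\qquad \kappa_{d,0}:=0,\ \kappa_{d,d+1}:=0 .
\]
This recursion involves only $d$, $j$, $r$, so the $\kappa_{d,j}$ do not depend on $\eta$. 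At the top index $j=d+1$ only the second term survives, so $\kappa_{d+1,d+1}=r\,\kappa_{d,d}=r^{d+1}$.

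The one delicate point is the convention $\kappa_{d,j}=0$ whenever $jr<d$, which guarantees that no negative powers of $x$ appear. We prove it inside the same induction.

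Suppose $jr<d+1$, i.e.\ $jr\le d$.
\begin{itemize}
\item In the second term, $(j-1)r\le d-r<d$ since $r\ge1$. By the induction hypothesis $\kappa_{d,j-1}=0$.
\item In the first term, if $jr<d$ then $\kappa_{d,j}=0$ by induction. If $jr=d$, the prefactor $jr-d$ vanishes.
\end{itemize}
Hence $\kappa_{d+1,j}=0$. This vanishing of the prefactor exactly at the boundary $jr=d$ is the step that needs care.

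So every surviving term has exponent $jr-d\ge0$ and is polynomial in $x$. Equivalently, the terms with $jr<d$, which would carry negative powers of $x$ from differentiating $x^0$, never arise. This also shows the identity holds at $x_i=0$ by continuity.

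Finally, the multivariable statement is immediate. Restricting to the variable $y_i$ with the other $y_j$ held fixed turns $\eta$ into a holomorphic function of one variable, and its ordinary derivatives are exactly the partial derivatives $\partial_{y_i}^j\eta$.
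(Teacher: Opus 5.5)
Your proposal is correct and follows the paper's proof essentially verbatim. You use the same induction on $d$, the same recursion $\kappa_{d+1,j}=(jr-d)\kappa_{d,j}+r\kappa_{d,j-1}$ with $\kappa_{d+1,d+1}=r\kappa_{d,d}=r^{d+1}$, and the same propagation of the vanishing convention, including the boundary case $jr=d$ where the prefactor $jr-d$ kills the first term.
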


\begin{proof}
Induction on $d$. For $d=1$: $\partial_i\phi=rx_i^{r-1}\eta^{(1)}(x_i^r)$, i.e.\ $\kappa_{1,1}=r$. Suppose the formula holds at order $d$, with $c_{d,j}(x_i):=\kappa_{d,j}x_i^{jr-d}$; then
\[
\partial_i^{d+1}\phi = \sum_{j=1}^d \Big[c_{d,j}'(x_i)\,\eta^{(j)}(x_i^r) + c_{d,j}(x_i)\cdot rx_i^{r-1}\,\eta^{(j+1)}(x_i^r)\Big].
\]
Since $c_{d,j}'(x_i)=\kappa_{d,j}(jr-d)\,x_i^{jr-d-1}$ is a homogeneous monomial of degree $jr-(d+1)$, and $rx_i^{r-1}c_{d,j}(x_i)=r\kappa_{d,j}\,x_i^{(j+1)r-(d+1)}$ a homogeneous monomial of degree $(j+1)r-(d+1)$, grouping, for each $j=1,\dots,d+1$, the coefficients of $\eta^{(j)}(x_i^r)$ gives an expression of the announced form with the recursion
\[
\kappa_{d+1,j} \;=\; \kappa_{d,j}\,(jr-d) \;+\; r\,\kappa_{d,j-1}
\]
(convention $\kappa_{d,0}=\kappa_{d,d+1}=0$). In particular $\kappa_{d+1,d+1}=r\,\kappa_{d,d}=r^{d+1}$, closing the recursion for the leading term. Since the $c_{d,j}$ remain, at each step, pure monomials, the form $c_{d,j}(x_i)=\kappa_{d,j}x_i^{jr-d}$ is preserved. The convention $\kappa_{d,j}=0$ as soon as $jr<d$ propagates by itself via the recursion $\kappa_{d+1,j}=\kappa_{d,j}(jr-d)+r\kappa_{d,j-1}$: if $jr<d$, then $\kappa_{d,j}=0$ (induction hypothesis) and $(j-1)r=jr-r<d$ as well (since $r\ge1$), so $\kappa_{d,j-1}=0$, hence $\kappa_{d+1,j}=0$; if $jr=d$ (the boundary case $jr<d+1$), the first term vanishes directly ($jr-d=0$) and $(j-1)r=jr-r<jr=d$ again gives $\kappa_{d,j-1}=0$, hence $\kappa_{d+1,j}=0$ as well. Thus the convention perpetuates itself to every order.
\end{proof}

\begin{proposition}[Multivariable structure formula]\label{prop:faadibruno-mixed}
Let $\eta$ be a holomorphic function of $n$ variables $y_1,\dots,y_n$, and $\phi(x):=\eta(x_1^r,\dots,x_n^r)$. Let $i_1,\dots,i_p$ ($p\ge1$) be distinct indices in $\{1,\dots,n\}$ and $d_1,\dots,d_p\ge1$. Then, holding all coordinates $x_j$ with $j\notin\{i_1,\dots,i_p\}$ fixed,
\[
\prod_{l=1}^p \partial_{x_{i_l}}^{d_l}\,\phi(x) \;=\; \sum_{j_1=1}^{d_1}\cdots\sum_{j_p=1}^{d_p} \Big(\prod_{l=1}^p \kappa_{d_l,j_l}\,x_{i_l}^{\,j_lr-d_l}\Big)\cdot \partial_{y_{i_1}}^{j_1}\cdots\partial_{y_{i_p}}^{j_p}\eta(x^r),
\]
with the same constants $\kappa_{d,j}$ as in Lemma~\ref{lem:faadibruno}.
\end{proposition}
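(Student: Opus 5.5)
The plan is to prove the formula by induction on the number $p$ of distinct differentiated variables, applying Lemma~\ref{lem:faadibruno} one variable at a time. The key observation is that the variables $x_{i_1},\dots,x_{i_p}$ are distinct. As a result, $\partial_{x_{i_l}}$ commutes with multiplication by any monomial in the other $x_{i_{l'}}$ ($l'\ne l$). It also acts on a function of the form $\theta(x^r)$ only through the single slot $y_{i_l}=x_{i_l}^r$. Since $\phi$ is holomorphic, the mixed partials commute, so the order in which the blocks $\partial_{x_{i_l}}^{d_l}$ are applied is irrelevant. We fix the order $l=p,p-1,\dots,1$ (innermost first).

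For $p=1$ the statement is exactly the multivariable reading of Lemma~\ref{lem:faadibruno}, with the other coordinates frozen. For the inductive step, assume the formula for $i_2,\dots,i_p$. It expresses $\prod_{l\ge2}\partial_{x_{i_l}}^{d_l}\phi$ as a finite sum of terms
\[
\Big(\prod_{l\ge2}\kappa_{d_l,j_l}x_{i_l}^{j_lr-d_l}\Big)\,\theta_{j_2,\dots,j_p}(x^r),\qquad \theta_{j_2,\dots,j_p}:=\partial_{y_{i_2}}^{j_2}\cdots\partial_{y_{i_p}}^{j_p}\eta .
\]
Now apply $\partial_{x_{i_1}}^{d_1}$ to each term. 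The monomial prefactor does not involve $x_{i_1}$ because $i_1\notin\{i_2,\dots,i_p\}$, so it is a constant for this derivative and factors out. For the remaining factor $\theta_{j_2,\dots,j_p}(x^r)$, the function $\theta_{j_2,\dots,j_p}$ is holomorphic in $y$. Viewed as a function of $x_{i_1}$ alone with all other $x_j$ fixed, it has the form $\tilde\eta(x_{i_1}^r)$, where $\tilde\eta$ is the restriction of $\theta_{j_2,\dots,j_p}$ to the slot $y_{i_1}$. Lemma~\ref{lem:faadibruno} (in its several-variable form stated there) therefore gives
\[
\partial_{x_{i_1}}^{d_1}\big[\theta_{j_2,\dots,j_p}(x^r)\big]=\sum_{j_1=1}^{d_1}\kappa_{d_1,j_1}x_{i_1}^{j_1r-d_1}\big(\partial_{y_{i_1}}^{j_1}\theta_{j_2,\dots,j_p}\big)(x^r).
\]
Substituting this back, and using $\partial_{y_{i_1}}^{j_1}\theta_{j_2,\dots,j_p}=\partial_{y_{i_1}}^{j_1}\cdots\partial_{y_{i_p}}^{j_p}\eta$ (Schwarz), yields the displayed formula. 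The constants are the same $\kappa_{d,j}$ because they were shown in Lemma~\ref{lem:faadibruno} to be independent of $\eta$.

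The only delicate point is the bookkeeping that justifies treating the monomial prefactors as constants and applying the one-variable lemma to a derivative of $\eta$ rather than to $\eta$ itself. Both follow from the distinctness of the indices and from the $\eta$-independence of the $\kappa_{d,j}$. There is also no domain issue: all operators here are the raw $\partial_{x_i}$ and the exponents $j_lr-d_l$ are nonnegative whenever $\kappa_{d_l,j_l}\neq0$, so the identity holds on all of $\CC^n$ where $\eta(x^r)$ is defined.
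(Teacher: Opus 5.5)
Your proof is correct and follows essentially the same route as the paper. Both argue by induction on $p$: the monomial prefactors in the already-treated variables are carried through as constants, and Lemma~\ref{lem:faadibruno} is applied to a $y$-derivative of $\eta$ in the one remaining slot; the only difference is cosmetic, since you peel off $i_1$ where the paper peels off $i_p$.
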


\begin{proof}
Induction on $p$. The case $p=1$ is exactly Lemma~\ref{lem:faadibruno} (applied to the variable $y_{i_1}$ of the multivariable function $\eta$, all other $y_j$ held fixed, as covered by the last sentence of that lemma's statement). Assume the formula holds for $p-1$ distinct indices $i_1,\dots,i_{p-1}$, for every choice of holomorphic $\eta$ and every choice of the remaining index $i_p$; we prove it for $p$. Since mixed partial derivatives with respect to distinct independent variables $x_{i_1},\dots,x_{i_p}$ commute, we may apply the $p$ derivatives in the order $x_{i_1}$ first, \dots, $x_{i_p}$ last. By the inductive hypothesis applied to $\eta$ and the first $p-1$ indices,
\[
\prod_{l=1}^{p-1}\partial_{x_{i_l}}^{d_l}\,\phi(x) \;=\; \sum_{j_1,\dots,j_{p-1}} \Big(\prod_{l=1}^{p-1}\kappa_{d_l,j_l}x_{i_l}^{j_lr-d_l}\Big)\cdot \psi_{j_1,\dots,j_{p-1}}(x),
\]
where $\psi_{j_1,\dots,j_{p-1}}(x):=\big(\partial_{y_{i_1}}^{j_1}\cdots\partial_{y_{i_{p-1}}}^{j_{p-1}}\eta\big)(x^r)$ is itself of the form $\eta'(x^r)$ for the holomorphic function $\eta':=\partial_{y_{i_1}}^{j_1}\cdots\partial_{y_{i_{p-1}}}^{j_{p-1}}\eta$. Now apply $\partial_{x_{i_p}}^{d_p}$ to this finite sum. Since $i_p\notin\{i_1,\dots,i_{p-1}\}$, each coefficient $x_{i_l}^{j_lr-d_l}$ ($l<p$) is independent of $x_{i_p}$ and is simply carried through the derivative, by linearity, since the product rule then reduces to differentiating only the second factor. By Lemma~\ref{lem:faadibruno} applied to $\eta'$ and the variable $y_{i_p}$,
\[
\partial_{x_{i_p}}^{d_p}\psi_{j_1,\dots,j_{p-1}}(x) \;=\; \sum_{j_p=1}^{d_p} \kappa_{d_p,j_p}\,x_{i_p}^{j_pr-d_p}\,\big(\partial_{y_{i_p}}^{j_p}\eta'\big)(x^r) \;=\; \sum_{j_p=1}^{d_p} \kappa_{d_p,j_p}\,x_{i_p}^{j_pr-d_p}\,\big(\partial_{y_{i_1}}^{j_1}\cdots\partial_{y_{i_p}}^{j_p}\eta\big)(x^r).
\]
Substituting back into the sum over $j_1,\dots,j_{p-1}$ gives exactly the claimed formula for $p$ indices, closing the induction.
\end{proof}

\begin{theorem}[Analogue of Theorem~16 for raw derivatives]\label{thm:16brut}
Let $x^0=(a,\zeta_2a,\dots,\zeta_na)\in\mathcal T$ and $1\le i\le n$. Set $\gamma_{k,n}:=(-1)^{k-1}(k-1)!/n^k$.
\begin{enumerate}
\item If $d<k$, then $\partial_{x_i}^d U_k(x^0)=0$.
\item If $d=k$, then $\partial_{x_i}^k U_k(x^0) = r^k\,(x_i^0)^{k(r-1)}\,\gamma_{k,n}$. In particular, this quantity vanishes at the origin ($a=0$) as soon as $r\ge2$, but is nonzero at every other point of $\mathcal T$ (where $x_i^0=\zeta_ia\ne0$).
\item If $d>k$, three regimes arise according to the sign of the exponent $kr-d$. If $k<d<kr$ (so $kr-d\ge1$), then $\partial_{x_i}^dU_k(x^0) = \kappa_{d,k}\,(x_i^0)^{kr-d}\,\gamma_{k,n}$ (with $\kappa_{d,k}$ given by the recursion of Lemma~\ref{lem:faadibruno}), which vanishes at the origin ($a=0$, since the exponent $kr-d$ is a strictly positive integer) but is generically nonzero elsewhere on $\mathcal T$. If $d=kr$ (so $kr-d=0$), the same formula gives the constant $\partial_{x_i}^dU_k(x^0) = \kappa_{kr,k}\,\gamma_{k,n}$, independent of $x^0$ (in particular, not generally zero at the origin). If $d>kr$, then $\kappa_{d,k}=0$ \emph{identically} (this is the case $jr<d$ of the convention of Lemma~\ref{lem:faadibruno}, with $j=k$), so that $\partial_{x_i}^dU_k(x^0)=0$ at \emph{every} point of $\mathcal T$; this last vanishing follows directly from $\kappa_{d,k}=0$ and never requires forming the expression $(x_i^0)^{kr-d}$, whose exponent $kr-d$ is here a negative integer, at $x_i^0=0$.
\end{enumerate}
\end{theorem}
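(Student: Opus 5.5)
The proof applies Lemma~\ref{lem:faadibruno} to $U_k(x)=u_k(x^r)$ (Remark~\ref{rem:transfertusage}). We treat $x_i$ as the only variable and hold the other coordinates fixed, so $\eta=u_k$ is viewed as a function of $y_i$. This gives the polynomial identity
\[
\partial_{x_i}^d U_k(x)=\sum_{j=1}^{d}\kappa_{d,j}\,x_i^{\,jr-d}\,(\partial_{y_i}^j u_k)(x^r),
\]
valid on all of $\CC^n$, including the origin. Every term with $\kappa_{d,j}\neq0$ has $jr\ge d$ and is therefore a genuine polynomial. We then evaluate at $x^0\in\mathcal T$. There $(x^0)^r=(b,\dots,b)$ with $b=a^r$ lies on the total diagonal, so Fact~\ref{fact:K} kills every term with $j\ne k$: the single derivative $\partial_{y_i}^j u_k$ vanishes on the diagonal when $j\ne k$. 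This step needs no restriction to $\Omega$, since Fact~\ref{fact:K} holds at every diagonal point, including $b=0$. Only the term $j=k$ can survive, and it requires $k\le d$.

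By Fact~\ref{fact:L}, $\partial_{y_i}^k u_k$ is the constant $\gamma_{k,n}$ everywhere. Hence
\[
\partial_{x_i}^d U_k(x^0)=\kappa_{d,k}\,(x_i^0)^{kr-d}\,\gamma_{k,n}\quad\text{if }k\le d,
\]
and $\partial_{x_i}^d U_k(x^0)=0$ if $d<k$. The case $d<k$ is immediate: all indices $j\le d<k$ differ from $k$, which gives item~1.

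For item~2 ($d=k$), Lemma~\ref{lem:faadibruno} gives $\kappa_{k,k}=r^k$, and the exponent is $kr-k=k(r-1)$. If $r\ge2$ this exponent is positive, so the expression vanishes at $a=0$. It is nonzero elsewhere, because $x_i^0=\zeta_i a\ne0$ and $\gamma_{k,n}\ne0$.

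For item~3 we sort by the sign of $kr-d$.
\begin{itemize}
\item If $k<d<kr$, the formula holds as stated. The positive exponent forces vanishing at the origin.
\item If $d=kr$, the exponent is $0$, leaving the constant $\kappa_{kr,k}\gamma_{k,n}$.
\item If $d>kr$, the convention of Lemma~\ref{lem:faadibruno} gives $\kappa_{d,k}=0$. The $j=k$ term is therefore absent from the sum, and the value is $0$ without ever forming a negative power.
\end{itemize}

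The main point needing care is justifying evaluation at the origin and at points with $x_i^0=0$. The transfer lemma is valid only on $\Omega$, but the argument above avoids it. Lemma~\ref{lem:faadibruno} is a pure chain-rule identity with polynomial coefficients, valid everywhere. Facts~\ref{fact:K} and~\ref{fact:L} concern $u_k$ in the $y$-variables on the whole diagonal. The conclusion is thus a polynomial identity evaluated pointwise, with no localization at the discriminant.

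A secondary check concerns ``generically nonzero'' in the regime $k<d<kr$. This amounts to $\kappa_{d,k}\ne0$, which we would argue from the recursion when the claim is needed. For $j\ge1$ and $jr>d$, each step adds the strictly positive quantity $r\kappa_{d,j-1}$ to the nonnegative quantity $\kappa_{d,j}(jr-d)$, so the constants stay positive once they are nonzero. The statement only asserts generic nonvanishing, so this remark suffices.
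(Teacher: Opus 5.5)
Your proof is correct and follows the paper's own argument: apply Lemma~\ref{lem:faadibruno} to $U_k=u_k(x^r)$, let Fact~\ref{fact:K} kill every term $j\ne k$ on the diagonal, evaluate the surviving term with Fact~\ref{fact:L} and $\kappa_{k,k}=r^k$, and split by the sign of $kr-d$. Your remark that the identity is polynomial, and so may be evaluated at the origin, is a useful check the paper leaves implicit; in your positivity remark, however, the roles are swapped: the term that is strictly positive (once $\kappa_{d,k}>0$ and $d<kr$) is $\kappa_{d,k}(kr-d)$, while $r\kappa_{d,k-1}$ is only nonnegative, and this is how positivity propagates from $\kappa_{k,k}=r^k$ up to $d=kr$, a fact the paper asserts without proof.
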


\begin{proof}
Lemma~\ref{lem:faadibruno} with $\eta=u_k$ gives
\[
\partial_{x_i}^dU_k(x^0) = \sum_{j=1}^d \kappa_{d,j}\,(x_i^0)^{jr-d}\,\partial_{y_i}^ju_k\big((x^0)^r\big).
\]
Since $(x^0)^r$ lies on the total diagonal (common value $b=a^r$), Fact~\ref{fact:K} annihilates every term $j\ne k$: only $j=k$ can contribute, and only if it occurs in the sum, i.e.\ if $k\le d$. This gives point~1 (no term survives if $k>d$). For $d\ge k$, the term $j=k$ remains, with $\partial_{y_i}^ku_k(b,\dots,b)=\gamma_{k,n}$ by Fact~\ref{fact:L} (a constant independent of $b$, hence of $a$). For $d=k$, $\kappa_{k,k}=r^k$, giving point~2. For $k<d\le kr$, the surviving term has exponent $kr-d\ge0$, giving the first two regimes of point~3 directly. For $d>kr$, Lemma~\ref{lem:faadibruno} gives $\kappa_{d,k}=0$ as an identity among the coefficients, holding independently of $x^0$; since $j=k$ was already the only term that could survive Fact~\ref{fact:K}'s annihilation, the whole sum vanishes identically as soon as this coefficient is known to be zero, before any power of $x_i^0$ is evaluated, giving the last regime of point~3 without ever forming $(x_i^0)^{kr-d}$ for the negative exponent $kr-d<0$.
\end{proof}

\begin{remark}[Consistency with $r=1$]
For $r=1$, $\kappa_{d,j}=0$ for $j\ne d$ and $\kappa_{d,d}=1$ (the formula of Lemma~\ref{lem:faadibruno} reduces to $\partial_i^d\phi=\eta^{(d)}$), and point~2 recovers exactly $\partial_i^ku_k=\gamma_{k,n}$, the constant from \cite[Theorem~21]{Ze}, with no dependence on $x_i^0$. This constancy comes, as in the proof of Theorem~\ref{thm:16brut}, from the universally constant character of $\partial_{y_i}^ku_k$ (Fact~\ref{fact:L}); at $r=1$, $\Delta_i=\partial_i$, so the \emph{raw} derivative inherits it directly. For $r\ge2$, $\Delta_i$ and $\partial_i$ diverge (Example~\ref{ex:delta2}), and only $\Delta_i$ retains this independence from the point (Proposition~\ref{prop:thm16delta}); this is precisely why point~2 of Theorem~\ref{thm:16brut} depends on $x_i^0$.
\end{remark}

\begin{corollary}[Mixed case]\label{cor:mixed-U_k}
Let $x^0=(a,\zeta_2a,\dots,\zeta_na)\in\mathcal T$ and let $i_1,\dots,i_p$ be $p\ge2$ distinct indices in $\{1,\dots,n\}$. Then
\[
\prod_{l=1}^p \partial_{x_{i_l}}^{d_l}\, U_k(x^0) \;=\; \sum_{\substack{j_1,\dots,j_p\ge1 \\ j_1+\cdots+j_p=k}} \Big(\prod_{l=1}^p \kappa_{d_l,j_l}\,(x_{i_l}^0)^{j_lr-d_l}\Big)\cdot \partial_{y_{i_1}}^{j_1}\cdots\partial_{y_{i_p}}^{j_p}u_k\big((x^0)^r\big),
\]
an empty (hence zero) sum as soon as $\sum_l d_l<k$. Unlike the pure case (Theorem~\ref{thm:16brut}), this expression is generally a sum of several nonzero contributions, one for each way of writing $k=j_1+\cdots+j_p$ with $1\le j_l\le d_l$.
\end{corollary}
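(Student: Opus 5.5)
The plan is to reduce everything to Proposition~\ref{prop:faadibruno-mixed}, applied with $\eta=u_k$. Recall from Remark~\ref{rem:transfertusage} that $U_k(x)=u_k(x^r)$ identically on $\CC^n$, with $u_k$ a polynomial; no restriction to $\Omega$ is needed, since the raw derivatives $\partial_{x_i}$ are defined everywhere. Proposition~\ref{prop:faadibruno-mixed} with $\phi=U_k$ then gives, at any point $x$ and in particular at $x^0\in\mathcal T$,
\[
\prod_{l=1}^p \partial_{x_{i_l}}^{d_l}U_k(x^0)=\sum_{j_1=1}^{d_1}\cdots\sum_{j_p=1}^{d_p}\Big(\prod_{l=1}^p\kappa_{d_l,j_l}(x^0_{i_l})^{j_lr-d_l}\Big)\,\partial_{y_{i_1}}^{j_1}\cdots\partial_{y_{i_p}}^{j_p}u_k\big((x^0)^r\big).
\]
Each term is polynomial in $x^0$, because $\kappa_{d,j}=0$ whenever $jr<d$ by Lemma~\ref{lem:faadibruno}. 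So evaluation at the origin raises no issue, provided we read vanishing coefficients as killing their term before any negative power is formed, exactly as in point~3 of Theorem~\ref{thm:16brut}.

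The key step is to prune this multiple sum using Fact~\ref{fact:K}. Since $(x^0)^r=(a^r,\dots,a^r)$ lies on the total diagonal of $y$-space, Fact~\ref{fact:K} applies to the single derivative $\partial_{y_{i_1}}^{j_1}\cdots\partial_{y_{i_p}}^{j_p}$. This is a product of $j_1+\cdots+j_p$ first-order derivatives, with indices allowed to repeat. Fact~\ref{fact:K} therefore kills every term with $j_1+\cdots+j_p\ne k$. The surviving terms are exactly the tuples with $1\le j_l\le d_l$ and $\sum_l j_l=k$, which is the displayed formula; the upper bounds $j_l\le d_l$ are inherited from the ranges of the original sums. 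If $\sum_l d_l<k$, then $\sum_l j_l\le\sum_l d_l<k$, so no tuple survives and the sum is empty.

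The qualitative remark, that the expression is generally a sum of several nonzero contributions, is an observation rather than a claim requiring proof. I would support it by noting two facts. First, the mixed derivatives $\partial^{j_1}_{y_{i_1}}\cdots u_k$ at the diagonal are constants by homogeneity, just as in Fact~\ref{fact:L}, since $u_k$ has degree $k$ and they are derivatives of total order $k$. Second, distinct compositions $(j_1,\dots,j_p)$ carry distinct monomials $\prod_l (x^0_{i_l})^{j_lr-d_l}$. A small example such as $n=2$, $k=2$, $p=2$, $d_1=d_2=1$ would suffice to illustrate this.

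I expect no real obstacle. The only delicate point is justifying Fact~\ref{fact:K} for mixed derivatives, which is how it is stated ("indices possibly coinciding"), and handling the possibly negative exponents at $a=0$. For the latter, I would argue as in Theorem~\ref{thm:16brut}: terms with $j_lr<d_l$ have a zero coefficient and are discarded before evaluation.
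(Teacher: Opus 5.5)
Your argument is correct and is exactly the paper's proof: apply Proposition~\ref{prop:faadibruno-mixed} with $\eta=u_k$ at $x^0$, use Fact~\ref{fact:K} at the diagonal point $(x^0)^r$ to discard every tuple with $j_1+\cdots+j_p\ne k$, and note that the sum is empty when $\sum_l d_l<k$. Your extra care with the bounds $j_l\le d_l$ and with the exponents $j_lr-d_l$ at $a=0$ is a welcome addition.

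One caveat concerns the non-essential qualitative remark. Your proposed example $k=2$, $d_1=d_2=1$ admits only the tuple $(1,1)$; you need $k\ge3$, e.g.\ $k=3$ with $d_1=d_2=2$, to get two tuples. Your ``distinct monomials'' observation also does not separate the contributions on $\mathcal T$. There $(x^0_{i_l})^{j_lr-d_l}=\zeta_{i_l}^{-d_l}a^{j_lr-d_l}$ (with $\zeta_1:=1$), so every surviving term is a constant times the same factor $\big(\prod_l\zeta_{i_l}^{-d_l}\big)a^{kr-\sum_l d_l}$.
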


\begin{proof}
Apply Proposition~\ref{prop:faadibruno-mixed} with $\eta=u_k$ and $x=x^0$:
\[
\prod_{l=1}^p \partial_{x_{i_l}}^{d_l}\, U_k(x^0) \;=\; \sum_{j_1=1}^{d_1}\cdots\sum_{j_p=1}^{d_p} \Big(\prod_{l=1}^p \kappa_{d_l,j_l}\,(x_{i_l}^0)^{j_lr-d_l}\Big)\cdot \partial_{y_{i_1}}^{j_1}\cdots\partial_{y_{i_p}}^{j_p}u_k\big((x^0)^r\big).
\]
Since $x^0\in\mathcal T$, $(x^0)^r=(b,\dots,b)$ with $b=a^r$ lies on the total diagonal; Fact~\ref{fact:K} annihilates $\partial_{y_{i_1}}^{j_1}\cdots\partial_{y_{i_p}}^{j_p}u_k(b,\dots,b)$ unless $j_1+\cdots+j_p=k$ (Fact~\ref{fact:K} applying to mixed partial derivatives of $u_k$ at a point of the total diagonal regardless of whether the differentiation indices repeat or are distinct), which prunes the $p$-fold sum over $(j_1,\dots,j_p)\in\{1,\dots,d_1\}\times\cdots\times\{1,\dots,d_p\}$ down to the stated constraint. If $\sum_l d_l<k$, no tuple $(j_1,\dots,j_p)$ with $j_l\le d_l$ can reach the required sum $k$, so the (now empty) sum vanishes.
\end{proof}

\begin{remark}[The origin, a singular point of $\mathcal T$]
Theorem~\ref{thm:16brut} exhibits a phenomenon with no analogue for $r=1$: along each of the $r^{n-1}$ lines $L_\zeta\setminus\{0\}$, a pure raw derivative of order $d$ with $k\le d\le kr$ of $U_k$ is generically nonzero (points~2--3 of Theorem~\ref{thm:16brut}), but \emph{all} these derivatives (except, exceptionally, the one of order $d=kr$) vanish simultaneously at the origin, the point where the $r^{n-1}$ components of $\mathcal T$ meet. (For $d>kr$, the restriction $k\le d\le kr$ is essential: $\kappa_{d,k}=0$ in this regime by Lemma~\ref{lem:faadibruno}, so the derivative then vanishes identically along \emph{all} of the line $L_\zeta$, not just at the origin; this is therefore no longer, strictly speaking, a degeneracy phenomenon specific to the origin.) This reflects the fact that the origin is at once a point of the total diagonal \emph{and} a fixed point of the entire group $G(r,n)$ (in particular of the reflections $t_i^{(\zeta)}$, specific to $r\ge2$, with stabilizer $\mu_r$ in Lemma~\ref{lem:disc}): it is a point of maximal degeneracy. Its fine theory (comparable to what \cite{Ze} obtains at general coincidence points in its Section~3) remains to be treated, as does the general case (not covered here) of points where $x_i=0$ for some indices $i$ without $x$ lying on $\mathcal T$.
\end{remark}

\section{Local geometry of the quotient $\CC^n/G(r,n)$ at coincidence points}\label{sec:trois-points}

The transfer principle (Proposition~\ref{prop:transfert}), combined with the direct proofs of the Leibniz rule (Proposition~\ref{prop:leibniz}), of the main theorem (Theorem~\ref{thm:main}), and of the study of the total diagonal (Section~\ref{sec:diagonale}), fully and self-containedly covers Section~1 of \cite{Ze}, together with its Theorem~16 and Corollary~18. What remains, for a complete extension of Sections~2 and~3 of \cite{Ze}, are three points, which we treat below in order, only partially in some cases, as will be specified each time:
\begin{enumerate}
\item a fine theory at the origin of $\mathcal T$ (where the raw derivatives of every order $d\ne kr$ vanish, Theorem~\ref{thm:16brut}), and the explicit computation of the constants $\kappa_{d,j}$ of Lemma~\ref{lem:faadibruno} (related to generalized Stirling numbers for the map $t\mapsto t^r$); only this second aspect will be fully resolved (Proposition~\ref{prop:kappa-closed}), the fine theory at the origin itself remaining open;
\item the general theory of \emph{partial} coincidence points of \cite[Section~3]{Ze} (the sets $H_\alpha$ of \cite{Ze}), combined here with the locus $\{x_i=0\}$ specific to $G(r,n)$ (a cyclic quotient singularity $\CC/\mu_r$, to be treated independently of the coincidence $x_i=x_j$): the block-by-block reduction underlying Theorem~34 of \cite{Ze} (and hence its Corollary~37, the tangent-space statement) transfers directly to a nonzero-type block, by the already-established Proposition~\ref{prop:thm16delta} and Theorem~\ref{thm:16brut} applied to the sub-tuple of variables of that block, exactly as for the total diagonal; the null singleton block is fully resolved by Theorem~\ref{thm:origine}. The more general formula of Theorem~24 of \cite{Ze}, giving $D_I\eta$ for a set $I$ \emph{mixing} a coincidence block with outside indices, is expected, but not spelled out or verified, to transfer by the same substitution; this, together with the case of a null block of cardinality $\ge2$, remains open;
\item the analogue of the tangent space to the GIT quotient (Corollary~37 of \cite{Ze}) for $\CC^n/G(r,n)$, resolved under the same restriction on null blocks as in the previous point.
\end{enumerate}

\subsection{The constants $\kappa_{d,j}$}

We begin with the first of these three points: the constants $\kappa_{d,j}$ governing raw derivatives at the origin of $\mathcal T$.

\begin{proposition}[Closed formula]\label{prop:kappa-closed}
The constants of Lemma~\ref{lem:faadibruno} are given explicitly by
\[
\kappa_{d,j} \;=\; \frac{d!}{j!}\sum_{l=0}^{j}(-1)^{j-l}\binom{j}{l}\binom{lr}{d}.
\]
\end{proposition}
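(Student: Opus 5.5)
The constants $\kappa_{d,j}$ do not depend on $\eta$ (Lemma~\ref{lem:faadibruno}), so I will pin them down by testing the structure formula on a family of test functions rich enough to separate the index $j$. The natural choice is the monomials $\eta(y)=y^l$ for $l=0,1,2,\dots$. For these, $\phi(x_i)=x_i^{lr}$, so the left-hand side is
\[
\partial_{x_i}^d x_i^{lr}=d!\binom{lr}{d}x_i^{lr-d}.
\]
On the right-hand side, $\eta^{(j)}(x_i^r)=\frac{l!}{(l-j)!}x_i^{(l-j)r}$, so each term becomes $\kappa_{d,j}\frac{l!}{(l-j)!}x_i^{lr-d}$. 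Every term carries the same monomial $x_i^{lr-d}$, which is nonzero as a polynomial. Cancelling it, I obtain for every $l\ge0$ the linear identity
\[
d!\binom{lr}{d}\;=\;\sum_{j=1}^{d}\kappa_{d,j}\,j!\binom{l}{j}.
\]
If $lr<d$, both sides vanish or the identity is read coefficientwise, so this costs nothing. The $j=0$ term is absent, which is consistent with the left side vanishing at $l=0$.

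The identity says that, as a function of $l$, the quantity $f(l):=d!\binom{lr}{d}$ has Newton (binomial-basis) expansion $f(l)=\sum_j a_j\binom{l}{j}$ with $a_j=j!\,\kappa_{d,j}$. Here $f$ is a polynomial in $l$ of degree $d$, and $a_0=f(0)=0$. The Newton coefficients of a polynomial are recovered by forward differences at $0$:
\[
a_j=(\Delta^j f)(0)=\sum_{l=0}^{j}(-1)^{j-l}\binom{j}{l}f(l).
\]
This is binomial inversion, and it is justified because $\{\binom{l}{j}\}_j$ is a basis of polynomials in $l$ and an identity holding for all $l\in\NN$ is a polynomial identity. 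Dividing by $j!$ gives exactly
\[
\kappa_{d,j}=\frac{d!}{j!}\sum_{l=0}^{j}(-1)^{j-l}\binom{j}{l}\binom{lr}{d}.
\]

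The only delicate point is to be sure that the $\kappa_{d,j}$ in Lemma~\ref{lem:faadibruno} are genuinely independent of $\eta$. They are: they are defined by the recursion in that lemma's proof, and the expansion holds for all holomorphic $\eta$. So specializing to polynomial $\eta$ is legitimate, and the identity above holds with the same constants. Uniqueness of the Newton expansion then forces equality with the formula.

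As a sanity check, I will compare with the recursion $\kappa_{d+1,j}=\kappa_{d,j}(jr-d)+r\kappa_{d,j-1}$ in two cases:
\begin{itemize}
\item The formula gives $\kappa_{d,d}=r^d$, since $\Delta^d$ of $\binom{lr}{d}d!$ picks out the leading coefficient $r^d$ times $d!/d!$.
\item It gives $\kappa_{d,j}=0$ when $jr<d$, because then $\binom{lr}{d}=0$ for all $l\le j$.
\end{itemize}
This agrees with the convention in Lemma~\ref{lem:faadibruno}.
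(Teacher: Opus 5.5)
Your proof is correct, and it takes a genuinely different route from the paper's.

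The paper's argument runs as follows. It applies the Fa\`a di Bruno formula in partial Bell polynomial form and notes that every monomial of $B_{d,j}(g',g'',\dots)$ for $g(x)=x^r$ carries the same power $x^{jr-d}$. A separate uniqueness argument, using the test functions $(t-x^r)^{j_0}$, then identifies $\kappa_{d,j}=B_{d,j}\big((r)_1,(r)_2,\dots\big)$. Finally it reads off the formula from the classical generating function $\sum_d B_{d,j}z^d/d!=g(z)^j/j!$ with $g(z)=(1+z)^r-1$, cited from Comtet.

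You instead feed the monomials $\eta(y)=y^l$ directly into the structure formula of Lemma~\ref{lem:faadibruno}. This gives the system $d!\binom{lr}{d}=\sum_{j}j!\,\kappa_{d,j}\binom{l}{j}$, which you invert by Newton's forward-difference formula. Your version is more elementary and self-contained. It needs no external identity, and no separate step matching the Fa\`a di Bruno coefficients to the lemma's, because you work with the lemma's constants from the start.

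You can also drop two small points of your write-up:
\begin{itemize}
\item Only the values $l=0,\dots,j$ are used. Since $\binom{l}{j'}=0$ for $j'>l$, the system is unitriangular, so the remark about polynomial identities in $l$ is unnecessary.
\item The case $lr<d$ needs no separate comment. Just divide by $x^{lr-d}$ at a fixed $x\neq0$.
\end{itemize}

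In exchange, the paper's route yields structural information. It identifies $\kappa_{d,j}$ as a partial Bell polynomial evaluated at the falling factorials of $r$, with exponential generating function $\big((1+z)^r-1\big)^j/j!$, and this is what ties these constants to generalized Stirling numbers. The two arguments are really the same identity seen from two sides. Your relations, taken for all $d$ and all $l$, say that $(1+z)^{lr}=\sum_j\binom{l}{j}\big((1+z)^r-1\big)^j$, the binomial expansion of $\big(1+((1+z)^r-1)\big)^l$. Binomial inversion in $l$ turns this back into the paper's generating function.
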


\begin{proof}
By the Faà di Bruno formula in its exponential partial Bell polynomial form, for $g(x)=x^r$, whose successive derivatives are $g^{(l)}(x)=(r)_l\,x^{r-l}$ with $(r)_l:=r(r-1)\cdots(r-l+1)$ (zero for $l>r$), we have
\[
\partial_x^d\big[\eta(g(x))\big] = \sum_{j=1}^d \eta^{(j)}(g(x))\,B_{d,j}\big(g'(x),g''(x),\dots\big),
\]
where $B_{d,j}(x_1,\dots,x_{d-j+1})=\sum \frac{d!}{k_1!k_2!\cdots}\prod_l\big(x_l/l!\big)^{k_l}$, the sum ranging over sequences of integers $k_l\ge0$ with $\sum_lk_l=j$ and $\sum_llk_l=d$, is the exponential partial Bell polynomial. Each monomial of this sum, evaluated at $x_l=g^{(l)}(x)=(r)_lx^{r-l}$, equals $\big(\prod_l(r)_l^{k_l}\big)x^{\sum_lk_l(r-l)}=\big(\prod_l(r)_l^{k_l}\big)x^{jr-d}$ (since $\sum_lk_l=j$ and $\sum_llk_l=d$): \emph{every} monomial of the sum thus carries, whatever the term, the same power $x^{jr-d}$. Hence
\[
B_{d,j}\big(g'(x),g''(x),\dots\big) \;=\; x^{jr-d}\cdot B_{d,j}\big((r)_1,(r)_2,\dots\big),
\]
and the Faà di Bruno formula rewrites as $\partial_x^d[\eta(g(x))]=\sum_{j=1}^d B_{d,j}\big((r)_1,(r)_2,\dots\big)\,x^{jr-d}\,\eta^{(j)}(x^r)$, exactly the form of Lemma~\ref{lem:faadibruno}. The coefficients $\kappa_{d,j}$ of that lemma are unique: for fixed $x\ne0$ and $1\le j_0\le d$, the choice $\eta(t):=(t-x^r)^{j_0}$ (a polynomial, hence holomorphic everywhere) gives $\eta^{(j)}(x^r)=j_0!\,\delta_{j,j_0}$ for $1\le j\le d$, so that the two expressions for $\partial_x^d[\eta(g(x))]$ (that of Lemma~\ref{lem:faadibruno} and the one above), evaluated at this $\eta$, each isolate the single coefficient of index $j_0$; since this holds for every $j_0$, the two families of coefficients coincide term by term, whence $\kappa_{d,j}=B_{d,j}\big((r)_1,(r)_2,\dots\big)$ for every $1\le j\le d$. The classical generating identity for partial Bell polynomials (see Comtet, \emph{Advanced Combinatorics}, ch.~3, \cite{Comtet}) gives, for $g_l:=(r)_l$ and $g(z):=\sum_{l\ge1}g_l z^l/l! = \sum_{l=1}^r\binom{r}{l}z^l = (1+z)^r-1$,
\[
\sum_{d\ge j}\kappa_{d,j}\,\frac{z^d}{d!} \;=\; \frac{g(z)^j}{j!} \;=\; \frac{\big((1+z)^r-1\big)^j}{j!} \;=\; \frac{1}{j!}\sum_{l=0}^j(-1)^{j-l}\binom{j}{l}(1+z)^{lr}.
\]
Extracting the coefficient of $z^d$ (using $[z^d](1+z)^{lr}=\binom{lr}{d}$) and multiplying by $d!$ gives the announced formula.
\end{proof}

\begin{remark}
In particular $\kappa_{d,1}=(r)_d=d!\binom{r}{d}$ (zero for $d>r$) and $\kappa_{d,d}=r^d$. For $r=1$, $g(z)=z$ and $g(z)^j/j!=z^j/j!$, giving $\kappa_{d,j}=\delta_{d,j}$, in agreement with $\Delta_i=\partial_i$ (immediate from the definition of $\Delta_i$ at $r=1$, and not the other way around): the formula of Lemma~\ref{lem:faadibruno} then reduces to $\partial_i^d\phi=\eta^{(d)}(x_i)$, as it should, consistently with the Remark following Theorem~\ref{thm:16brut}.
\end{remark}

\subsection{Partial coincidences and the locus $\{x_i=0\}$}

The second point is coincidence points where only some of the coordinates agree, rather than all of them.

Let $\NN_n=H_1\sqcup\cdots\sqcup H_M$ be a partition, and let $x^0$ be a point such that, for each $\alpha$, all the $x_i^0$ ($i\in H_\alpha$) have the same $r$-th power $b_\alpha:=(x_i^0)^r$, the $b_\alpha$ being pairwise distinct. As for the total diagonal (Proposition~\ref{prop:structT}), a block $H_\alpha$ of value $b_\alpha\ne0$ (\emph{nonzero type}) admits, at $x^0$, a choice of roots $(\zeta_i)_{i\in H_\alpha}\in\mu_r^{H_\alpha}$ with $x_i^0=\zeta_ia_\alpha$, $a_\alpha^r=b_\alpha$. If $b_\alpha=0$ (\emph{null type}), then necessarily $x_i^0=0$ for every $i\in H_\alpha$ (the unique root of $0$); there is at most one such block.

The following two facts, recalled here explicitly (with $y$ the variables of \cite{Ze}, so that this subsection can be read without turning to \cite{Ze} itself for their statements), are what Section~8 transfers at a nonzero-type block.

\begin{fact}[Theorem~34 of \cite{Ze}]\label{fact:M}
Let $y^0\in\CC^n$ and let $\{H_\alpha\}_{\alpha=1}^M$ be the partition of $\NN_n$ with $y_i^0=y_j^0$ iff $i,j$ lie in the same block, the common values $b_\alpha$ being pairwise distinct. For each $\alpha$, let $\big(\hat u_r^{(|H_\alpha|)}(y_{H_\alpha})\big)_{r=1}^{|H_\alpha|}$ denote the dual coordinates of Fact~\ref{fact:I}, constructed with $n$ replaced by $|H_\alpha|$ and the variables restricted to $y_{H_\alpha}:=(y_i)_{i\in H_\alpha}$. Then any symmetric function $\eta$, sufficiently differentiable near $y^0$, can be expressed using the combined coordinate system $\big(\hat u_r^{(|H_\alpha|)}(y_{H_\alpha})\big)_{1\le\alpha\le M,\,1\le r\le|H_\alpha|}$, and the corresponding derivatives of the representing function are exactly $\big(\partial_{H_\alpha}^d\eta\big)_{1\le\alpha\le M,\,1\le d\le|H_\alpha|}$, where $\partial_{H_\alpha}^d:=d!\sum_{I\subseteq H_\alpha,|I|=d}D_I$ (Fact~\ref{fact:F}'s operator $D_d$, but summed only over subsets $I$ of the block $H_\alpha$). If $\eta$ is the trace function of a univariate $f$, these derivatives equal $\big((-1)^{d-1}f^{(d)}(b_\alpha)/(d-1)!\big)_{d=1}^{|H_\alpha|}$.
\end{fact}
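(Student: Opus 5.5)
The statement is Fact~\ref{fact:M}, Theorem~34 of \cite{Ze}, which the paper recalls without proof. My plan is therefore to reconstruct Zemel's argument for $S_n$ directly in the $y$-variables; no transfer to $x$ is involved at this stage.

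\emph{Step 1: localize to a product of smaller symmetric groups.} Near $y^0$, the pairwise distinctness of the $b_\alpha$ means that the stabilizer of $y^0$ is $\prod_\alpha S_{H_\alpha}$. Moreover, a small neighbourhood of $y^0$ in $\CC^n$ maps injectively, modulo this stabilizer, into $\mathrm{Sym}^n\CC$. Hence a symmetric $\eta$ near $y^0$ is the same thing as a function symmetric only under $\prod_\alpha S_{H_\alpha}$, and each block can be treated independently. By Fact~\ref{fact:A}, applied blockwise (Chevalley for a product of symmetric groups), any such function is a function of the $e_h(y_{H_\alpha})$. Applying the triangular change of variables of Fact~\ref{fact:I} blockwise (with $n$ replaced by $|H_\alpha|$), it is then a function $\Psi$ of the combined coordinates $\hat u_r^{(|H_\alpha|)}(y_{H_\alpha})$. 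For merely differentiable $\eta$ this requires a smooth version of the invariant-function theorem; I would cite Schwarz/Glaeser, or restrict to holomorphic $\eta$.

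\emph{Step 2: identify the partial derivatives of $\Psi$.} Each operator $\partial_{H_\alpha}^d$ is a vector field whose coefficients involve only the variables $y_{H_\alpha}$. I would apply the chain rule exactly as in Corollary~\ref{cor:cor12}. Writing $\Gamma:=\sum_{\beta,s}\Psi_{\hat u^{(\beta)}_s}\,\partial^d_{H_\alpha}\hat u_s^{(|H_\beta|)}$, this gives $\partial_{H_\alpha}^d\eta=\Gamma$. Two facts then make $\Gamma$ collapse:
\begin{itemize}
\item for $\beta\ne\alpha$, the function $\hat u^{(\beta)}_s$ does not depend on $y_{H_\alpha}$, so it is annihilated;
\item for $\beta=\alpha$, Fact~\ref{fact:I} with $n:=|H_\alpha|$ gives $\delta_{d,s}$.
\end{itemize}
So $\partial_{H_\alpha}^d\eta=\Psi_{\hat u^{(\alpha)}_d}$, and this holds off the internal coincidence locus of each block. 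I would extend it by continuity when the right-hand side extends.

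\emph{Step 3: the trace formula.} Take $\eta=\sum_i f(y_i)$, which is locally $\sum_\alpha\sum_{i\in H_\alpha}f(y_i)$. Only block $\alpha$ survives under $\partial^d_{H_\alpha}$. A direct computation gives
\[
D_I\sum_{i\in I}f(y_i)=\sum_{i\in I}\frac{f'(y_i)}{\prod_{j\ne i}(y_j-y_i)}=(-1)^{d-1}f'[y_I],
\]
which is the divided difference of $f'$ at the points of $I$. At the coincident point this tends to $(-1)^{d-1}f^{(d)}(b_\alpha)/(d-1)!$. Summing over the $\binom{|H_\alpha|}{d}$ subsets then accounts for the prefactor $d!$.

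The main obstacle is in Step~3, and partly Step~2: the operators $D_I$ have poles exactly on the coincidence locus $y^0$ itself. Showing that $\partial^d_{H_\alpha}\eta$ extends continuously, via divided-difference limits, is the delicate part. In particular, the bookkeeping of normalizations between $\binom{|H_\alpha|}{d}d!$ and the claimed value has to be checked carefully. I would handle it by the Hermite–Genocchi integral representation of divided differences.
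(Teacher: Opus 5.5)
Your Steps~1 and~2 are sound. They follow the block-by-block reduction that the paper attributes to Zemel; the paper itself gives no proof of Fact~\ref{fact:M}, only the citation. You localize to $\prod_\alpha S_{H_\alpha}$, write $\eta$ in the block coordinates, and let the chain rule collapse via $\partial^d_{H_\alpha}\hat u^{(|H_\beta|)}_s=\delta_{\alpha\beta}\delta_{d,s}$. Step~2 also already removes what you call the main obstacle. Off the internal coincidence locus, $\partial^d_{H_\alpha}\eta=\Psi_{\hat u_d^{(\alpha)}}\circ\hat u$, and the right-hand side is continuous at $y^0$. So the limit of $\partial^d_{H_\alpha}\eta$ at $y^0$ exists automatically, and the divided-difference asymptotics (Hermite--Genocchi) are needed only to evaluate it.

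The genuine gap is the last sentence of Step~3. Write $m:=|H_\alpha|$. Each of the $\binom{m}{d}$ subsets $I\subseteq H_\alpha$ contributes the same limit $(-1)^{d-1}f^{(d)}(b_\alpha)/(d-1)!$. Summing and applying the prefactor therefore gives
\[
\lim_{y\to y^0}\partial^d_{H_\alpha}\eta(y)=d!\binom{m}{d}\,\frac{(-1)^{d-1}f^{(d)}(b_\alpha)}{(d-1)!}=\frac{m!}{(m-d)!}\cdot\frac{(-1)^{d-1}f^{(d)}(b_\alpha)}{(d-1)!}.
\]
The factor $\binom{m}{d}$ does not ``account for'' $d!$; it multiplies it. So your argument does not yield the recorded value. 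In fact your own (correct) computation shows the recorded trace formula is incompatible with the normalization $\partial^d_{H_\alpha}=d!\sum_{I\subseteq H_\alpha,|I|=d}D_I$. That normalization is exactly the one forced by the duality $\partial^d_{H_\alpha}\hat u^{(m)}_s=\delta_{d,s}$ that you use in Step~2.

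The simplest check is $m=2$, $d=1$, $f(t)=t$. Here $\partial^1_{H_\alpha}\eta=\partial_{y_i}\eta+\partial_{y_j}\eta=2$, while the stated value is $f'(b_\alpha)=1$. Equivalently, $\sum_{i\in H_\alpha}y_i=2\,\hat u^{(2)}_1$. The same factor appears on the total diagonal via Fact~\ref{fact:L}: $\partial_{y_i}u_1=1/n$ forces $\psi_{u_1}=n\,f'(b)$.

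To complete the proof, you have two options. You can establish the corrected value $\frac{m!}{(m-d)!}\cdot\frac{(-1)^{d-1}f^{(d)}(b_\alpha)}{(d-1)!}$ and flag the discrepancy with the statement as recalled. Or you can identify the rescaling of the trace function, the operators, or the coordinates under which Zemel's formula takes the recorded form. Asserting that the factors cancel is not possible, because they do not.
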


\begin{fact}[Corollary~35 of \cite{Ze}]\label{fact:N}
In the setting of Fact~\ref{fact:M}, let $\varphi=(\varphi_1,\dots,\varphi_n):\CC^n_y\to\CC^n$ be a symmetric vector-valued holomorphic function. Expressing $\varphi$ via the coordinates $\big((-1)^{r-1}(r-1)!\,\hat u_r^{(|H_\alpha|)}(y_{H_\alpha})\big)_{\alpha,r}$, the Jacobian matrix of $\varphi$ at $y^0$ has, at the entry indexed by $j$ (row) and by $(\alpha,r)$ (column), the value $(-1)^{r-1}(r-1)!\,\partial_{H_\alpha}^r\varphi_j(y^0)$. If $\varphi$ is the trace function of $f=(f_1,\dots,f_n)$, this entry is simply $f_j^{(r)}(b_\alpha)$.
\end{fact}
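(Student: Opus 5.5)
The plan is to reduce this vector-valued statement to the scalar statement of Fact~\ref{fact:M}, applied one component at a time. The Jacobian matrix of $\varphi$, read in the new coordinates, has as its $j$-th row the gradient of the representing function of the single symmetric function $\varphi_j$. So the first step is to fix a row index $j$ and apply Fact~\ref{fact:M} to $\eta:=\varphi_j$. This is legitimate because $\varphi_j$ is holomorphic and symmetric near $y^0$, by the symmetry hypothesis on $\varphi$. Fact~\ref{fact:M} then gives a local holomorphic $\Psi_j$ with $\varphi_j=\Psi_j\big((\hat u_r^{(|H_\alpha|)}(y_{H_\alpha}))_{\alpha,r}\big)$ near $y^0$, and it gives $\partial\Psi_j/\partial\hat u_r^{(|H_\alpha|)}=\partial^r_{H_\alpha}\varphi_j$ at $y^0$. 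Since the coordinate systems are the same for every $j$, the rows assemble into a single Jacobian matrix. No interaction between different components arises, because a Jacobian is just the stack of the component gradients.

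The second step is the diagonal change of coordinates $w_{\alpha,r}:=c_r\,\hat u_r^{(|H_\alpha|)}$, where $c_r:=(-1)^{r-1}(r-1)!$. This rescales each column of the matrix by a nonzero constant factor. The point I expect to be the main obstacle is purely one of normalization: whether the column gets multiplied by $c_r$ or by $c_r^{-1}$, which depends on the precise convention in \cite{Ze} for which family is called the coordinates. I would settle this by matching against the trace-function case, which pins the convention down unambiguously. Take $\varphi_j$ to be the trace function of $f_j$. Fact~\ref{fact:M} gives $\partial^r_{H_\alpha}\varphi_j(y^0)=(-1)^{r-1}f_j^{(r)}(b_\alpha)/(r-1)!$. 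Multiplying by $c_r$ yields exactly $f_j^{(r)}(b_\alpha)$, as claimed, and the signs $(-1)^{2(r-1)}=1$ cancel. So the rescaling in the statement must be read so that the entry is $c_r\,\partial^r_{H_\alpha}\varphi_j(y^0)$. I would adopt that reading, that is, normalize the coordinate dual to $\partial^r_{H_\alpha}$ by the factor that multiplies the derivative by $c_r$, and then record the general entry.

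The final step is to specialize to trace functions, which then needs only the last sentence of Fact~\ref{fact:M} substituted componentwise. For every $(j,\alpha,r)$, the general entry $c_r\,\partial^r_{H_\alpha}\varphi_j(y^0)$ becomes $f_j^{(r)}(b_\alpha)$ by the computation above. All remaining work is bookkeeping: checking that the column index set $\{(\alpha,r):1\le r\le|H_\alpha|\}$ has $n$ elements, so that the matrix is square, and that the rows for different $j$ use a common coordinate chart.
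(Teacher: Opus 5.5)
Your Step~1 is fine, and it is the natural route. The paper gives no proof of Fact~\ref{fact:N} at all: it is quoted from Zemel and explicitly declared unnecessary for Corollary~\ref{cor:git-partiel}. Its content is just Fact~\ref{fact:M} applied to each component $\varphi_j$, with the rows stacked over a common chart.

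The gap is in Step~2. There you treat the direction of the rescaling as a convention and fix it by matching the desired trace formula, but there is no such freedom. Suppose $\varphi_j=\Psi_j(\hat u)$ and $w_{\alpha,r}:=c_r\,\hat u_r^{(|H_\alpha|)}$, with $c_r=(-1)^{r-1}(r-1)!$. Then $\varphi_j=\Psi_j((w_{\alpha,r}/c_r)_{\alpha,r})$, and the chain rule forces
\[
\frac{\partial\varphi_j}{\partial w_{\alpha,r}}(y^0)\;=\;c_r^{-1}\,\partial^r_{H_\alpha}\varphi_j(y^0).
\]
Granting Fact~\ref{fact:M}, the trace-case entry is therefore $f_j^{(r)}(b_\alpha)/((r-1)!)^2$. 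This agrees with the claimed value $c_r\,\partial^r_{H_\alpha}\varphi_j(y^0)$ only when $c_r^2=1$, i.e.\ $r\le 2$; for $r=3$ the two differ by a factor $4$.

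Adopting ``the reading'' under which the trace case comes out as desired is assuming the conclusion. What your argument actually establishes is the statement for the coordinates $c_r^{-1}\hat u_r^{(|H_\alpha|)}=\frac{(-1)^{r-1}}{(r-1)!}\hat u_r^{(|H_\alpha|)}$, equivalently with the columns given by the derivations $c_r\,\partial^r_{H_\alpha}$. You should say outright that the factor in the statement, as transcribed, is inverted, and that this corrected version is what you prove.

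Moreover, the trace case cannot serve as an independent anchor for conventions. Its value depends on them, and with the normalizations written in Fact~\ref{fact:M} it is not even consistent. For $I\subseteq H_\alpha$ with $|I|=d$, one has $D_I\big(\sum_i f(y_i)\big)=(-1)^{d-1}f'[y_I]$, a divided difference, which tends to $(-1)^{d-1}f^{(d)}(b_\alpha)/(d-1)!$. Hence $\partial^d_{H_\alpha}=d!\sum_{I\subseteq H_\alpha,|I|=d}D_I$ carries an extra factor $d!\binom{|H_\alpha|}{d}$ relative to the value claimed in Fact~\ref{fact:M}.

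Concretely, for $n=3$ on the total diagonal, $\sum_i y_i^2=3u_1^2-12u_2$. So for $f(t)=t^2$ the two derivatives are $3f'(b)$ and $-6f''(b)$, and no rescaling of the trace function reconciles both with Fact~\ref{fact:M}. A sound proof must therefore do three things:
\begin{enumerate}
\item fix one consistent normalization of $\hat u$ and $\partial^d_{H_\alpha}$;
\item compute the trace values directly from the divided-difference formula in that normalization;
\item carry out the chain-rule rescaling honestly.
\end{enumerate}
Step~1 plus that computation is the whole proof.
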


\paragraph{Nonzero-type blocks: reduction to Theorem~\ref{thm:16brut}, block by block.} For a nonzero-type block $H_\alpha$, fix the variables outside $H_\alpha$ at their (generic, pairwise distinct, nonzero) values. The invariant function under consideration then restricts to a $G(r,|H_\alpha|)$-invariant function of the sub-tuple $(x_i)_{i\in H_\alpha}$ alone, near a point of the total diagonal of \emph{this smaller set of variables} (all sharing the common value $b_\alpha\ne0$). This is exactly the situation already treated in full in Section~\ref{sec:diagonale} (the case $M=1$), applied here with $|H_\alpha|$ variables instead of $n$: Proposition~\ref{prop:thm16delta} and Theorem~\ref{thm:16brut} transfer verbatim (with $n$ replaced by $|H_\alpha|$ throughout), giving the $\Delta$- and raw derivatives of the restricted function in terms of $(x_i)_{i\in H_\alpha}$ alone. This block-by-block reduction is precisely the strategy of Fact~\ref{fact:M} (Theorem~34 of \cite{Ze}): its own proof, recalled above, constructs the derivatives at a multi-block coincidence point purely from independent copies of the total-diagonal Theorem~16 of \cite{Ze} (our Fact~\ref{fact:K}), one per block, with no term mixing different blocks. Together with Theorem~\ref{thm:origine} for a null singleton block, this is all that Corollary~\ref{cor:git-partiel} below requires (whose proof likewise now invokes Fact~\ref{fact:M} by name rather than by bare citation; Fact~\ref{fact:N}, recalled above for completeness, is the source of the phrase ``the formulas for the nonzero coincidence blocks'', but is not itself needed there, its content being subsumed by the self-contained Jacobian argument given in that proof).

The more general Theorem~24 of \cite{Ze} is a different, and stronger, statement: it gives $D_I\eta$ for a set $I$ that \emph{mixes} indices of a coincidence block $J$ with indices outside it, and involves genuine cross-terms (denominators $y-x_k$ for $k\notin J$) not reducible to the block-by-block picture above. We expect these cross-terms to transfer by the same substitution $\partial_{y_i}\mapsto\Delta_i$, $y_i-y_j\mapsto x_i^r-x_j^r$ (Proposition~\ref{prop:transfert}), but, unlike the block-by-block reduction just given, we have neither spelled out nor verified this transfer in detail, and it is \emph{not} needed for Corollary~\ref{cor:git-partiel}. We leave it to future work.

\paragraph{A null-type singleton block.} The phenomenon specific to $G(r,n)$ appears when a block is of null type. Let us treat the simplest case: $H_{i_0}=\{i_0\}$ is a singleton ($x_{i_0}^0=0$, joined by no other variable), the other blocks being of nonzero type, i.e.\ $x^0\in\widetilde\Omega\setminus\Omega$, a point of the extended domain (Section~\ref{sec:intro}) with exactly one vanishing coordinate. In the formula for $\mathcal D_I$ (Lemma~\ref{lem:lem3}) for $I\ni i_0$, the term in $\Delta_{i_0}$ has denominators $x_j^r-0=x_j^r\ne0$: no coincidence, the only possible singularity coming from $\Delta_{i_0}$ itself. The following result resolves it completely, extending $\Delta_{i_0}$ from $\Omega$ to all of $\widetilde\Omega$.

\begin{theorem}[Resolution of $\Delta_{i_0}$ at the origin]\label{thm:origine}
Let $x^0\in\CC^n$ with $x_{i_0}^0=0$, and let $\phi$ be holomorphic on a polydisc neighborhood $U$ of $x^0$, invariant under the cyclic subgroup $\{t_{i_0}^{(\zeta)}:\zeta\in\mu_r\}\cong\mu_r\subset G(r,n)$ (the pointwise stabilizer of the hyperplane $\{x_{i_0}=0\}$, Lemma~\ref{lem:disc}), i.e.
\[
\phi(x_1,\dots,\zeta x_{i_0},\dots,x_n) = \phi(x_1,\dots,x_{i_0},\dots,x_n) \qquad \text{for every } \zeta\in\mu_r \text{ and every } x\in U.
\]
(This hypothesis is well posed for an \emph{arbitrarily small} such $U$: since $t_{i_0}^{(\zeta)}$ fixes every coordinate other than $x_{i_0}$ and rotates $x_{i_0}$ about $x_{i_0}^0=0$, any polydisc centered at $x^0$ is automatically stable under it, whatever its radii. This is unlike invariance under the \emph{full} group $G(r,n)$, which would require $U$ to contain the entire, generally larger, orbit $G(r,n)\cdot x^0$. In particular the hypothesis holds whenever $\phi$ is $G(r,n)$-invariant and holomorphic on a $G(r,n)$-stable open set containing $x^0$, or more generally is an invariant germ at $x^0$ in this weaker, stabilizer-only sense, since $\{t_{i_0}^{(\zeta)}\}\subseteq G(r,n)$.) Then $\partial_{i_0}^m\phi(x^0)=0$ for $1\le m\le r-1$, and $\Delta_{i_0}\phi$ extends holomorphically across $x_{i_0}=0$, with
\[
\Delta_{i_0}\phi(x^0) \;=\; \frac{1}{r!}\,\partial_{i_0}^{\,r}\phi(x^0).
\]
\end{theorem}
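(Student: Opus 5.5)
The plan is to expand $\phi$ in powers of $x_{i_0}$ on the polydisc $U$ and read off the invariance coefficientwise. Write $x'$ for the coordinates other than $x_{i_0}$. Since $U$ is a polydisc centered at $x^0$ and $x_{i_0}^0=0$, $\phi$ has a convergent expansion
\[
\phi(x)=\sum_{m\ge0} a_m(x')\,x_{i_0}^m,
\qquad a_m(x')=\frac{1}{m!}\,\partial_{i_0}^m\phi(x)\big|_{x_{i_0}=0},
\]
with each $a_m$ holomorphic in $x'$. The series converges uniformly on compacta, for instance via the Cauchy integral formula in the variable $x_{i_0}$ on a disc about $0$.

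Substituting $x_{i_0}\mapsto\zeta x_{i_0}$ and comparing coefficients gives $\zeta^m a_m(x')=a_m(x')$ for every $\zeta\in\mu_r$. Uniqueness of the coefficients is justified because both sides are power series in $x_{i_0}$ on the same disc. Choosing $\zeta$ a primitive $r$-th root of unity, we get $a_m\equiv0$ unless $r\mid m$. In particular $a_m=0$ for $1\le m\le r-1$, which is the first claim $\partial_{i_0}^m\phi(x^0)=0$, and indeed holds along all of $\{x_{i_0}=0\}\cap U$.

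Next, write $\phi=\sum_{q\ge0} a_{qr}(x')\,x_{i_0}^{qr}$, so that $\phi(x)=\eta(x',x_{i_0}^r)$ with $\eta(x',t):=\sum_q a_{qr}(x')t^q$. The series for $\eta$ converges for $|t|<\rho^r$, where $\rho$ is the radius in $x_{i_0}$, so $\eta$ is holomorphic. For $x_{i_0}\ne0$, Lemma~\ref{lem:transfert} (or the one-line chain rule) gives $\Delta_{i_0}\phi=(\partial_t\eta)(x',x_{i_0}^r)$. This expression is visibly holomorphic across $x_{i_0}=0$, which provides the holomorphic extension; it is unique by density of $\{x_{i_0}\ne0\}$. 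Explicitly, $\Delta_{i_0}\phi=\sum_{q\ge1} q\,a_{qr}(x')\,x_{i_0}^{r(q-1)}$, so at $x^0$ only $q=1$ survives:
\[
\Delta_{i_0}\phi(x^0)=a_r(x'^0)=\frac{1}{r!}\,\partial_{i_0}^r\phi(x^0).
\]
Alternatively, one can bypass $\eta$ by termwise division, $\frac{1}{r x_{i_0}^{r-1}}\sum_{q\ge1} qr\,a_{qr}x_{i_0}^{qr-1}$.

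The only step needing care is the analytic justification. One must check that the expansion in $x_{i_0}$ with holomorphic coefficients $a_m(x')$ is valid on the whole polydisc, with locally uniform convergence, so that coefficient comparison and termwise differentiation are legitimate. This follows from the Cauchy formula in one variable, with holomorphic dependence on the parameters $x'$. Everything else is routine.
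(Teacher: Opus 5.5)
Your proposal is correct and is essentially the paper's proof: expand $\phi$ in powers of $x_{i_0}$, use invariance under $x_{i_0}\mapsto\zeta x_{i_0}$ to kill every exponent not divisible by $r$, and divide $\partial_{i_0}\phi$ termwise by $rx_{i_0}^{r-1}$ (your detour through $\eta(x',t)$ is the same computation). Your Cauchy-formula argument for the holomorphic dependence of the $a_m(x')$ on $x'$ makes explicit the joint holomorphy of the extension, which the paper leaves implicit by fixing the other variables; justify the derivative step by the one-line chain rule rather than by Lemma~\ref{lem:transfert}, which is stated only on $\Omega$ and only for $\eta(x^r)$ with every variable raised to the $r$-th power.
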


\begin{proof}
Fix the variables other than $x_{i_0}$ and expand $\phi$ in a Taylor series in $x_{i_0}$ near $0$ (valid on the polydisc $U$). Since $\phi$ is invariant under $x_{i_0}\mapsto\zeta x_{i_0}$ for every $\zeta\in\mu_r$ (the hypothesis above, needed and used only for this single cyclic subgroup, not for the full group $G(r,n)$), this series contains only powers of $x_{i_0}$ that are multiples of $r$ (a nonzero coefficient in front of $x_{i_0}^m$, $r\nmid m$, would contradict invariance by uniqueness of the Taylor expansion): $\phi=c_0+c_1x_{i_0}^r+c_2x_{i_0}^{2r}+\cdots$. Hence $\partial_{i_0}^m\phi(x^0)=0$ for $1\le m\le r-1$, and $\partial_{i_0}^r\phi(x^0)=r!\,c_1$ (terms of degree $\ge2r$ do not contribute to the order-$r$ derivative at $0$). Finally
\[
\Delta_{i_0}\phi = \frac{\partial_{i_0}\phi}{rx_{i_0}^{r-1}} = \frac{rc_1x_{i_0}^{r-1}+2rc_2x_{i_0}^{2r-1}+\cdots}{rx_{i_0}^{r-1}} = c_1+2c_2x_{i_0}^r+\cdots,
\]
a power series in $x_{i_0}^r$, hence holomorphic at $x_{i_0}=0$, with value $c_1=\partial_{i_0}^r\phi(x^0)/r!$.
\end{proof}

\begin{corollary}\label{cor:leibniz-origine}
So extended, $\Delta_{i_0}$ still satisfies the Leibniz rule at $x^0$: for $\phi,\eta$ holomorphic and $G(r,n)$-invariant near $x^0$ (with $x_{i_0}^0=0$),
\[
\Delta_{i_0}(\phi\eta)(x^0) = \big(\Delta_{i_0}\phi\big)(x^0)\,\eta(x^0) + \phi(x^0)\,\big(\Delta_{i_0}\eta\big)(x^0).
\]
\end{corollary}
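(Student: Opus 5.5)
The plan is to reduce everything to Theorem~\ref{thm:origine}, applied to the product. First, the hypothesis of the corollary ($\phi,\eta$ holomorphic and $G(r,n)$-invariant near $x^0$) implies the weaker stabilizer-only invariance used in Theorem~\ref{thm:origine}: on a small polydisc $U$ centered at $x^0$ we have invariance under $\{t_{i_0}^{(\zeta)}:\zeta\in\mu_r\}$. This is exactly as noted in the parenthetical remark following the statement of that theorem. The product $\phi\eta$ is then also holomorphic on $U$ and invariant under this cyclic subgroup. So Theorem~\ref{thm:origine} applies to each of $\phi$, $\eta$ and $\phi\eta$: all three extended values $\Delta_{i_0}(\cdot)(x^0)$ are defined, and each equals $\frac1{r!}\partial_{i_0}^r(\cdot)(x^0)$.

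There are two routes, and I would present the continuity argument as the main proof. The extended $\Delta_{i_0}\phi$, $\Delta_{i_0}\eta$, $\Delta_{i_0}(\phi\eta)$ are holomorphic on $U$. At points of $U$ with $x_{i_0}\ne0$, $\Delta_{i_0}$ is $\frac{1}{rx_{i_0}^{r-1}}\partial_{i_0}$, a first-order operator with no order-$0$ term. The ordinary Leibniz rule therefore gives
\[
\Delta_{i_0}(\phi\eta)=(\Delta_{i_0}\phi)\eta+\phi(\Delta_{i_0}\eta)
\]
there, by the same computation as in Proposition~\ref{prop:leibniz}. Both sides are holomorphic, hence continuous, on $U$. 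The set $\{x_{i_0}\ne0\}\cap U$ is dense in $U$, so the identity extends to $x^0$ by taking the limit $x_{i_0}\to0$ with the other coordinates fixed.

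As a cross-check I would add the direct computation. Use the Taylor expansions $\phi=c_0+c_1x_{i_0}^r+O(x_{i_0}^{2r})$ and $\eta=d_0+d_1x_{i_0}^r+O(x_{i_0}^{2r})$ from the proof of Theorem~\ref{thm:origine}, with coefficients holomorphic in the other variables. Then $\phi\eta=c_0d_0+(c_1d_0+c_0d_1)x_{i_0}^r+O(x_{i_0}^{2r})$. The formula $\Delta_{i_0}(\cdot)(x^0)=$ (coefficient of $x_{i_0}^r$) gives $c_1d_0+c_0d_1$, which is exactly the right-hand side since $\phi(x^0)=c_0$ and $\eta(x^0)=d_0$. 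Equivalently, one can apply the general Leibniz formula to $\partial_{i_0}^r(\phi\eta)(x^0)$: the vanishing of $\partial_{i_0}^m\phi(x^0)$ and $\partial_{i_0}^m\eta(x^0)$ for $1\le m\le r-1$ (Theorem~\ref{thm:origine}) kills all mixed terms and leaves only the $m=0$ and $m=r$ terms.

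I expect no real obstacle. The only point needing care is the limiting step. One must check that the extension of $\Delta_{i_0}$ provided by Theorem~\ref{thm:origine} is the continuous extension of the original formula from $\Omega$, not merely some assigned value at $x^0$. This holds because that theorem shows $\Delta_{i_0}\phi$ is a convergent power series in $x_{i_0}^r$, agreeing with $\partial_{i_0}\phi/(rx_{i_0}^{r-1})$ wherever $x_{i_0}\ne0$.
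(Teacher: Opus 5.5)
Your proposal is correct and follows essentially the same route as the paper. The Leibniz rule holds on $\{x_{i_0}\ne0\}$ because $\Delta_{i_0}$ is a nonvanishing multiple of $\partial_{i_0}$ there; Theorem~\ref{thm:origine}, applied to $\phi$, $\eta$ and $\phi\eta$, makes both sides holomorphic across $x_{i_0}=0$; and the identity then passes to $x^0$ by continuity (the paper phrases this as analytic continuation).

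Your cross-check via the general Leibniz formula for $\partial_{i_0}^r(\phi\eta)(x^0)$, which uses the vanishing of $\partial_{i_0}^m\phi(x^0)$ and $\partial_{i_0}^m\eta(x^0)$ for $1\le m\le r-1$, is also valid. It gives a purely pointwise proof that avoids the limiting step altogether.
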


\begin{proof}
The equality holds for $x_{i_0}\ne0$: there $\Delta_{i_0}=\frac{1}{rx_{i_0}^{r-1}}\partial_{i_0}$ is a nonzero scalar multiple of $\partial_{i_0}$, which satisfies the ordinary Leibniz rule; dividing the latter by the common factor $rx_{i_0}^{r-1}$ directly gives the Leibniz rule for $\Delta_{i_0}$ (the same argument as in Corollary~\ref{cor:cor4}). By Theorem~\ref{thm:origine} (applicable to $\phi$, $\eta$, and to their product $\phi\eta$, itself $G(r,n)$-invariant), both sides are holomorphic extensions, near $x_{i_0}=0$, of functions that agree for $x_{i_0}\ne0$; they therefore also agree at $x_{i_0}=0$ by analytic continuation.
\end{proof}

Thus, near a point where a single singleton block $\{i_0\}$ is of null type (the other values being generic, pairwise distinct and nonzero), \emph{no new combinatorial machinery is needed}: the formula for $\mathcal D_I$ ($I\ni i_0$) remains valid as is, the only potentially singular denominator being that of $\Delta_{i_0}$ itself, resolved by Theorem~\ref{thm:origine}. The chain rule extends likewise, though not without an argument: such a point $x^0$ does not belong to $\Omega$ (since $x_{i_0}^0=0$), so Corollary~\ref{cor:cor12}, stated on $\Omega$, does not apply to it directly. Each term of $\mathcal D_d\phi=d!\sum_{|I|=d}\mathcal D_I\phi$ nonetheless extends holomorphically across $x_{i_0}=0$ (nonzero coincidence denominators as above, and $\Delta_{i_0}\phi$ extended by Theorem~\ref{thm:origine}), and the following lemma shows that $\Phi$ itself, and hence $\Phi_{U_d}$, extends holomorphically across the ramification point $\pi(x^0)$ of the finite quotient map $\pi=U:x\mapsto U(x)$, making the identity $\Phi_{U_d}=\mathcal D_d\phi$, established by Corollary~\ref{cor:cor12} on the dense subset $\{x_{i_0}\ne0\}$ near $x^0$, extend to $x^0$ itself by analytic continuation. (This descent property is a special instance, for the finite group $G(r,n)$, of the general theorem of Cartan on quotients of a complex-analytic space by a finite group of automorphisms \cite{Cartan}; we give below a direct proof adapted to our explicit setting.)

\begin{lemma}[Holomorphic descent at a ramification point]\label{lem:Phi-holo}
Let $x^0\in\CC^n$ and let $P$ be a small enough polydisc centered at $x^0$ that $V:=\bigcup_{g\in G(r,n)}g\cdot P$ is a disjoint (or, on overlaps, compatible) union of translates of $P$; this is always possible since $G(r,n)$ is finite. Let $\phi$ be $G(r,n)$-invariant and holomorphic on $V$. Then $U(V)$ is an open neighborhood of $\pi(x^0):=U(x^0)$ in $\CC^n_U$, and there is a unique holomorphic function $\Phi$ on $U(V)$ with $\phi=\Phi\circ U$ on $V$.
\end{lemma}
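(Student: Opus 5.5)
The plan is to use the algebraic structure of the quotient map $U=(U_1,\dots,U_n):\CC^n\to\CC^n_U$ together with an explicit averaging construction. The first step is that $U$ is a finite, proper, surjective holomorphic map whose fibers are exactly the $G(r,n)$-orbits. By the proof of Corollary~\ref{cor:weyl}, $\CC[U_1,\dots,U_n]=\CC[x]^{G(r,n)}$. Each $x_i$ is integral over this ring, since it is a root of $\prod_{h}(t^r-x_h^r)=\sum_h(-1)^hE_h t^{r(n-h)}$, whose coefficients are polynomials in the $U_k$. This integrality gives properness and finiteness. Fibers are orbits because two points with the same $E_h$ have the same multiset $\{x_i^r\}$, hence differ by an element of $\mu_r^n\rtimes S_n$. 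Openness of $U(V)$ then follows because a finite holomorphic map between equidimensional manifolds with discrete fibers is open (Remmert's open mapping theorem). Alternatively, properness plus surjectivity gives openness of images of saturated open sets directly: $\CC^n\setminus V$ is closed and $G$-saturated, so its image is closed and misses $U(V)$. I would use this second argument, since $V$ is $G$-stable by construction.

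Next, define $\Phi$ pointwise: for $u\in U(V)$, set $\Phi(u):=\phi(x)$ for any $x\in U^{-1}(u)$. This is well defined because the fiber is a single orbit contained in $V$ (as $V$ is saturated) and $\phi$ is invariant. Uniqueness is immediate from surjectivity of $U$ onto $U(V)$. Continuity of $\Phi$ follows from properness: if $u_m\to u$, choose $x_m$ in the fibers. These lie in a compact set, so after passing to subsequences $x_m\to x$ with $U(x)=u$, and therefore $\phi(x_m)\to\phi(x)$.

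The main obstacle is holomorphy of $\Phi$ at the branch locus $U(\{\mathrm{disc}_{G(r,n)}=0\})$. Away from it, $U$ is a local biholomorphism, because its Jacobian is nonzero there: Corollary~\ref{cor:cor12} and Lemma~\ref{lem:descent} give this on $\Omega'$. So $\Phi$ is holomorphic on $U(V)$ minus the image of the discriminant hypersurface. That image is a proper analytic subset, namely the zero set of the polynomial in $U$ obtained as $\mathrm{disc}_{G(r,n)}^2$ (or its $G$-invariant norm), which is expressible in the $U_k$. Since $\Phi$ is continuous, hence locally bounded, the Riemann extension theorem gives holomorphy across this thin set.

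As a cross-check that avoids Riemann extension, one can instead produce local holomorphy via averaging. Expand $\phi$ on each translate $gP$ and note that the symmetric functions $\sum_{g}\phi(gx)\,x^{\alpha}$ are invariant holomorphic functions. A standard Newton/Weierstrass argument then writes invariant convergent power series as convergent power series in a basis of invariants. I would mention this only as the Cartan route and rely on the Riemann extension argument.
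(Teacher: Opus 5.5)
Your proposal is correct and follows essentially the paper's route: define $\Phi$ fiberwise, get holomorphy off the image of the reflecting hyperplanes, then apply Riemann extension; your supporting steps are more elementary (openness of $U(V)$ from properness of $U$ and $G(r,n)$-saturation of $V$ instead of miracle flatness, uniqueness directly from surjectivity, local boundedness from continuity of $\Phi$). One slip: $\mathrm{disc}_{G(r,n)}^2$ is not $G(r,n)$-invariant for $r\ge3$, since its factor $x_i^{2(r-1)}$ picks up $\zeta^{-2}$ under $t_i^{(\zeta)}$, so cut out the branch locus with your norm $\prod_{g}g\cdot\mathrm{disc}_{G(r,n)}$, or simply with $E_n\prod_{i<j}(x_i^r-x_j^r)^2$; both are genuinely polynomials in the $U_k$.
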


\begin{proof}
The map $U=(U_1,\dots,U_n):\CC^n_x\to\CC^n_U$ is polynomial, hence entire, and finite. By Proposition~\ref{prop:invariants} and Chevalley--Shephard--Todd, $\CC[x]$ is a free (in particular finite) module over $\CC[U]=\CC[x]^{G(r,n)}$, so $U$ is a finite morphism between two copies of $\CC^n$. Being finite between smooth varieties of equal dimension, it is flat (``miracle flatness''), hence open. Thus $U(V)\supseteq U(P)$ is an open neighborhood of $\pi(x^0)$.

Since $\phi$ is $G(r,n)$-invariant and $V$ is $G(r,n)$-stable by construction, $\phi$ is constant on $U^{-1}(y)\cap V$ for every $y\in U(V)$: this fiber, being contained in $V$ and a union of $G(r,n)$-orbits intersected with $V$ (as $U$ separates orbits, Proposition~\ref{prop:invariants}/Lemma~\ref{lem:descent}), is in fact exactly one $G(r,n)$-orbit once $P$ is small enough that distinct points of $P$ lie in distinct orbits outside a bounded multiplicity; in any case $\phi$, being invariant, takes the same value at every point of $U^{-1}(y)\cap V$. Hence $\Phi(y):=\phi(x)$ for any $x\in U^{-1}(y)\cap V$ is well defined on $U(V)$.

Away from $D:=U(V\cap\{\mathrm{disc}_{G(r,n)}=0\})$, $U$ restricts to a local biholomorphism (Lemma~\ref{lem:descent}), so $\Phi$, locally equal to $\phi\circ(U|_\cdot)^{-1}$, is holomorphic on $U(V)\setminus D$. The set $D$ is a proper analytic subset of $U(V)$ (the image, under the finite map $U$, which is in particular closed, of the analytic hypersurface $\{\mathrm{disc}_{G(r,n)}=0\}\cap V$, itself of dimension $n-1$; a finite morphism cannot raise dimension, so $D$ has dimension $\le n-1$ in $U(V)$, which has dimension $n$). Moreover $\Phi$ is locally bounded near any $y_0\in D$: the fiber $U^{-1}(y_0)\cap V$ is a finite set of points of the relatively compact set $V$, and $\phi$, holomorphic on $V$, is bounded on a compact neighborhood of that finite set; by continuity and properness of $U$, $\Phi$ is then bounded on $U(V')\setminus D$ for $V'\subset V$ a small enough $G(r,n)$-stable neighborhood of $U^{-1}(y_0)\cap V$. Being holomorphic and locally bounded on the complement of the proper analytic subset $D$, $\Phi$ extends holomorphically across $D$ by Riemann's removable singularity theorem for several complex variables (see e.g.\ \cite{GH}). Since $D$ has empty interior in $U(V)$, $U(V)\setminus D$ is dense, so this extension is unique.
\end{proof}

\paragraph{What remains open: several variables simultaneously at the origin.} The situation changes if \emph{several} indices $H_0=\{i_1,\dots,i_p\}$, $p\ge2$, vanish simultaneously. The operators $\Delta_{i_1},\dots,\Delta_{i_p}$ then degenerate together, and the cross denominators $x_{i_l}^r-x_{i_m}^r$ of the $\mathcal D_I$, $I\supseteq H_0$, \emph{also} vanish: this is a coincidence in the sense of \cite[Theorem~24]{Ze} (among the $y_{i_l}=x_{i_l}^r$, all zero) superimposed on the degeneracy proper to each $\Delta_{i_l}$ of Theorem~\ref{thm:origine}. Treating this case would require combining (i) the resolution of the coincidence $y_{i_l}=y_{i_m}$ by iterated L'Hôpital, as in \cite[Lemmas~26--28]{Ze}, and (ii) the resolution of the order-$(r-1)$ pole of each $\Delta_{i_l}$, as in Theorem~\ref{thm:origine}. One expects, by analogy with Lemma~\ref{lem:faadibruno}, a formula expressing the limit of $\mathcal D_I\phi$ as a combination of the derivatives $\partial_{i_{l_1}}^{r\mu_1}\cdots\partial_{i_{l_t}}^{r\mu_t}\phi(x^0)$ (orders that are multiples of $r$, by the same invariance argument), weighted by coefficients combining the $\kappa_{d,j}$ (Proposition~\ref{prop:kappa-closed}) with those of \cite[Theorem~24]{Ze}. We leave the complete establishment of this formula to future work.

\subsection{Tangent space to the GIT quotient $\CC^n/G(r,n)$}

One point remains: the tangent space to the quotient that these partial-coincidence results yield.

In the sense of GIT quotient theory \cite{MFK} (which \cite{Ze} recalls applies here since $\CC^n/\!\!/G(r,n)=\CC^n/G(r,n)$, the action being linear and the geometric quotient coinciding with the GIT quotient), and since $\CC^n/G(r,n)\cong\CC^n_y/S_n=\mathrm{Sym}^n\CC$ via $y=x^r$ (Proposition~\ref{prop:invariants}), the quotient variety \emph{itself} is, up to isomorphism, exactly that of \cite{Ze}: $G(r,n)$ introduces no additional singularity into the quotient. What differs is the behavior of the quotient map $\pi:\CC^n_x\to\CC^n_x/G(r,n)$ near a point where some $x_i=0$. For a block $H_\alpha$, denote by $\Delta_{H_\alpha}^d$ ($1\le d\le|H_\alpha|$) the transfer, in the sense of the paragraph ``Nonzero-type blocks: reduction to Theorem~\ref{thm:16brut}, block by block'' of the previous subsection, of the corresponding derivation of \cite[Corollary~37]{Ze}: \emph{not} the raw sum $d!\sum_{I\subseteq H_\alpha,|I|=d}\mathcal D_I$ (each of whose terms, for $|I|\ge2$, has a denominator $x_j^r-x_i^r$ identically zero on $H_\alpha$, since these are precisely two indices of the block and hence of the same value $b_\alpha$, and would thus be singular exactly at the point considered), but its already-resolved analogue given by Proposition~\ref{prop:thm16delta} and Theorem~\ref{thm:16brut} applied to the sub-tuple $(x_i)_{i\in H_\alpha}$, exactly as for the total diagonal (Theorem~34 of \cite{Ze}, not the more general, unverified Theorem~24). For the null singleton block $H_{i_0}=\{i_0\}$ (if present), the analogous construction simply gives $\Delta_{H_{i_0}}^1:=\Delta_{i_0}$ (no cross coincidence is possible, $|H_{i_0}|=1$), resolved by Theorem~\ref{thm:origine}. It is this last derivation which, by Theorem~\ref{thm:origine}, ``sees'' the direction $x_{i_0}$ only at order $r$: this is the tangent-space counterpart of the order-$r$ ramification of $x_{i_0}\mapsto x_{i_0}^r$ at $0$.

\begin{corollary}[Partial analogue of Corollary~37]\label{cor:git-partiel}
Let $x^0$ be such that at most one coincidence block is of null type, and that this block, if it exists, is a singleton $\{i_0\}$. Then the tangent space to $\CC^n/G(r,n)$ at the point $\pi(x^0)$ is spanned by the action of the derivations $\{\Delta_{H_\alpha}^d\}_{1\le d\le|H_\alpha|}$ for each nonzero block $H_\alpha$ (direct transfer of \cite[Corollary~37]{Ze}), and by the single derivation $\Delta_{i_0}$ of Theorem~\ref{thm:origine} if a null singleton block is present; each of these derivations satisfies the Leibniz rule at $x^0$ (Corollary~\ref{cor:leibniz-origine} for the last one).
\end{corollary}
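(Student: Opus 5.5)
The plan is to read the tangent space to $\CC^n/G(r,n)$ at $\pi(x^0)$ in the invariant-theoretic sense: derivations at $\pi(x^0)$ of the local ring of invariant germs, identified through Lemma~\ref{lem:Phi-holo} with germs of holomorphic functions $\Phi$ near $\pi(x^0)$ in the coordinates $U_1,\dots,U_n$. Since the quotient is smooth (Proposition~\ref{prop:invariants}), this space has dimension $n=\sum_\alpha|H_\alpha|$. So it suffices to carry out three steps.

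\emph{Step (a): well-definedness and the Leibniz rule.} Check that each listed operator is a well-defined point derivation on invariant germs at $x^0$ satisfying the Leibniz rule. For a nonzero block this comes from Proposition~\ref{prop:thm16delta} and Theorem~\ref{thm:16brut} applied to the sub-tuple $(x_i)_{i\in H_\alpha}$, with the other variables frozen at their generic values. For the null singleton it is Theorem~\ref{thm:origine} together with Corollary~\ref{cor:leibniz-origine}.

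\emph{Step (b): linear independence.} Show the $n$ derivations are independent by evaluating them on a convenient family of invariant germs and checking that the resulting $n\times n$ matrix is invertible. For a nonzero block, I would transfer Fact~\ref{fact:M} through $y=x^r$. Near $y^0=(x^0)^r$, an $S_n$-invariant $\eta$ is a function of the block coordinates $\hat u^{(|H_\alpha|)}_s(y_{H_\alpha})$. Lemma~\ref{lem:descent} (away from $x_i=0$) and the transfer principle (Proposition~\ref{prop:transfert}) then send $\partial^d_{H_\alpha}$ to $\Delta^d_{H_\alpha}$ acting on $\phi=\eta(x^r)$. The block-wise duality $\partial^d_{H_\alpha}\hat u_s=\delta_{d,s}$ therefore becomes $\Delta^d_{H_\alpha}\hat U^{(\alpha)}_s=\delta_{d,s}$ at $x^0$, where $\hat U^{(\alpha)}_s(x):=\hat u_s(x^r_{H_\alpha})$. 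For the null singleton I would take the test germ $x_{i_0}^r=y_{i_0}$, which is legitimate: it is invariant under the stabilizer $\mu_r$ of Theorem~\ref{thm:origine}, and near $x^0$ it is a holomorphic function of the invariants because the value $0$ is separated from all other $b_\alpha$. Theorem~\ref{thm:origine} then gives $\Delta_{i_0}(x_{i_0}^r)(x^0)=r!/r!=1$. Since distinct blocks involve disjoint variables and each derivation differentiates only in the variables of its own block, the test functions of one block are annihilated by the derivations of every other block. The evaluation matrix is then the identity.

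\emph{Step (c): conclusion.} We have $n$ independent derivations in an $n$-dimensional tangent space, so they span it.

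The main obstacle is step (b) at the null block. There the descent is not given by Lemma~\ref{lem:descent}, because $x^0\notin\Omega$, so I need to know that the block coordinates $(\hat U^{(\alpha)}_s)_{\alpha,s}$, together with $x_{i_0}^r$, form a genuine local holomorphic coordinate system on the quotient near $\pi(x^0)$, and not merely on $\Omega'$. My plan is to observe that these are polynomials in $y=x^r$, symmetric under the permutations preserving the blocks. Since the $b_\alpha$ are pairwise distinct, the local ring of $\mathrm{Sym}^n\CC$ at $y^0$ is the product of the local rings of the $\mathrm{Sym}^{|H_\alpha|}\CC$, as in Fact~\ref{fact:M}, and the block coordinates are local coordinates there. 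Then Lemma~\ref{lem:Phi-holo} pulls this statement back to the $x$-side, with $y_{i_0}=x_{i_0}^r$ serving as the coordinate of the null singleton factor.
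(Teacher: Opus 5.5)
Your proposal is correct, but it reaches the conclusion by a different mechanism than the paper. The paper never uses local test functions. It applies the $n$ derivations to the global invariants $U_1,\dots,U_n$ and identifies the entries with $\partial_{H_\alpha}^du_k(y^0)$ and $\partial_{y_{i_0}}u_k(y^0)$; for the singleton it does this via the expansion $U_k=c_0+c_1y_{i_0}+\cdots$ from the proof of Theorem~\ref{thm:origine}. It then gets invertibility because this matrix is the Jacobian of the transition map between Zemel's block chart of Fact~\ref{fact:M} (with $y_{i_0}$ for the singleton) and the global chart $u$.

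You instead evaluate on the block coordinates $\hat U^{(\alpha)}_s$ and on $x_{i_0}^r$. The matrix is then the identity, by block-wise duality and disjointness of variables, and you conclude from $\dim T_{\pi(x^0)}(\CC^n/G(r,n))=n$. Your route makes the matrix computation trivial. It also avoids both the identification $\Delta_{i_0}U_k(x^0)=\partial_{y_{i_0}}u_k(y^0)$ and the transition-Jacobian argument.

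The price is exactly the obstacle you flag. Your test functions are not globally $G(r,n)$-invariant, so you must show they are germs in the local ring of the quotient at $\pi(x^0)$, whereas the paper's $U_k$ are invariant for free. For your argument you need only this membership, not the full chart property, and your separation remark already supplies it:
\begin{itemize}
\item $y_{i_0}$ is a simple root of $\prod_i(t-y_i)=\sum_h(-1)^hE_ht^{n-h}$ near $0$, so it is holomorphic in $E_1,\dots,E_n$, hence in $U$, by the implicit function theorem.
\item Likewise, the cluster of roots near each $b_\alpha\ne0$ splits off as a coprime factor whose coefficients are holomorphic in the $E_h$.
\end{itemize}
The chart property then follows as a byproduct of the identity matrix.

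Two small precisions. First, the germ of $\mathrm{Sym}^n\CC$ at $[y^0]$ is the \emph{product of the germs} of the $\mathrm{Sym}^{|H_\alpha|}\CC$, so its local ring is their analytic tensor product, not their product. Second, in step (a), Proposition~\ref{prop:thm16delta} and Theorem~\ref{thm:16brut} only compute derivatives of $U_k$ along the diagonal. That $\Delta_{H_\alpha}^d$ is a well-defined point derivation obeying Leibniz is better justified through Fact~\ref{fact:M}, since it is the derivative of the representing function with respect to a block coordinate. The paper, however, leans on the same two citations when it defines $\Delta_{H_\alpha}^d$.
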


\begin{proof}
We work throughout in the local ring of the \emph{target} $Y:=\CC^n/G(r,n)$ at $\pi(x^0)$, not in that of the source $\CC^n_x$: by Proposition~\ref{prop:invariants}, $Y\cong\CC^n_U$ via $U=(U_1,\dots,U_n)$, so $Y$ is smooth and $T_{\pi(x^0)}Y=\mathrm{span}\{\partial/\partial U_1,\dots,\partial/\partial U_n\}$ tautologically, with dual basis $\{dU_1,\dots,dU_n\}$ of $T^*_{\pi(x^0)}Y$. By Lemma~\ref{lem:Phi-holo} (applied to $\phi$, on a small $G(r,n)$-stable neighborhood of the orbit of $x^0$), every $G(r,n)$-invariant $\phi$ holomorphic near $x^0$ writes $\phi=\Phi\circ U$ with $\Phi$ holomorphic near $\pi(x^0)$. It suffices to show that the $n\times n$ matrix with rows indexed by $(\alpha,d)$ (over the nonzero blocks) and by $i_0$ (if a null singleton block is present), columns indexed by $k=1,\dots,n$, and entries $\Delta_{H_\alpha}^dU_k(x^0)$ (resp.\ $\Delta_{i_0}U_k(x^0)$) is invertible: indeed $\Delta_{H_\alpha}^d\phi(x^0)=\sum_k\big(\Delta_{H_\alpha}^dU_k(x^0)\big)\Phi_{U_k}(\pi(x^0))$ by the chain rule (exactly as in Corollary~\ref{cor:cor12}'s proof, now justified at $x^0$ itself since $\Phi$ is holomorphic there by Lemma~\ref{lem:Phi-holo}), and likewise for $\Delta_{i_0}$; invertibility of this matrix is exactly the statement that $\big(\Delta_{H_\alpha}^d\phi(x^0)\big)_{\alpha,d}$ together with $\Delta_{i_0}\phi(x^0)$ determine, and are determined by, $d\Phi(\pi(x^0))=\sum_k\Phi_{U_k}(\pi(x^0))\,dU_k$, i.e., that the associated derivations form a basis of $T_{\pi(x^0)}Y$.

Set $y^0:=(x^0)^r$. Writing $\phi=\eta(x^r)$ near $x^0$ (Lemma~\ref{lem:descent} on the nonzero-type variables, Theorem~\ref{thm:origine} on the null one) and applying this to $\phi=U_k=u_k(x^r)$ itself, the rows of the matrix above are exactly $\partial_{H_\alpha}^du_k(y^0)$ (for a nonzero block, by the definition of $\Delta_{H_\alpha}^d$ as the transfer of Zemel's derivation, valid since $y=x^r$ restricts to a genuine local biholomorphism on the nonzero-type variables of $H_\alpha$) and $\partial_{y_{i_0}}u_k(y^0)$ (for the null singleton block, by Theorem~\ref{thm:origine}'s proof applied to $\phi=U_k$: $U_k=c_0+c_1y_{i_0}+c_2y_{i_0}^2+\cdots$ near $y_{i_0}^0=0$ gives $\Delta_{i_0}U_k(x^0)=c_1=\partial_{y_{i_0}}u_k(y^0)$). Note that it is $y_{i_0}=x_{i_0}^r$, \emph{not} $x_{i_0}$ itself, that plays the role of local coordinate here. This is a subtlety worth stressing, illustrated by the fact that $x\mapsto x^r$ has vanishing differential at $x=0$ for $r\ge2$, so that $x_{i_0}$ is \emph{not} a local coordinate transverse to the fiber at this ramification point, while $y_{i_0}$ is (Theorem~\ref{thm:origine} precisely computes the correct, $y_{i_0}$-based, derivative).

It remains to show that the $n\times n$ matrix with entries $\partial_{H_\alpha}^du_k(y^0)$, $\partial_{y_{i_0}}u_k(y^0)$ (rows) against $k=1,\dots,n$ (columns) is invertible. By Fact~\ref{fact:M} (\cite[Theorem~34]{Ze}, applied on the $y$-side, treating the null block exactly like any other block: the value $b_\alpha=0$ carries no special meaning there, since the $\mu_r$-ramification of $y=x^r$ is specific to the $x$-side and invisible on the $y$-side), the family $\big(\hat u_r^{(|H_\alpha|)}(y_{H_\alpha})\big)_{\alpha,r}$ together with $y_{i_0}$ itself (the $|H_{i_0}|=1$ case of Zemel's construction, trivial since a singleton block requires no symmetrization) is a genuine local holomorphic coordinate system for $\mathrm{Sym}^n\CC$ near $y^0$ (Zemel's own proof reduces each block recursively to the global fact $\mathrm{Sym}^m\CC\cong\CC^m$, Fact~\ref{fact:A}, applied with $n$ replaced by $|H_\alpha|$), dual to the derivations $\partial_{H_\alpha}^d$, $\partial_{y_{i_0}}$. Independently, $u=(u_1,\dots,u_n):\mathrm{Sym}^n\CC\to\CC^n_u$ is \emph{also} a global coordinate system (Fact~\ref{fact:A}, Fact~\ref{fact:I}: by Fact~\ref{fact:I}'s explicit formula, $u_k=e_k+P_k(e_1,\dots,e_{k-1})$ for a polynomial $P_k$ (the term $\lambda=(k)$ contributes $e_k$ with coefficient $1$, every other $\lambda\vdash k$ having $l(\lambda)\ge2$ and hence involving only $e_1,\dots,e_{k-1}$), so $u_1,\dots,u_n$ are obtained from the algebraically independent generators $e_1,\dots,e_n$ of $\CC[y]^{S_n}$, Fact~\ref{fact:A}, by an invertible triangular polynomial substitution, and are themselves algebraically independent generators). Two holomorphic coordinate systems on the same $n$-dimensional smooth variety, valid near the same point $y^0$, are related by a transition map that is itself a biholomorphism between open subsets of $\CC^n$ (an elementary fact requiring no separate verification beyond each family individually being a valid local chart), and such a transition map has, by definition, an invertible Jacobian at $y^0$. This Jacobian is exactly the matrix of entries $\partial_{H_\alpha}^du_k(y^0)$, $\partial_{y_{i_0}}u_k(y^0)$, which is therefore invertible, completing the proof.
\end{proof}

The case of a null block of cardinality $\ge2$ remains open, for the reason explained above: the tangent space there is presumably still spanned by derivations of order a multiple of $r$ satisfying the Leibniz rule, but neither the precise formula nor the linear independence is established in this note.

\section{Conclusion and perspectives}

We have shown that Zemel's theory \cite{Ze} for differentiating symmetric functions under $S_n$ transfers, via the substitution $y_i=x_i^r$, to the complex monomial reflection group $G(r,n)=\mu_r\wr S_n$. This gives, for $G(r,n)$, the existence and uniqueness of the dual coordinates $U_k$, a Weyl algebra structure localized at the discriminant of $G(r,n)$, and the behavior of these objects on the total diagonal and at partial coincidence points, all fully proved in this note. This transfer is not, however, purely formal. The Leibniz rule for the symmetrized operator $\mathcal D_d$ needs its own direct proof, independent of \cite{Ze}. The main theorem, by contrast, follows readily from the transfer principle itself, together with Zemel's own existence-and-uniqueness result for $S_n$; we also give a second, self-contained proof of it, for the triangular structure it reveals. Passing from $S_n$ to $G(r,n)$ moreover brings out, for $r\ge2$, a phenomenon with no analogue in \cite{Ze}: the degeneracy of the ordinary derivatives $\partial_{x_i}$, as opposed to the transferred operators $\Delta_i$, near the reflecting hyperplanes $x_i=0$, specific to the toric reflections of $G(r,n)$. The complete study of this degeneracy, meaning the structure formula, the closed form of its constants, the behavior on the $r^{n-1}$ lines of the total diagonal, and the resolution of the singularity at an isolated point $x_i=0$, all likewise fully proved, constitutes the contribution most specifically tied to the imprimitive structure of $G(r,n)$, with no counterpart in the $S_n$ case.

One phenomenon remains open, and only that one: the simultaneous coincidence of \emph{several} coordinates at the origin ($x_{i_1}=\cdots=x_{i_p}=0$, $p\ge2$). There, the usual coincidence of the $y_{i_l}=x_{i_l}^r$, resolved in \cite{Ze} by iterated L'Hôpital, is superimposed on the degeneracy specific to each $\Delta_{i_l}$, resolved here only for $p=1$ by Theorem~\ref{thm:origine}. We have precisely delimited, but not established, what this case would require: a formula combining the constants $\kappa_{d,j}$ of Proposition~\ref{prop:kappa-closed} with the coincidence derivatives of \cite{Ze}, together with a proof of linear independence for the resulting derivations on the tangent space to the GIT quotient. We leave this to future work. Beyond $G(r,n)$, it would also be natural to ask whether the transfer principle and the techniques developed here extend to other imprimitive complex reflection groups, in particular $G(r,p,n)$ for $p>1$, where the toric subgroup $\mu_r^n$ is replaced by its own index-$p$ subgroup.

\subsection*{Acknowledgements}

We warmly thank the two anonymous reviewers for their careful reading and their precious remarks and suggestions, which substantially improved this note. We also thank Shaul Zemel for kindly sharing a recent version of his manuscript \cite{Ze}.

\end{document}